\setlist[itemize]{leftmargin=*} 
   \def\MR#1{}
\theoremstyle{plain}
\newtheorem*{thm*}{Theorem}
\newtheorem{thm}{Theorem}
\Crefname{thm}{Theorem}{Theorems}
\numberwithin{thm}{section}
\newtheorem*{lem*}{Lemma}
\newtheorem{lem}[thm]{Lemma}
\Crefname{lem}{Lemma}{Lemmas}
\newtheorem*{claim*}{Claim}
\newtheorem{claim}[thm]{Claim}
\crefname{claim}{Claim}{Claims}
\Crefname{claim}{Claim}{Claims}
\newtheorem*{defn*}{Definition}
\Crefname{defn*}{Definition}{Definitions}
\newtheorem{prop}[thm]{Proposition}
\Crefname{prop}{Proposition}{Propositions}
\newtheorem{cor}[thm]{Corollary}
\crefname{cor}{Corollary}{Corollaries}
\crefname{conj}{Conjecture}{Conjectures}
\Crefname{qn}{Question}{Questions}
\newtheorem{obs}[thm]{Observation}
\Crefname{obs}{Observation}{Observations}
\Crefname{ex}{Example}{Examples}
\theoremstyle{definition}
\Crefname{prob}{Problem}{Problems}
\newtheorem{defn}[thm]{Definition}
\Crefname{defn}{Definition}{Definitions}
\theoremstyle{remark}
\newtheorem*{rem}{Remark}
\renewenvironment{proof}[1][]{\begin{trivlist}
\item[\hspace{\labelsep}{\bf\noindent Proof#1.\/}] }{\qed\end{trivlist}}
\newenvironment{altproof}[1][]{\begin{trivlist}
\item[\hspace{\labelsep}{\bf\noindent Alternative proof of #1.\/}] }{\qed\end{trivlist}}
\newcommand{\remove}[1]{}
\newcommand{\ceil}[1]{
    \left \lceil #1 \right \rceil
}
\newcommand{\floor}[1]{
    \left \lfloor #1 \right \rfloor
}
\newcommand{\eps}{\varepsilon}
\newcommand{\cp}[2]{$\left(#1,#2\right)$-covering property}
\newcommand{\pcp}[2]{$#1$-partite $#2$-covering property}
\newcommand{\pr}{\mathbb{P}}
\DeclareMathOperator{\tc}{tc}
\DeclareMathOperator{\h}{h}
\DeclareMathOperator{\hp}{hp}
\DeclareMathOperator{\ch}{ch}
\DeclareMathOperator{\Bin}{Bin}
\def\expandafter\normalsize\expandafter{%
    \normalsize
    \setlength\abovedisplayskip{4pt}
    \setlength\belowdisplayskip{4pt}
    \setlength\abovedisplayshortskip{4pt}
    \setlength\belowdisplayshortskip{4pt}
}
\newcommand{\subs}{\subseteq}
\newcommand{\Gnp}{\mathcal{G}(n,p)}
\newcommand{\Gtc}{G^{\tc}}
\newmdenv[
  topline=false,
  bottomline=false,
  skipabove=\topsep,
  skipbelow=\topsep
]{siderules}
\date{}
\begin{document}
\title{\vspace{-0.9cm}
 Covering graphs by monochromatic trees and Helly-type results for hypergraphs}

\author{Matija Buci\'c\thanks{Department of Mathematics, ETH, Z\"urich, Switzerland. Email: \href{mailto:matija.bucic@math.ethz.ch} {\nolinkurl{matija.bucic@math.ethz.ch}}.}\and
D\'aniel Kor\'andi\thanks{Institute of Mathematics, EPFL, Lausanne, Switzerland. Email:
\href{mailto:daniel.korandi@epfl.ch} {\nolinkurl{ daniel.korandi@epfl.ch}}. Research supported in part by SNSF grants 200020-162884 and 200021-175977.}\and 
Benny Sudakov\thanks{Department of Mathematics, ETH, Z\"urich, Switzerland. Email:
\href{mailto:benjamin.sudakov@math.ethz.ch} {\nolinkurl{benjamin.sudakov@math.ethz.ch}}.
Research supported in part by SNSF grant 200021-175573.}}

\maketitle
\vspace{-0.8cm}
\begin{abstract}
How many monochromatic paths, cycles or general trees does one need to cover all vertices of a given $r$-edge-coloured graph $G$?
These problems were introduced in the 1960s and were intensively studied by various researchers over the last 50 years. 
In this paper, we establish a connection between this problem and the following natural Helly-type question in hypergraphs.
Roughly speaking, this question asks for the maximum number of vertices needed to cover all the edges of a hypergraph $H$ if it is known that any collection of a few edges of $H$ has a small cover. We obtain quite accurate bounds for the hypergraph problem and use them to give some unexpected answers to several questions about covering graphs by monochromatic trees raised and studied by Bal and DeBiasio, Kohayakawa, Mota and Schacht, Lang and Lo, and Gir\~ao, Letzter and Sahasrabudhe.
\end{abstract}

\vspace{-0.5cm}
\section{Introduction}
\vspace{-0.1cm}
Given an $r$-edge-coloured graph $G$, how many monochromatic paths, cycles or general trees does one need to cover all vertices of $G$? The study of such problems has a very rich history going back to the 1960's when Gerencs\'er and Gy\'arf\'as \cite{gerencser67} showed that for any $2$-colouring of the edges of the complete graph, there are two monochromatic paths that cover all the vertices. Gy\'arf\'as \cite{gyarfas-covering-paths} later conjectured that the same is true for more colours, i.e.\ in any $r$-edge-colouring of $K_n$, there are $r$ monochromatic paths covering the vertex set. This conjecture was solved recently for $r=3$ by Pokrovskiy \cite{alexey}, but it is still open for all $r\ge 4$. The best known bound in general is that $O(r\log r)$ monochromatic paths suffice, and is due to Gy\'arf\'as, Ruszink\'o, S\'ark\"ozy and Szemer\'edi \cite{gyarfas08}, improving on results 
by Gy\'arf\'as \cite{gyarfas-covering-paths} and Erd\H{o}s, Gy\'arf\'as and Pyber \cite{pyber}. A similar type of question can be asked if we want to cover with cycles instead of paths. In fact, most results mentioned above also hold for disjoint cycles instead of paths. For further examples, generalisations and detailed history, we refer the reader to a recent survey by Gy\'arf\'as \cite{gyarfas-survey}.

We will study the problem of covering graphs using monochromatic connected components. Let us denote by $\tc_r(G)$ the minimum $m$ such that in any $r$-edge-colouring of $G$, there is a collection of $m$ monochromatic trees that cover the vertices of $G$. Since each connected graph contains a spanning tree, we may replace ``tree'' in this definition by ``connected subgraph'' or ``component'', which we do without further mention throughout the paper. The question of covering graphs with monochromatic components was first considered by Lov\'asz in 1975 \cite{lovasz-covers} and Ryser in 1970 \cite{henderson}, who conjectured that $\tc_r(K_n)=r-1$, or in other words, given any $r$-edge-colouring of $K_n$ we can cover its vertices using at most $r-1$ monochromatic components. It is easy to see that $\tc_r(K_n)\le r$ by fixing a vertex and taking the $r$ monochromatic components containing it in each of the colours. On the other hand, it is not hard \cite{bal18} to construct classes of graphs that only miss very few edges but admit no cover with a number of monochromatic components that is even bounded by a function of $r$.

Given a graph $G$ it is not clear how to determine $\tc_r(G)$ and in particular if it can be bounded by a function of $r$ only. In this paper we develop a framework which allows one to translate this question to a covering problem for hypergraphs. We illustrate the merits of this approach by obtaining answers to various well-studied problems in the area. The first set of these problems is about covering random graphs using monochromatic components.


\subsection{Covering random graphs}
A common theme in the combinatorics of recent years is to obtain sparse random analogues of extremal or Ramsey-type results. For some examples, see Conlon and Gowers \cite{conlon2016combinatorial} and Schacht \cite{schacht2016extremal} and references therein. With this in mind, Bal and DeBiasio \cite{bal18} initiated the study of covering random graphs by monochromatic components. Following this, Kor\'andi, Mousset, Nenadov, \v{S}kori\'c and Sudakov \cite{cycle-cover} and Lang and Lo \cite{lang} studied a version of this problem in which one uses cycles instead of components, and Bennett, DeBiasio, Dudek and English \cite{bennett2019large} looked at a related problem for random hypergraphs.

Here, we focus on the original problem of covering random graphs with monochromatic components considered by Bal and DeBiasio \cite{bal18}. They proved that the number of components needed becomes bounded when $p$ is somewhere between $\left(\frac{r\log n}{n}\right)^{1/r}$ and $\left(\frac{r\log n}{n}\right)^{1/(r+1)}$.
\begin{thm}[Bal, DeBiasio] \label{thm:baldebiasio}
Let $r$ be a positive integer. Then for $G\sim \Gnp$,
\begin{enumerate}[label=(\alph*)]
    \item \label{itm:bd-lb} if $p\ll \left(\frac{r\log n}{n}\right)^{1/r}$, then w.h.p.\ $\tc_r(G)\to\infty$, and
    \item \label{itm:bd-ub} if $p\gg \left(\frac{r\log n}{n}\right)^{1/(r+1)}$, then w.h.p.\ $\tc_r(G)\le r^2$.
\end{enumerate} 
\end{thm}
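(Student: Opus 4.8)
Both halves of the theorem rely on the interaction between monochromatic components and common neighbourhoods, but are proved by opposite-looking arguments: part (b) is a ``for all colourings'' statement proved by a greedy covering process, whereas part (a) is an ``exists a bad colouring'' statement, and the plan there is to manufacture one hard colouring out of a sparse independent set. Throughout, $G\sim\Gnp$.

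\textbf{Part (b).} The only property of $G$ I would use is $(\star)$: every $(r+1)$-set of vertices has a common neighbour. This holds w.h.p.: a fixed $(r+1)$-set has no common neighbour with probability $(1-p^{r+1})^{n-r-1}\le e^{-(1-o(1))np^{r+1}}$, and since $p\gg(r\log n/n)^{1/(r+1)}$ gives $np^{r+1}\gg r\log n$, the union bound $n^{r+1}e^{-(1-o(1))np^{r+1}}\to 0$. Now fix any $r$-colouring of a graph satisfying $(\star)$ and greedily choose vertices $v_0,v_1,\dots$: take $v_0$ arbitrary, and having chosen $v_0,\dots,v_{j-1}$, stop if the (at most $jr$) monochromatic components through them already cover $V$, otherwise let $v_j$ be an uncovered vertex. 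I claim the process stops within $r$ steps. If not, we have $v_0,\dots,v_r$, and by $(\star)$ some vertex $w$ is adjacent to all of them; among the $r+1$ edges $v_iw$ two share a colour $c$, say $v_iw$ and $v_jw$ with $i<j$; then $w$ lies in the colour-$c$ component of $v_i$, and so does $v_j$ via the colour-$c$ edge $v_jw$ --- but this component was among those considered when $v_j$ was chosen, contradicting that $v_j$ was uncovered. Hence at most $r$ vertices and $r\cdot r=r^2$ components suffice, i.e.\ $\tc_r(G)\le r^2$.

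\textbf{Part (a).} Call a set $I\subseteq V(G)$ \emph{$(r-1)$-sparse} if $I$ is independent in $G$ and every vertex has at most $r-1$ neighbours in $I$. The key reduction is
\[
\tc_r(G)\ \ge\ \max\{\,|I|\ :\ I\ \text{is}\ (r-1)\text{-sparse}\,\}.
\]
Given such an $I$, colour $E(G)$ as follows: every edge meeting $I$ in no vertex gets colour $r$; an edge $vw$ with $v\in I$ (hence $w\notin I$, as $I$ is independent) gets colour $k$ if $v$ is the $k$-th smallest element of $N(w)\cap I$, so $k\le r-1$. For $v\in I$ and any colour $c$ I claim the colour-$c$ component of $v$ lies inside $\{v\}\cup N(v)$: a monochromatic walk leaving $v$ must first go to some $w\notin I$ for which $v$ is the ``$c$-th'' special neighbour, and it cannot be continued, since the next edge either returns to $I$ --- forcing a second vertex to also be the ``$c$-th'' special neighbour of $w$, which is impossible --- or is incident to no vertex of $I$, in which case it has colour $r\neq c$ (recall $c\le r-1$ here; note $v$ has no colour-$r$ edge). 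Consequently every monochromatic component contains at most one vertex of $I$, so covering $V$ needs at least $|I|$ of them, proving the displayed bound.

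It therefore suffices to show that w.h.p.\ $G$ has an $(r-1)$-sparse set of size $s$ for every fixed $s$; this yields $\tc_r(G)\to\infty$. Equivalently, letting $H$ be the hypergraph on $V$ whose edges are the pairs of $E(G)$ together with all $r$-subsets of the neighbourhoods $N(w)$, $w\in V$, I want $\alpha(H)\ge s$ w.h.p. The expected number of independent $s$-sets of $H$ is $\binom ns(1-p)^{\binom s2}\bigl(1-(1+o(1))\binom sr p^r\bigr)^{n-s}$, whose logarithm equals $s\log n-O(s\log s)-O(ps^2)-O(ns^rp^r)$; since $s$ is fixed and $p\ll(r\log n/n)^{1/r}$ gives $ns^rp^r=s^r\cdot np^r=o(\log n)$, this tends to $+\infty$, and a standard second moment computation (routine because $s$ is bounded) then gives $\alpha(H)\ge s$ w.h.p. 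I expect this last estimate to be the real work: part (b) is short and clean, but in part (a) the threshold is delicate --- choosing $I$ by a naive random inclusion and pruning only reaches $p\ll n^{-1/r}$, and recovering the extra $(\log n)^{1/r}$ factor is precisely what forces one to count $(r-1)$-sparse sets of a \emph{prescribed} size and to control the $r$-uniform part of $H$ carefully, so that first/second moment bookkeeping is the technical heart of the argument.
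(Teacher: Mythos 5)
Your argument is correct, and it reconstructs essentially the same proof the paper attributes to Bal and DeBiasio (\Cref{thm:baldebiasio} is cited, not reproved here). For part \ref{itm:bd-ub}, your greedy process is exactly the sketch the paper gives at the start of \Cref{sec:direct-cover}: stopping within $r$ steps is equivalent to showing $\alpha(\Gtc)\le r$ via the common-neighbour property, and then applying $\tc_r(G)\le r\alpha(\Gtc)$ (\Cref{prop:indepbound}). For part \ref{itm:bd-lb}, your $(r-1)$-sparse set is precisely the object delivered by the paper's cited \Cref{lem:random-for-lb} with $k=r$ (an independent set in which no $r$ vertices have a common neighbour); your explicit colouring of the edges incident to $I$ correctly shows that every monochromatic component meets $I$ at most once, and your first/second-moment computation correctly substitutes for citing that lemma (just note in the second moment that disjoint $s$-sets share the $O(s^2)$ edges between them, which costs only a $1+o(1)$ factor since $p\to 0$).
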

They also made the conjecture that w.h.p.\ $\tc_r(\Gnp) \le r$ when $p\gg \left(\frac{r\log n}{n}\right)^{1/r}$. This was subsequently proved by Kohayakawa, Mota and Schacht \cite{mota} for $r=2$.\footnote{In fact they solve the partitioning version of this problem.} On the other hand, \cite{mota} presents a far from trivial construction, due to Ebsen, Mota, and Schnitzer, showing that $\tc_r(\Gnp)\ge r+1$ for $p \ll \left(\frac{ r\log n}{n}\right)^{1/(r+1)}$, which disproves the conjecture for $r \ge 3$. 
Since this example forces just one additional component and only applies for slightly larger values of edge probability, it was still generally believed that the conjecture is close to being true. 
In fact, Kohayakawa, Mota and Schacht ask if $r$ components are enough to cover $\Gnp$ when $p$ is slightly larger than $\left(\frac{ r\log n}{n}\right)^{1/(r+1)}$.



We show that the answer to the above question is quite different from what was expected. We also obtain a good understanding of the behaviour of $\tc_r(\Gnp)$ throughout the probability range.
In particular, we find that $\tc_r(G)$ only becomes equal to $r$ when the density is exponentially larger than conjectured.
\begin{thm}\label{thm:trees-end}
Let $r$ be a positive integer. There are constants $c,C$ such that for $G \sim \Gnp$,
\begin{enumerate}[label=(\alph*)]
    \item \label{itm:tree-lb} if $p<\left(\frac{c \log n}{n}\right)^{\sqrt{r}/2^{r-2}}$, then w.h.p.\ $\tc_r(G)>r$, and
    \item \label{itm:tree-ub} if $p>\left(\frac{C \log n}{n}\right)^{1/2^{r}}$, then w.h.p.\ $\tc_r(G) \le r$.
\end{enumerate} 
\end{thm}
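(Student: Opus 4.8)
The plan is to route the problem through a Helly-type covering question for hypergraphs, and then run the probabilistic analysis separately on the graph side and the hypergraph side.

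\emph{The reduction.} Given an $r$-edge-colouring of a graph $G$, let $\mathcal H$ be the hypergraph on vertex set $V(G)$ whose edges are the monochromatic components. A family of monochromatic trees covering $V(G)$ is the same as a set of edges of $\mathcal H$ covering all vertices, so $\tc_r(G)$ equals the transversal number $\tau(\mathcal H^{*})$ of the dual hypergraph $\mathcal H^{*}$, whose vertices are the monochromatic components and which has, for each $v\in V(G)$, an edge consisting of the monochromatic components through $v$; in particular $\mathcal H^{*}$ has rank at most $r$. The point is that connectivity of $G$ constrains the intersection pattern of $\mathcal H^{*}$: if a set $S\subseteq V(G)$ is such that in \emph{every} $r$-colouring its vertices together lie in few monochromatic components, then the corresponding few edges of $\mathcal H^{*}$ have a small transversal, i.e.\ $\mathcal H^{*}$ satisfies a \emph{covering property}. (For $G=K_n$ this is just the fact that $\mathcal H^{*}$ is intersecting, which already gives $\tau(\mathcal H^{*})\le r$ by taking any single edge, recovering $\tc_r(K_n)\le r$.) So the first task is the extremal question: how large can $\tau(H)$ be if $H$ has bounded rank and every few edges of $H$ are coverable by few vertices? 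I would prove matching upper and lower bounds for this quantity; for \Cref{thm:trees-end} one needs only that, at the parameters arising from $\Gnp$, this upper bound equals exactly $r$, and that the extremal (lower-bound) construction can be realised inside a random graph.

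\emph{Part \ref{itm:tree-ub}.} The probabilistic input is a robust monochromatic-connectivity lemma: if $p>\left(\frac{C\log n}{n}\right)^{1/2^{r}}$ then \whp, in every $r$-colouring of $\Gnp$, any boundedly many vertices lie together in at most $r$ monochromatic components. The exponent $1/2^{r}$ reflects the depth of the recursion used to build the connecting structure: by a Gy\'arf\'as-type bound some colour has a monochromatic component on a constant fraction of the vertices, one recurses on the complement through the remaining colours, and hooking a target vertex onto the resulting skeleton costs a connecting path whose length roughly doubles at each of the $r$ steps, so length about $2^{r}$ --- and $np^{2^{r}}\gg\log n$ is exactly what makes such paths exist simultaneously for all the boundedly many relevant configurations (a second-moment estimate together with a union bound, the adversarial colouring being what makes this step delicate). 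Feeding this into the hypergraph theorem, $\mathcal H^{*}$ has rank $\le r$ and satisfies the covering property with parameters for which the extremal bound is $r$, hence $\tc_r(\Gnp)=\tau(\mathcal H^{*})\le r$.

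\emph{Part \ref{itm:tree-lb}.} Here I would exhibit, for each $r$ and each $p<\left(\frac{c\log n}{n}\right)^{\sqrt r/2^{r-2}}$, an explicit $r$-colouring of $\Gnp$ (valid \whp) with no cover by $r$ monochromatic components --- equivalently, one whose dual hypergraph has $\tau>r$, realising the hypergraph lower-bound construction inside $\Gnp$. The colouring is recursive in $r$: partition $V(\Gnp)$ into many blocks arranged as the points of an affine-plane-like design, colour the edges between two blocks by the "direction" of the line joining them, and recursively colour the larger "super-blocks" cut out by parallel classes using the remaining colours, arranged so that every monochromatic component is confined to a union of blocks lying along a flat of the design; then $r$ monochromatic components cannot cover all the blocks, by a design-packing estimate. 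The sparsity of $\Gnp$ is used twice: with $p$ below the stated threshold many pairs of blocks carry no edges, so the "forbidden" monochromatic connections genuinely do not occur and the design constraints survive; while the blocks are chosen just large enough that the inter-block bipartite graphs one does need are robustly connected. Propagating these two competing thresholds through the $r$ recursion steps --- each step roughly squaring the number of blocks, the $\sqrt r$ arising from optimising how finely to subdivide at each level --- produces the exponent $\sqrt r/2^{r-2}$.

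\emph{The main obstacle} is the extremal/Helly-type theorem for hypergraph covering, which drives both directions. Granting it, the remaining difficulties are: establishing the robust monochromatic-connectivity lemma with the sharp $2^{r}$ dependence \emph{uniformly over all colourings}; and, for the lower bound, the recursive design-based construction together with the verification that \emph{no} choice of $r$ monochromatic components covers $V(G)$ --- i.e.\ simultaneously controlling all unions of monochromatic components, which forces one to rule out all "shortcut" connections and is the most delicate part.
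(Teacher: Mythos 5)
Your high-level framework matches the paper's: reduce $\tc_r(G)$ to the transversal number of the auxiliary $r$-partite $r$-graph whose vertices are monochromatic components, translate properties of $G$ into covering properties of that hypergraph, and then solve the extremal hypergraph problem. This is precisely the reduction via $H(G,c)$ and the parameter $\hp_r(k)$ in \Cref{sec:connection,sec:r-partite}. However, the concrete mechanisms you propose for both parts diverge from the paper and, as stated, would not produce the claimed thresholds.

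For part \ref{itm:tree-ub}, your heuristic for the $2^r$ exponent --- a Gy\'arf\'as-type large-component recursion with connecting paths whose length doubles at each step --- is not the argument, and the threshold $np^{2^r}\gg\log n$ does not correspond to the existence of long paths (constant-length paths appear already at $p=\Theta(1/n)$). In the paper, $np^{k}\gg\log n$ is the threshold for \emph{every $k$ vertices to have a common neighbour} (\Cref{cor:common-nbors}). This common-neighbour property implies that the auxiliary hypergraph has the $r$-partite $k$-covering property (\Cref{lem:common-nbors-relation}), and the reason $k=2^r$ suffices is a purely extremal fact: by Alon's cross-intersecting set-pairs inequality (\Cref{thm:alon-set-pairs} with each part contributing $\binom{1+1}{1}=2$), any critical $r$-partite $r$-graph without a transversal cover has at most $2^r$ edges, so $\hp_r(2^r)=r$ (\Cref{thm:ubend-partite}). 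Your proposed route contains no analogue of the set-pairs step and therefore has no way to single out $2^r$.

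For part \ref{itm:tree-lb}, the affine-plane-like recursive colouring you sketch is not what the paper does, and your claim that "each step roughly squaring the number of blocks" produces the exponent $\sqrt{r}/2^{r-2}$ is not substantiated. The actual exponent comes from the asymptotics of a central binomial coefficient: the paper constructs an explicit $r$-partite hypergraph $H_{r,t,m}$ (plus a two-copy variant in \Cref{lem:lb-2-copies}) showing $\hp_{r-1}(k)>r-1$ for all $k<\binom{r-1}{\lfloor r/2\rfloor}+\binom{r-1}{\lceil r/2\rceil}$, and this quantity is $\ge 2^{r-2}/\sqrt r$. The transfer from the hypergraph example to a colouring of a sparse random graph is done by the general embedding lemma (\Cref{lem:lower-bound-equivalence}), using the fact that for small $p$, $\Gnp$ whp contains a large independent set in which no $k+1$ vertices share a common neighbour (\Cref{lem:random-for-lb}). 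So the "design" that realises the lower bound is $H_{r,t,m}$, not a geometric affine plane; and the $\sqrt r$ is a binomial estimate, not an optimisation over a recursive subdivision. Without these two concrete ingredients --- the set-pairs upper bound for $\hp_r$ and the $H_{r,t,m}$/two-copy lower bound for $\hp_{r-1}$ --- your proposal does not establish either direction of the theorem.
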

It is easy to see that $\tc_r(G) \ge r$ holds whenever $\alpha(G)\ge r$ (see e.g.\ \cite{bal18}). So the second part of the theorem actually implies that $\tc_r(\Gnp)=r$ for all larger values of $p$, so long as $\alpha(\Gnp) \ge r$.

Moreover, we show that near the threshold where $\tc_r(G)$ becomes bounded (and for quite some time after that), its value is not linear, as had been conjectured, but is of order $\Theta(r^2)$.

\begin{thm}\label{thm:trees-start}
Let $r$ be a positive integer, $d>1$ a constant and $G=\Gnp$. There are constants $c,C$ such that if $\left(\frac{C \log n}{n}\right)^{\frac{1}{r}} < p < \left(\frac{c \log n}{n}\right)^{\frac{1}{d(r+1)}}$ then w.h.p.\ $\tc_r(G)=\Theta(r^2)$ (the asymptotics depending on $r$ only).
\end{thm}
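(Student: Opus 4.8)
The plan is to route everything through the hypergraph reformulation developed earlier in the paper. Fix an $r$-edge-colouring $\chi$ of $G$ and let $H=H(G,\chi)$ be the hypergraph whose vertices are the monochromatic components of $(G,\chi)$ and which has, for each $v\in V(G)$, an edge $e_v$ consisting of the $r$ monochromatic components through $v$ (one per colour). A collection of monochromatic components covers $V(G)$ precisely when it is a transversal of $H$, so $\tc_r(G,\chi)=\tau(H)$ and $\tc_r(G)=\max_\chi\tau\bigl(H(G,\chi)\bigr)$. Hence the statement splits into an upper bound $\tau(H)\le Cr^2$ valid for \emph{every} colouring $\chi$, and the construction of a single $\chi$ with $\tau(H)\ge cr^2$.

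For the upper bound I would first note that $p>(C\log n/n)^{1/r}$ forces $np^r\gg r\log n$, which is precisely the range in which $G\sim\Gnp$ \whp\ acquires the property that every $r$-set of vertices has a common neighbour; this is why the exponent $1/r$ is the correct one. The next (and more technical) step is to show that this property of $G$ forces, for an arbitrary colouring, the small-cover hypothesis required by our Helly-type bound for hypergraphs: a common neighbour $w$ of a chosen $r$-set drops each of those vertices into one of the $r$ monochromatic components at $w$, and by iterating this and controlling which colours at the common neighbours are actually used one obtains a bound of the needed form on the size of a cover of any small sub-family of edges of $H$. Plugging this into the hypergraph theorem yields $\tau(H)=O(r^2)$ for every $\chi$, and hence $\tc_r(G)=O(r^2)$ throughout the range; note that the trivial bound $\tc_r(G)\ge r$ from $\alpha(G)\ge r$ is far weaker than what we are claiming, so the real content of the theorem is the lower bound.

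For the lower bound I would take the extremal example witnessing tightness of the Helly-type hypergraph bound — an $r$-uniform hypergraph $H^{*}$ that still satisfies the small-cover hypothesis but has $\tau(H^{*})=\Omega(r^2)$ — and realise it as the component hypergraph of a colouring of $G$. Concretely: pick a set $W\subseteq V(G)$ with $|W|=\Theta(r^2)$, partition $W$ into the prospective monochromatic components prescribed by $H^{*}$ (organised into $\Theta(r)$ ``blocks'' of $\Theta(r)$ components, in the spirit of the Ebsen--Mota--Schnitzer construction), and colour $E(G)$ so that (i) each prospective component induces a connected subgraph in its colour, and (ii) no two prospective components merge. Requirement (ii) forces certain pairs of vertices of $W$ to be non-adjacent in $G$, and this is exactly where $p<(c\log n/n)^{1/(d(r+1))}$ enters: it guarantees, \whp\ and via a first-moment/alteration argument over the $\binom{n}{\Theta(r^2)}$ candidate sets, that a configuration of $\Theta(r^2)$ vertices with all the prescribed non-edges exists, the constant $d>1$ providing the slack needed to reach the stated density rather than only the threshold. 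The vertices of $G$ outside $W$ are absorbed into already-present components, so passing from $H^{*}$ to the full component hypergraph of the colouring only adds edges and cannot decrease $\tau$; thus $\tau(H)\ge\tau(H^{*})=\Omega(r^2)$.

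The main obstacle is the lower bound, specifically the tension between the three demands on $H^{*}$: it must simultaneously have $\tau=\Omega(r^2)$, satisfy the small-cover hypothesis (so it is not already excluded by the upper-bound argument), and be \emph{faithfully} embeddable in $\Gnp$ — meaning the prescribed components really are the monochromatic components of the colouring, which requires both enough low-adjacency vertices to host the $\Theta(r^2)$ nearly-independent pieces and a verification that the many edges of $G$ incident to $W$ do not create unwanted monochromatic connections. Checking that this embedding survives \whp\ for all $p$ up to $(c\log n/n)^{1/(d(r+1))}$, not merely just above $(C\log n/n)^{1/r}$, is the delicate quantitative point, and is what the parameter $d$ in the statement absorbs.
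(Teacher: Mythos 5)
Your proposal reaches the right reformulation (via the component hypergraph $H(G,\chi)$) but it misidentifies which parts of the argument can actually be carried out through it, and both the upper and lower bound sketches have real gaps.

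\textbf{Upper bound.} You claim the upper bound $\tau(H)=O(r^2)$ can be obtained by verifying the partite covering hypothesis via common neighbours of $r$-sets. This fails: $p>(C\log n/n)^{1/r}$ guarantees, w.h.p., only that every $r$-set of vertices has a common neighbour, which by \Cref{lem:common-nbors-relation} yields precisely the $r$-partite \emph{$r$}-covering property for $H$. But $\hp_r(r)=\infty$ (\Cref{obs:trivial-partite}\ref{itm:(1p)}), so this is vacuous; the covering property for any $k>r$ is what one would need, and that is exactly what the probability range does not grant. Your ``by iterating this and controlling which colours at the common neighbours are used'' is a gesture at filling this gap, but it is not an argument, and in fact it is not the route the paper takes. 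The paper explicitly bypasses the hypergraph framework here: \Cref{thm:cascadebound} uses a bootstrapping argument on sets that are ``rainbow'' to prescribed $r$-tuples in an independent set of the transitive closure multigraph $\Gtc$, together with \Cref{lem:common-nbors,lem:edge-between-sets}, to show $\alpha(\Gtc)\le 3r-2$, and then \Cref{prop:indepbound} gives $\tc_r(G)\le (3r-2)r$. This is a genuinely different mechanism from certifying a covering property of $H$ and then applying a Helly-type bound.

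\textbf{Lower bound.} Your plan to pick a set $W$ of size $\Theta(r^2)$, partition it into prospective components, and colour so they neither disconnect nor merge, is much closer in spirit to the Ebsen--Mota--Schnitzer example than to what is actually done here, and it does not close the hard points you yourself flag. Concretely: the paper does not embed the vertex set of $H^*$ into $G$; instead it takes a \emph{critical} $(r-1)$-partite $(r-1)$-graph $H$ with $\tau(H)=\hp_{r-1}(k)$, passes to the auxiliary $(r-1)$-coloured multigraph $\mathcal{A}(H)$ whose vertices are the \emph{edges} of $H$ (a bounded number, at most $\binom{r+r^2}{r}$, not $\Theta(r^2)$), and identifies those with an independent set $S\subseteq V(G)$ in which no $k+1$ vertices have a common neighbour (\Cref{lem:random-for-lb}). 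This last co-degree condition, which is stronger than mere independence of $S$ and is precisely where $p<(c\log n/n)^{1/(d(r+1))}$ enters, is what guarantees that each $v\in T=V\setminus S$ sends at most $k$ edges into $S$; the \pcp{(r-1)}{k} of $H$ is then used to split those at most $k$ edges consistently into colour classes. Crucially, one colour (colour $r$) is reserved for all edges inside $T$, which both forces one extra component and prevents the edges incident to $T$ from merging prescribed pieces in an uncontrolled way. Your construction uses all $r$ colours inside $W$ and then says the outside vertices ``are absorbed into already-present components''; without the reserved colour and without the co-degree control you have no handle on the merges you acknowledge as the delicate point, so the claim that $\tau$ only goes up when passing from $H^*$ to the full component hypergraph is unjustified.

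\end{document}
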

Note that \Cref{thm:baldebiasio,thm:trees-start} together establish a threshold of $\left(\frac{\log n}{n}\right)^{1/r}$ for the property of having $\tc_r$ bounded by a function of $r$. A slightly weaker understanding of this threshold also follows from the result of Kor\'andi et al.\ \cite{cycle-cover} that an $r$-edge-coloured $\Gnp$ can be covered with $O(r^8\log r)$ monochromatic cycles whenever $p>n^{-1/r+\eps}$. In fact, our upper bound on $\tc_r(G)$ in this regime borrows some ideas from \cite{cycle-cover}.

The lower bound in \Cref{thm:trees-start} answers a question of Lang and Lo \cite{lang}, who considered the problem of partitioning $\Gnp$ into cycles, and ask if $o(r^2)$ monochromatic cycles are enough for $p=\Omega\left(\left(\frac{\log n}{n}\right)^{1/r}\right)$. Our result shows that not only is the answer no, it is not even possible for larger values of $p$, even if we only need to \emph{cover} the vertices and are allowed to use trees instead of cycles.

The above two theorems describe the value of $\tc_r(\Gnp)$ when $p$ is quite small or quite large. We also obtain the following theorem, which tracks the behaviour of $\tc_r(\Gnp)$ in the range between.

\begin{thm}\label{thm:trees-middle}
Let $k>r\ge 2$ be integers, there exist constants $c,C$ such that given $G \sim \Gnp$ if $\left(\frac{C \log n}{n}\right)^{1/k}<p<\left(\frac{c \log n}{n}\right)^{1/(k+1)}$ then w.h.p.\ 
$\frac{r^2}{20\log k} \le \tc_r(G) \le \frac{16r^2 \log r}{\log k}$.
\end{thm}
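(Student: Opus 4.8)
The plan is to pass, via the paper's hypergraph framework, from $\tc_r$ to a transversal number, prove the upper bound with the Helly-type estimate and the lower bound with a blow-up construction. An $r$-edge-colouring $\chi$ of $G$ produces an $r$-partite auxiliary hypergraph $H_\chi$: the vertices of part $i$ are the colour-$i$ components, and for every $v\in V(G)$ there is an edge $e_v$ formed by the $r$ components (one per colour) containing $v$. A set of monochromatic components covers $V(G)$ exactly when the corresponding set of vertices of $H_\chi$ is a transversal, so $\tc_r(G)=\max_\chi\tau(H_\chi)$. Hence it is enough to bound $\tau(H_\chi)$ for every $\chi$ (for the upper bound) and to find one $\chi$ with $\tau(H_\chi)$ large (for the lower bound).

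\emph{Upper bound.} When $p>\left(\frac{C\log n}{n}\right)^{1/k}$ with $C$ large, a routine union bound shows that w.h.p.\ every $k$ vertices of $G$ have a common neighbour. Given this, for any $\chi$ the hypergraph $H_\chi$ has the covering property that every $k$ of its edges are covered by $r$ vertices: if $w$ is a common neighbour of $v_1,\dots,v_k$, then for each $j$ the vertices $w$ and $v_j$ lie in the same colour-$\chi(wv_j)$ component, so the $r$ components through $w$ form a transversal of $\{e_{v_1},\dots,e_{v_k}\}$. As $H_\chi$ is $r$-partite and satisfies this covering property, the paper's Helly-type upper bound for hypergraphs gives $\tau(H_\chi)\le \frac{16r^2\log r}{\log k}$, whence $\tc_r(G)\le\frac{16r^2\log r}{\log k}$ w.h.p.

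\emph{Lower bound.} Take a near-extremal hypergraph $H^\ast$ for the Helly problem: an $r$-partite hypergraph in which every $k$ edges are covered by $r$ vertices but $\tau(H^\ast)\gtrsim \frac{r^2}{\log k}$, as supplied by the paper's construction. The aim is to realise a blow-up of $H^\ast$ as $H_\chi$ for a colouring $\chi$ of $G=\Gnp$. One natural attempt: split $V(G)$ into blocks indexed by the edges of $H^\ast$, declare the intended colour-$i$ component of a vertex $x$ in part $i$ to be the union of the blocks of all edges of $H^\ast$ through $x$, and colour each edge $uv\in E(G)$ by some colour $i$ for which $u$ and $v$ already lie in a common intended component. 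Then, apart from the effect of edges that cannot be coloured this way, the component structure is a blow-up of $H^\ast$, so $\tau(H_\chi)$ is essentially $\tau(H^\ast)\gtrsim \frac{r^2}{\log k}$, giving $\tc_r(G)\ge\frac{r^2}{20\log k}$ after tracking constants. The role of the hypothesis $p<\left(\frac{c\log n}{n}\right)^{1/(k+1)}$ is that, $c$ being small, w.h.p.\ no $k+1$ vertices of $G$ have a common neighbour; this sparsity is what should let one choose the blocks, the size of $H^\ast$ and the colour assignment so that the edges of $G$ running between blocks of disjoint edges of $H^\ast$ --- which are exactly the edges forced to merge two intended components --- are few and can be balanced across the $r$ colours, so the resulting merges stay local and do not bring $\tau(H_\chi)$ below $\frac{r^2}{20\log k}$.

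The hard part is this last point: since $p$ is only polynomially small, the blocks of $V(G)$ are not independent sets, so there really are many edges of $G$ between blocks of disjoint edges of $H^\ast$, and each of them forces a merge of two intended components. One has to tune the block sizes, the number of edges of $H^\ast$, and the distribution of the mis-coloured edges among the colours so that within each colour class the ``merge graph'' on the intended components has only small components (of size roughly $\log k$); this is where the exponent $1/(k+1)$ and the $\log k$ factor enter. Verifying, in addition, that each intended component is connected in its own colour (so that it is a single component and not several) is also needed. By contrast, the union-bound facts about common neighbours in $\Gnp$, and the bookkeeping that turns the hypergraph bounds into $\frac{r^2}{20\log k}$ and $\frac{16r^2\log r}{\log k}$, are routine.
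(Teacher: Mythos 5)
Your upper bound is essentially the paper's argument: the common-neighbour property of $k$-sets translates to the $r$-partite $k$-covering property for the auxiliary hypergraph $H_\chi$ (this is \Cref{lem:common-nbors-relation}), and the Helly-type bound $\hp_r(k)\le \frac{16r^2\log r}{\log k}$ (\Cref{thm:ubound-partite-simple}) finishes it. That part is fine.

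The lower bound has a genuine gap, and you have in fact pointed to it yourself. If the blocks of your blow-up have size growing with $n$, then for $p>(\log n/n)^{1/k}$ the random graph has far too many edges between blocks corresponding to disjoint edges of $H^\ast$; these force merges of intended components, and ``tuning block sizes and balancing merges across colours'' is not an argument --- there is no reason the merge graphs stay small, and in fact any two intended components of the same colour are w.h.p.\ joined by an edge once blocks have polynomial size. If instead the blocks are single vertices, you are back to the question of how to colour the vast majority of edges of $G$ that are not incident to your finite gadget, and your sketch says nothing about them. Note also that you aim for $\tau(H_\chi)\approx\tau(H^\ast)$ with $H^\ast$ an $r$-partite extremal example, i.e.\ $\tc_r(G)\gtrsim\hp_r(k)$, which is strictly stronger than what the paper shows.

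The paper's construction (\Cref{lem:lower-bound-equivalence}, applied with $r-1$ in place of $r$) sidesteps these problems with three ideas you are missing. First, it uses one \emph{extra} colour: all edges inside $T=V(G)\setminus S$ are given colour $r$, which instantly disposes of the unwanted edges and also provides, via a vertex $w\in T$ with no neighbour in $S$, one component forced to be disjoint from $S$. Second, to afford this extra colour it works with an $(r-1)$-partite $(r-1)$-graph $H$, so the lower bound is $\tc_r(G)\ge\hp_{r-1}(k)+1$; combining with \Cref{thm:lbound-partite-general} gives $\frac{(r-1)^2}{12\log k}+1\ge\frac{r^2}{20\log k}$ for $r$ large (small $r$ being trivial). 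Third, the gadget lives on a \emph{constant}-size independent set $S$ in which no $k+1$ vertices share a neighbour --- such an $S$ exists w.h.p.\ for $p<(c\log n/n)^{1/(k+1)}$ by \Cref{lem:random-for-lb} --- and the vertices of $S$ are identified with the edges of a critical $H$ via the auxiliary coloured multigraph $\mathcal A(H)$; each $v\in T$ then sends at most $k$ edges to $S$, and these are coloured according to a transversal cover of the corresponding $k$ edges of $H$, using only the colours $1,\dots,r-1$. So the role of the exponent $1/(k+1)$ is not to control ``merge graphs'' but simply to guarantee the weak-codegree independent set $S$; no blow-up analysis is needed.
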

In fact, we obtain slightly better bounds when $k$ approaches either extreme (i.e.\ when $k$ is linear or exponential in $r$). This connects the bounds of this theorem to the ones in \Cref{thm:trees-start,thm:trees-end}. For example, we obtain that if $k$ is exponential in $r$ then $\tc_r(G)=\Theta(r)$.


\vspace{-0.2cm}
\subsection{The connection to covering partite hypergraphs.}\label{subsec:connec}
\vspace{-0.1cm}
As mentioned before, our proofs of the above results rely on an interesting connection to a natural problem about hypergraph covers. Loosely speaking, the question asks how big a cover of a hypergraph $H$ can be if any subgraph of $H$ with few edges has a small cover. Here by a (vertex) cover of a hypergraph $H$, we mean a set of vertices that has a non-empty intersection with all edges of $H$. The minimum size of such a cover is called the cover number of $H$, and is denoted by $\tau(H)$.

In our case, we consider a variant for $r$-partite $r$-graphs ($r$-uniform hypergraphs). A \emph{transversal cover} is then defined as a cover containing exactly one vertex in each part of the $r$-partition.
\begin{defn*}
We say that an $r$-partite $r$-graph $H$ has the \emph{\pcp{r}{k}} if any subgraph of $H$ with at most $k$ edges has a transversal cover.
\end{defn*}
We define $\hp_r(k)$ to be the largest possible cover number of an $r$-partite hypergraph
satisfying the \pcp{r}{k} if such a maximum exists, and set $\hp_r(k)= \infty$ otherwise. The connection between the two problems is particularly striking in the case of random graphs where we get the following result:
\begin{thm}\label{thm:equivalence}
Let $k>r\ge 2$ be integers, and let $G\sim \Gnp$. There are constants $C,c>0$ such that:
\begin{enumerate}
    \item \label{itm:ub} If $np^k>C\log n$ then w.h.p.\ $\tc_r(G) \le \hp_r(k)$.
    \item \label{itm:lb} If $np^{k+1}<c \log n$ then w.h.p.\ $\tc_{r+1}(G) \ge \hp_{r}(k)+1$.
\end{enumerate}
\end{thm}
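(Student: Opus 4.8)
The plan is to route both parts through a ``pattern hypergraph'' reformulation of $\tc$. Fix a graph $G$ and an $r$-edge-colouring $\chi$; for $v\in V(G)$ and a colour $i$ write $C_i^\chi(v)$ for the monochromatic component of colour $i$ containing $v$ (a single vertex if $v$ meets no colour-$i$ edge). Let $H_\chi$ be the $r$-partite $r$-graph whose $i$-th part is the set of colour-$i$ components and whose edges are the $r$-sets $\{C_1^\chi(v),\dots,C_r^\chi(v)\}$ over $v\in V(G)$. A collection $S$ of components covers $V(G)$ precisely when it meets every edge of $H_\chi$, so the least number of monochromatic components covering $V(G)$ under $\chi$ is exactly $\tau(H_\chi)$, whence $\tc_r(G)=\max_\chi\tau(H_\chi)$ over all $r$-colourings. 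Thus for part \ref{itm:ub} it suffices to show that when $np^k>C\log n$ every $H_\chi$ has the \pcp{r}{k}, and for part \ref{itm:lb} it suffices to exhibit, when $np^{k+1}<c\log n$, one $(r+1)$-colouring whose pattern hypergraph has cover number at least $\hp_r(k)+1$ (when $\hp_r(k)=\infty$ the first part is vacuous and the construction below, applied to hypergraphs of unboundedly large cover number, gives $\tc_{r+1}(G)\to\infty$).

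For part \ref{itm:ub}, I would first note that if $np^k>C\log n$ with $C$ large enough in terms of $k$, then w.h.p.\ every $k$-subset of $V(G)$ has a common neighbour in $G$ --- a one-line Chernoff and union bound estimate, with $\log n$ the correct order. I claim this single deterministic property forces $H_\chi$ to have the \pcp{r}{k} for \emph{every} $\chi$ at once: given at most $k$ vertices $v_1,\dots,v_\ell$, pad them to a $k$-set, take a common neighbour $w$, and note that $\{C_1^\chi(w),\dots,C_r^\chi(w)\}$ is a transversal cover of the corresponding edges of $H_\chi$, since each $v_j$ lies with $w$ in the colour-$\chi(wv_j)$ component, i.e.\ $C_i^\chi(v_j)=C_i^\chi(w)$ for $i=\chi(wv_j)$. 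By definition of $\hp_r(k)$ this gives $\tau(H_\chi)\le\hp_r(k)$, and maximising over $\chi$ yields $\tc_r(G)\le\hp_r(k)$.

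For part \ref{itm:lb}, let $H^*$ be an $r$-partite $r$-graph with parts $V_1,\dots,V_r$ enjoying the \pcp{r}{k} with $\tau(H^*)=m:=\hp_r(k)$. I aim to build an $(r+1)$-colouring $\chi$ of $G\sim\Gnp$ with $\tau(H_\chi)\ge m+1$. Colours $1,\dots,r$ are used to ``realise'' $H^*$: choose a witness vertex $v_e$ for each edge $e$ of $H^*$, and design the colouring so that the colour-$i$ components biject with $V_i$, with $C_i^\chi(v_e)$ the component corresponding to $e$'s $i$-th coordinate, while every witness stays isolated in colour $r+1$. Almost all remaining edges get colour $r+1$, creating one large colour-$(r+1)$ component $D_0$ that contains no witness; finally add a vertex $z$ with no witness among its neighbours and colour all of $z$'s edges $r+1$, so $z$ joins $D_0$ and is isolated in colours $1,\dots,r$. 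Now suppose $S$ is a cover of $H_\chi$ with $|S|\le m$. The edge of $H_\chi$ at $z$ forces $S$ to contain $\{z\}$ or $D_0$, and neither covers any witness, so the $\le m-1$ other elements of $S$ must cover every $v_e$; but up to the irrelevant singleton $\{v_e\}$ the witnesses' edges of $H_\chi$ are exactly the edges of $H^*$ (the components picked out correspond to vertices of $H^*$), and covering them --- even if $j$ of the $v_e$ are covered by their own singletons --- costs at least $(\tau(H^*)-j)+j=m$, because deleting $j$ edges drops $\tau(H^*)$ by at most $j$. This contradiction shows $\tc_{r+1}(G)\ge\tau(H_\chi)\ge m+1$.

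The heart of the matter, and where I expect the real difficulty, is the realisation step. Since $\Gnp$ has near-linear degrees, each witness $v_e$ has many neighbours, and a vertex $u$ adjacent to several witnesses threatens to merge colour-$i$ components that must stay distinct. This is exactly where the hypotheses cooperate: because $np^{k+1}<c\log n$, common neighbours of $(k+1)$-tuples are scarce, which lets me pick the witness set so that (after discarding a negligible set of vertices) no vertex of $G$ is adjacent to more than $k$ witnesses; then, for a vertex $u$ adjacent to witnesses $v_{e_1},\dots,v_{e_j}$ with $j\le k$, the \pcp{r}{k} of $H^*$ supplies a transversal cover of $\{e_1,\dots,e_j\}$, which prescribes colours for the edges $uv_{e_\ell}$ so that $u$ attaches only to the handful of components named by that transversal cover and never bridges two distinct components of a single colour. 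Pushing this through --- keeping the ``bulk'' blocks internally connected in their colour, checking that $D_0$ really misses every witness, and verifying that $H_\chi$ restricted to its first $r$ parts genuinely contains $H^*$ --- is the technical core; the two reductions and the common-neighbour estimates above are comparatively routine.
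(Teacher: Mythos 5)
Your reduction of $\tc$ to the cover number of a pattern hypergraph $H_\chi$ and your part~\ref{itm:ub} argument are essentially identical to the paper's: one shows that w.h.p.\ every $k$-set of vertices of $\Gnp$ has a common neighbour (\Cref{cor:common-nbors}), and for any colouring the common neighbour's edge in $H_\chi$ is a transversal cover of the $k$ chosen edges (this is exactly \Cref{lem:common-nbors-relation}). No comments there.

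For part~\ref{itm:lb}, you have the right high-level plan --- pick an extremal $H^*$ and encode it into an $(r+1)$-colouring of $\Gnp$, using the scarcity of common neighbours of $(k+1)$-tuples to keep per-vertex local data at size $\le k$ --- but the realisation step you flag as the technical core is where your proposal diverges from what can actually be proved, and it is left genuinely incomplete. The specific obstruction: your plan asks for the colour-$i$ components to \emph{biject} with $V_i(H^*)$. That is a two-sided requirement, and the ``completeness'' direction (two witnesses whose $H^*$-edges share a vertex in $V_i$ actually end up in the same colour-$i$ component of $G$) is both unnecessary and not clearly achievable: whether such a connection exists depends on the random graph supplying appropriate common neighbours, and insisting on it creates conflicts with the simultaneous demand that no \emph{forbidden} merges occur. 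The paper sidesteps this entirely. It places the witnesses into an independent set $S$ in which no $k{+}1$ vertices have a common neighbour (\Cref{lem:random-for-lb}), encodes $H^*$ not by its vertices but by an auxiliary $r$-coloured \emph{multigraph} $\mathcal{A}(H^*)$ on $E(H^*)$ (colour-$i$ edges record intersections inside part $i$), and then proves only the one-directional containment that matters: any colour-$j$ component of $G$, restricted to $S$, sits inside a single colour-$j$ component of $\mathcal{A}(H^*)$, via a transitivity argument along paths through common neighbours (\Cref{prop:cover-to-colour}). This translates every monochromatic cover of $S$ in $G$ into a cover of $\mathcal{A}(H^*)$ of the same size, which is at least $\tau(H^*)$; adding the isolated colour-$(r{+}1)$ vertex $w$ gives the ``$+1$''. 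This also absorbs your case analysis about witnesses covered by their own singletons automatically, and it makes the independence of the witness set an explicit hypothesis rather than an afterthought (without independence, edges between witnesses force unwanted merges or ruin the colour-$(r{+}1)$ isolation). So: right target, right raw materials, but the construction as written over-constrains itself and the crucial verification is missing; the paper's $\mathcal{A}(H)$ intermediary is precisely the device that lets the argument close.
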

What this is saying is that for $\left(\frac{\log n}{n}\right)^{1/k} \ll p \ll \left(\frac{\log n}{n}\right)^{1/(k+1)}$ the value of $\tc_{r}(\Gnp)$ is essentially determined by $\hp_r(k)$. 

Let us point out that the $k=r+1$ (first non-trivial) case of estimating $\hp_r(k)$ is directly related to Ryser's famous conjecture \cite{henderson} which claims $\tau(H)\le (r-1)\nu(H)$ for $r$-partite $r$-graphs. Here the \emph{matching number} $\nu(H)$ is defined as the largest number of pairwise disjoint edges of $H$. Indeed, it is easy to check that the \pcp{r}{(r+1)} for $H$ is equivalent to $\nu(H)\le r$. So in the special case when $\nu(H)=r$, Ryser's conjecture is equivalent to $\hp_r(r+1)\le r(r-1)$.

\vspace{-0.2cm}
\subsection{Covering by components of different colours}
\vspace{-0.1cm}
The second setting in which our method works very well is for graphs of large minimum degree.

Covering such graphs using monochromatic components was first considered by Bal and DeBiasio \cite{bal18}. A particularly nice conjecture they raised is that any graph $G$ with minimum degree $\delta(G)\ge (1-1/2^r)n$ can be covered by monochromatic components of \emph{distinct} colours. They gave an example showing that if true, this conjecture is best possible. Gir\~ao, Letzter and Sahasrabudhe \cite{girao2017partitioning} proved the conjecture for $r\le 3$. Our methods enable us to completely resolve this conjecture for all values of $r$.
\begin{thm}\label{thm:min-deg}
Let $G$ be an $r$-coloured graph on $n$ vertices with  $\delta(G)\ge (1-1/2^r)n$. Then the vertices of $G$ can be covered by monochromatic components of distinct colours.
\end{thm}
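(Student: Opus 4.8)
The plan is to prove, by induction on $r$, the following slightly stronger statement: if $W\subseteq V(G)$ is a set with $\deg_G(w)\ge (1-1/2^r)n$ for every $w\in W$, then $W$ can be covered by monochromatic components of $G$ of distinct colours (the theorem being the case $W=V(G)$). For $r=1$ this is immediate, since a set $W$ with $\deg_G(w)\ge n/2$ for all $w\in W$ must lie in a single component of $G$: two vertices in distinct components $A,A'$ would have degrees at most $|A|-1$ and $|A'|-1$, which sum to less than $n$. It is also worth keeping in mind the equivalent reformulation in terms of hypergraph covers: writing $c_i(v)$ for the colour-$i$ component of $G$ containing $v$, a cover of the desired form is exactly a transversal cover of the $r$-partite $r$-graph whose $i$-th part is the set of colour-$i$ components and whose edges are the sets $\{c_1(v),\dots,c_r(v)\}$ over $v\in V(G)$, and the degree hypothesis says that each edge is disjoint from fewer than $n/2^r$ of the other edges.

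For the inductive step, let $M_i$ be a largest colour-$i$ component and $M_i'$ a second-largest one. Case 1 is that $\sum_{i=1}^r |M_i'|\le (1-1/2^r)n$. Then I claim $W\subseteq\bigcup_i M_i$, which settles this case as the $M_i$ have pairwise distinct colours. Indeed, suppose $w\in W$ is not in $\bigcup_i M_i$. For each $i$ the component $c_i(w)$ is then different from $M_i$, hence it is not a largest colour-$i$ component and so $|c_i(w)|\le |M_i'|$. On the other hand every neighbour of $w$ is joined to $w$ by an edge of some colour $i$ and therefore lies in $c_i(w)$, while $w$ itself lies in every $c_i(w)$; hence $\deg_G(w)+1\le\sum_i|c_i(w)|\le\sum_i|M_i'|\le (1-1/2^r)n$, contradicting $\deg_G(w)\ge (1-1/2^r)n$.

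Case 2 is that some colour $i$ has at most one component of size exceeding $n/2^r+1$; let $M$ denote that component if it exists, and $M=\emptyset$ otherwise. Take $M$ as the colour-$i$ member of the cover. For any $w\in W\setminus M$ the component $c_i(w)$ is distinct from $M$, hence has size at most $n/2^r+1$, so the colour-$i$ degree of $w$ is at most $n/2^r$. Consequently, in the $(r-1)$-coloured graph $G'$ obtained from $G$ by deleting all colour-$i$ edges, every $w\in W\setminus M$ has $\deg_{G'}(w)\ge (1-1/2^r)n-n/2^r=(1-1/2^{r-1})n$. Since deleting colour-$i$ edges does not change the colour-$j$ components for $j\ne i$, the inductive hypothesis applied to $G'$ and the set $W\setminus M$ yields a cover of $W\setminus M$ by monochromatic components of $G$ of distinct colours from $[r]\setminus\{i\}$; together with $M$ this covers $W$.

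The crux, and the step I expect to be the real obstacle, is to verify that Cases 1 and 2 are exhaustive — equivalently, that the degree hypothesis excludes the situation in which some $w\in W$ is not covered by $\bigcup_i M_i$ while every colour has two components of size greater than $n/2^r+1$. The tool I would use is a counting argument on the partition of $V(G)$ into ``cells'' indexed by the tuples $(c_1(v),\dots,c_r(v))$: two vertices whose tuples differ in every coordinate are non-adjacent, so selecting for each colour one of its two large components and coarsening the cell structure accordingly forces many pairs of vertices to be non-adjacent. Summing these forced non-adjacencies over the $r$ colours along a suitably chosen transversal of cells should overshoot the total ``deficiency'' available (each $w\in W$ has fewer than $n/2^r$ non-neighbours), which is the desired contradiction. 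Pinning down the correct transversal and making the arithmetic work is the technical heart of the argument; with that case in hand, the two reductions above complete the induction.
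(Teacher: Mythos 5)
Your Cases 1 and 2 are each handled correctly, and the base case and the reductions are sound. But the proof is incomplete, and you flag the crux yourself: the two cases are not exhaustive. Failure of Case 2 only gives $|M_i'|>n/2^r+1$ for every colour, while failure of Case 1 gives $\sum_i|M_i'|>(1-1/2^r)n$; these are easily compatible (e.g.\ for $r=2$ with each $|M_i'|$ around $3n/8$, we have $\sum|M_i'|\approx 3n/4$), so neither reduction applies. The Case 1 computation only shows $\deg(w)+1\le\sum_i|M_i'|$ when $w\notin\bigcup_i M_i$, and when $\sum_i|M_i'|$ is large this is not a contradiction. Your sketched counting argument over cells is not carried out, and it is not clear that the approach can be completed along these lines; the claim "failure of Case 2 forces $W\subseteq\bigcup_i M_i$" is an unproven assertion, and arguably the whole content of the theorem is hidden in it. So there is a genuine gap here.

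The paper's proof is entirely different and sidesteps the case analysis: it observes that $\delta(G)\ge(1-1/2^r)n$ forces any $2^r$ vertices to have a common neighbour (allowing a vertex as its own neighbour), which by \Cref{lem:common-nbors-relation} means the auxiliary $r$-partite $r$-graph whose edges are the tuples $\{c_1(v),\dots,c_r(v)\}$ has the \pcp{r}{2^r}. It then invokes \Cref{thm:ubend-partite}, which says any such hypergraph admits a transversal cover; that theorem is proved via Alon's generalisation of the Bollob\'as set-pairs inequality (\Cref{thm:alon-set-pairs}). The paper's route thus turns the degree hypothesis into a local covering property of a hypergraph and applies a sharp extremal set-pairs bound, whereas your proposed induction tries to track explicit large components colour by colour; if the missing exhaustiveness step could be established it would give a more elementary proof, but as written it does not yet go through.
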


%

\vspace{-0.2cm}
\subsection{Covering general hypergraphs}
\vspace{-0.1cm}
For our connection discussed in \Cref{subsec:connec}, we needed the considered hypergraphs to be partite. As it turns out, the behaviour does not change much if we drop this condition. This gives rise to the following, independently interesting and perhaps even more natural, question: What is the largest possible cover number of an $r$-graph in which any $k$ edges have a cover of size at most $\ell$? Is it even finite? We denote this maximum by $\h_r(k,\ell)$ if it exists, and write $\h_r(k,\ell)=\infty$ otherwise. The question of determining the parameter $\h_r(k,\ell)$ was raised by Erd\H{o}s, Hajnal and Tuza \cite{erdos-hajnal-tuza} more than 25 years ago and was later studied by Erd\H{o}s, Fon-Der-Flaass, Kostochka and Tuza; Fon-Der-Flaass, Kostochka and Woodall; Kostochka   \cite{erdos-flaass-kostochka-tuza, kostochka-flaass-woodall, kostochka}. They were all interested in a setting when $r$ is big and $\ell$ is fixed and small.  Here we develop an approach which gives good estimates on $\h_r(k,\ell)$ in general and in particular when $\ell=r$, which is the case relevant to our problem.  

Let us now define the covering property in this setting, as well.
\vspace{-0.1cm}
\begin{defn*}
We say that a hypergraph $H$ has the \emph{\cp{k}{\ell}} if any subgraph of $H$ with at most $k$ edges has a cover of size at most $\ell$. 
\end{defn*}
\vspace{-0.2cm}
Note that a hypergraph $H$ has the \cp{k}{1} if and only if any $k$ edges of $H$ have a non-empty intersection. This property is the central concept in the study of Helly families, dating all the way back to 1913: Helly's theorem \cite{helly} states that if in a collection $\mathcal F$ of convex sets in $\mathbb{R}^d$, any $d+1$ sets have a non-empty intersection, then all the sets in $\mathcal F$ intersect. 
There is a vast literature on studying what other families exhibit such Helly-type properties. For some classical examples from geometry, see \cite{danzer-helly}, for Helly-type results for hypergraphs, see \cite{furedi,bollobas-book,lehel-helly}.
For example, a folklore analogue for hypergraphs states that if any $r+1$ edges of an $r$-graph intersect, then all the edges have a non-empty intersection (see \cite{furedi}). With our notation, this corresponds to $\h_r(r+1,1)=1$.
Studying $\h_r(k,\ell)$ leads to a natural generalisation of this result. 

An interesting special case is to determine when $\h_r(k,\ell)=\ell$ holds, i.e.\ for what $k$ we know that if any $k$ edges of a hypergraph have a cover of size $\ell$, then the whole hypergraph has a cover of size $\ell$. As observed by F\"uredi \cite{furedi}, a result of Bollob\'as \cite{bollobas-set-pairs} (generalising earlier work of Erd\H{o}s, Hajnal and Moon \cite{erdos-hajnal-moon} for graphs), implies
that the answer to this question is $k\ge \binom{\ell+r}{r}$.

We obtain good bounds on $\h_r(k,\ell)$ in general, but for the sake of clarity, and because this case illustrates the general behaviour well, we only present our results for $r=\ell$ here, summarised in the next theorem. Our general bounds are presented in \Cref{sec:covering}.

\begin{thm}\label{thm:table}
The following table describes the behaviour of $\h_r(k,r)$ for fixed $r$ as $k$ varies. For arbitrary constants $c>1$ and $d>0$ we have
\vspace{-0.4cm}
\begin{center}
\resizebox{\textwidth}{!}{
\begin{tabular}{c|c|c|c|c|c|c}
    Range of $k$ & $[1,r]$ & $r+1$ & $ (r, cr]$ & $\left(r , e^{r/2}\right]$& $\left[e^{dr}, \infty\right)$ & $\left[\binom{2r}r,\infty\right)$ \\
    \hline
    Value of $\h_r(k,r)$ & $\infty$ & $r^2$ & $\Theta\left(r^2\right)$ & $\left[ \frac{r^2}{4\log k},\frac{16r^2 \log r}{\log k}\right)$& $\Theta(r)$ & $r$  
\end{tabular}
}

\end{center}

\end{thm}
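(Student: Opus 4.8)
The plan is to derive each column of the table from matching upper and lower bounds on $\h_r(k,r)$: two columns are soft, the $k=r+1$ entry is a Ryser-type statement, and the two interior regimes carry the real weight. Throughout let $H$ be an arbitrary $r$-graph with the \cp{k}{r}.

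If $k\le r$ then any collection of at most $k$ edges is hit by the $\le k\le r$ vertices obtained by choosing one vertex per edge, so \emph{every} $r$-graph has the \cp{k}{r}; since a matching of $t$ disjoint edges has cover number $t$, this yields $\h_r(k,r)=\infty$. For the opposite extreme $k\ge\binom{2r}{r}$, I would prove $\h_r(k,r)\le r$ via the Bollob\'as set-pair inequality: if $\tau(H)\ge r+1$, delete edges one at a time while keeping $\tau\ge r+1$ (possible since deleting an edge lowers $\tau$ by at most one), reaching a $\tau$-critical $H_0$ with $\tau(H_0)=r+1$; choosing for each $e\in E(H_0)$ an $r$-set $B_e$ covering $H_0-e$, one gets $e\cap B_e=\emptyset$ (else $B_e$ covers $H_0$) while $e\cap B_f\ne\emptyset$ for $f\ne e$, so $|E(H_0)|\le\binom{2r}{r}\le k$, contradicting the \cp{k}{r}. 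The matching on $r$ disjoint edges has cover number $r$ and the \cp{k}{r} for every $k$, which supplies the lower bound $\h_r(k,r)\ge r$ here and in the $[e^{dr},\infty)$ column.

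The three ``$\Theta(r^2)$''-flavoured entries rest on the observation that for $k\ge r+1$ the \cp{k}{r} already forces $\nu(H)\le r$ (else $r+1$ disjoint edges need $r+1$ cover vertices), so $\tau(H)\le r\,\nu(H)\le r^2$; for $k=r+1$ this is an equivalence (given $r+1$ edges that are not a matching, delete two intersecting ones, cover the remaining $r-1$ greedily, and add the common vertex). For the lower bound at $k=r+1$ I would take $r$ vertex-disjoint copies of the intersecting $r$-graph $\binom{[2r-1]}{r}$ (intersecting since $2r>2r-1$, with cover number exactly $r$): the union has $\nu=r$, $\tau=r^2$, and $\nu\le r$, hence the \cp{r+1}{r}, so $\h_r(r+1,r)=r^2$. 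For $k$ up to about $2r$, the same construction with $s$ of order $2r-k$ disjoint copies still works, since any two edges inside one copy meet, so any $k$ edges spread over the copies are coverable by at most $(k+s)/2\le r$ vertices; this gives $\tau=rs=\Theta(r^2)$. Pushing $\Omega(r^2)$ all the way to $k\le cr$ for a large constant $c$ requires a more robust construction, and it cannot simply be disjoint copies: the argument above in fact shows that a hypergraph realising $\tau=r^2$ with $\nu=r$ cannot even satisfy the \cp{r+2}{r}, since the minimality of the matching's vertex set produces two edges meeting it only inside one matching edge, at distinct vertices, which no $r$-cover of those $r+2$ edges can hit simultaneously.

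This leaves the interior range $r<k\le e^{r/2}$, where the target is $\h_r(k,r)=\Theta(r^2/\log k)$ up to the stated $\log r$ gap. For the upper bound $O(r^2\log r/\log k)$ I would iterate: while edges remain, apply the \cp{k}{r} to a uniformly random $k$-subset of the surviving edges and extract, by averaging together with a large-deviation bound, a set of at most $r$ vertices meeting all but a $k^{-\Omega(1)}$-fraction of the edges; removing it shrinks the edge count by a factor depending on $k$, and the denominator $\log k$ is exactly the resulting speed-up over the crude $O(r^2)$ bound; a sharpened version of this step, in which for $k\ge e^{dr}$ the covering property alone makes a bounded number of rounds suffice, gives the $O(r)$ bound in the $[e^{dr},\infty)$ column. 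For the matching lower bound $\Omega(r^2/\log k)$ I would build a construction with $\Theta(r/\log k)$ ``blocks'', each of cover number $\Theta(r)$ but arranged so that any $t$-edge subfamily of a block is coverable by $O(\log t)$ vertices, so that $k$ edges cost only $O(\log k)$ per block and $O(r)$ in total while the whole hypergraph still needs $\Theta(r^2/\log k)$. I expect the main obstacle to be exactly these interior lower-bound constructions — forcing a hypergraph to be globally expensive to cover yet locally (on any $k$ edges) cheap, two demands that pull in opposite directions — together with the endpoint fine-tuning (the full $\Omega(r^2)$ when $k$ is linear in $r$, and $O(r)$ when $k$ is exponential) that ties this theorem to \Cref{thm:trees-start,thm:trees-end}.
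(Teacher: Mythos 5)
Your treatment of the three ``easy'' columns is correct and matches the paper: the $k\le r$ case via disjoint matchings, the $k=r+1$ bounds via $\nu(H)\le r$ and disjoint intersecting families, and $k\ge\binom{2r}{r}$ via criticality plus Bollob\'as set-pairs. But there are genuine problems in the two interior regimes, which you yourself flag as the ``real weight'' of the theorem.

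First, your claim that the $(r,cr]$ lower bound ``cannot simply be disjoint copies'' is wrong, and it leads you to punt on this column. What you actually proved is that a hypergraph with $\tau(H)=r\cdot\nu(H)$ exactly cannot satisfy the \cp{r+2}{r} --- a true but much weaker statement. The paper's Lemma~\ref{lem:lb-start} does use disjoint copies: roughly $r/2$ copies of $K^{(r)}_{\lfloor r^2/(3k)\rfloor+r}$, each of cover number $\Theta(r^2/k)$, giving $\tau(H)=\Theta(r^3/k)=\Theta(r^2)$ for $k=O(r)$. Since each copy has far more than $r$ vertices per edge, the covering property survives up to $k\sim r$: any $\lceil 3k/r\rceil$ edges from the same copy share a common vertex by a counting argument, and one splits the $k$ edges into at most $r$ such groups. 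Your obstruction only blocks the extremal value $\tau=r^2$, not the order $\Theta(r^2)$.

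Second, for the interior $\Omega(r^2/\log k)$ lower bound, the ``$\Theta(r/\log k)$ blocks, each of cover number $\Theta(r)$, with any $t$-edge subfamily coverable by $O(\log t)$ vertices'' plan is left entirely unspecified, and it is unclear such a local property is achievable. The paper's Theorem~\ref{thm:lb-middle} does something much simpler: a \emph{single} complete $r$-graph $K^{(r)}_{t+r}$ with $t=\lfloor r^2/(2\log k)\rfloor$ has $\tau=t+1$, and each edge is avoided by only $\binom{t}{r}$ of the $\binom{t+r}{r}$ vertex $r$-sets, so a union bound over $k<\binom{t+r}{r}/\binom{t}{r}$ edges leaves a surviving $r$-cover. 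This counting trick (abstracted as Lemma~\ref{lem:lb}) is the key idea you are missing. Third, in the upper bound for the middle range you correctly gesture at random sampling of $k$ edges, but you omit the essential preliminary step: pass to a $\tau$-critical subhypergraph so that $|E(H)|\le\binom{r+t}{t}$ and hence $|V(H)|$ is bounded. Without that bound on $|V(H)|$ the union bound over $r$-subsets in your large-deviation step has no hope of being $<1$. The sketch for $[e^{dr},\infty)$ (``a sharpened version... a bounded number of rounds'') is also too vague to assess; in the paper this is not a separate argument but falls out of the same estimate once $\log k=\Omega(r)$.
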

We can actually prove slightly stronger bounds in the middle range, when $k$ is close to either extreme, see \Cref{sec:covering} for more details. 

Another reason why $r=\ell$ is a case of particular interest for us is its relation to the parameter $\hp_r(k)$, which we need for our connection result. Note that $\hp_r(k) \le \h_r(k,r)$ follows immediately from the definitions.  On the other hand, examples giving lower bounds for $\h_r(k,r)$ need not be $r$-partite, so we need to work a bit harder to obtain lower bounds for $\hp_r(k)$. Nevertheless, we can establish essentially the same lower bounds for $\hp_r(k)$ as the ones above for $\h_r(k,r)$ (see \Cref{thm:table2}).


\subsection{Organisation of the paper}
\vspace{-0.2cm}
In the next section we present some preliminary results and tools, mostly simple properties of random graphs. In \Cref{sec:direct-cover} we will prove the upper bound of \Cref{thm:trees-start}. In \Cref{sec:connection} we describe the connection between the problem of covering by monochromatic trees and hypergraph covering problems in more detail and deduce \Cref{thm:equivalence} from it. In \Cref{sec:covering} we show our results for general hypergraphs, in particular, \Cref{thm:table}. Finally, in \Cref{sec:r-partite} we show our bounds on $\hp_r(k)$ which, through our connection to monochromatic covering, yield \Cref{thm:trees-end,thm:trees-start,thm:trees-middle,thm:min-deg}. In \Cref{sec:conc-remarks} we give some open problems. We conclude the paper with an appendix in which we give an alternative perspective to the general hypergraph setting considered in \Cref{sec:covering}.

\section{Preliminaries}
Let us start with recalling a couple of simple definitions about (hyper)graphs. The set of vertices and edges of a (hyper)graph $G$ are denoted by $V(G)$ and $E(G)$, respectively. An $r$-graph or $r$-uniform hypergraph is a hypergraph with all edges of size $r$. Whenever we work with $r$-graphs, we tacitly assume $r\ge 2$. An $r$-graph $H$ is said to be $r$-partite if there is a partition of $V(G)=V_1 \cup \dots \cup V_r$ such that any $e \in E(H)$ satisfies $|e \cap V_i|=1$ for all $i$. Throughout the paper all colourings considered are \emph{edge}-colourings. Whenever we have an $r$-colouring, we label the colours by the first $r$ positive integers.  

We now present several properties of random graphs that play a role in our arguments. Let us begin with the following simple Chernoff-type bound on the tail of the binomial distribution.
\begin{lem}[{\cite[Appendix A]{alon-spencer}}]\label{lem:chernoff}
Let $X \sim \text{Bin}(n,p)$ be a binomial random variable. Then 
\[ \pr(X<np/2) \le e^{-np/8}.\]
\end{lem}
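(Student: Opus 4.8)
The plan is to use the standard exponential moment method (the Cram\'er--Chernoff bound). Write $X = \sum_{i=1}^n X_i$ as a sum of independent Bernoulli($p$) random variables. For any parameter $t>0$, Markov's inequality applied to the nonnegative random variable $e^{-tX}$ gives
\[
\pr(X < np/2) = \pr\!\left(e^{-tX} > e^{-tnp/2}\right) \le e^{tnp/2}\,\mathbb{E}\!\left[e^{-tX}\right].
\]
By independence, $\mathbb{E}[e^{-tX}] = \prod_i \mathbb{E}[e^{-tX_i}] = (1 - p + p e^{-t})^n = \bigl(1 - p(1 - e^{-t})\bigr)^n$, and using the elementary inequality $1 + x \le e^x$ this is at most $\exp\bigl(-np(1 - e^{-t})\bigr)$. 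Combining the two displays yields $\pr(X < np/2) \le \exp\bigl(-np\,(1 - e^{-t} - t/2)\bigr)$.

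It then remains to choose $t$ so that $1 - e^{-t} - t/2 \ge 1/8$. The function $t \mapsto 1 - e^{-t} - t/2$ is maximized at $t = \ln 2$, where its value equals $(1 - \ln 2)/2 = 0.1534\ldots > 1/8$; substituting $t = \ln 2$ into the bound above gives $\pr(X < np/2) \le e^{-np(1-\ln 2)/2} \le e^{-np/8}$, which is the claim. (One need not even use the optimal $t$: for instance $t = 1/2$ gives $1 - e^{-1/2} - 1/4 = 0.1442\ldots > 1/8$ as well.)

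There is essentially no obstacle here: the only genuine content is the one-line numerical check that the exponent constant exceeds $1/8$. Alternatively, one could simply invoke the general lower-tail Chernoff inequality $\pr(X < (1-\delta)\mathbb{E}[X]) \le e^{-\delta^2 \mathbb{E}[X]/2}$ with $\delta = 1/2$, which produces the stated bound immediately; but since the lemma is quoted verbatim from \cite{alon-spencer}, reproducing the short exponential-moment argument above is the natural route.
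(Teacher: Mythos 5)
Your proof is correct: the exponential-moment argument and the numerical check that $1-e^{-t}-t/2\ge 1/8$ at $t=\ln 2$ are both sound. The paper itself gives no proof of this lemma—it is cited verbatim from Appendix A of Alon–Spencer—and the derivation you reproduce is exactly the standard Chernoff-method proof found there, so your approach matches the intended one.
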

The following result is a simple consequence of union bound and the above lemma. For a proof, see e.g.\ \cite[Lemma 3.8]{cycle-cover}.
\begin{lem}\label{lem:edge-between-sets}
Let $p=p(n)>0$. The random graph $G \sim \mathcal{G}(n, p)$ satisfies the following property w.h.p.: for any two disjoint subsets $A,B\subseteq V(G)$ of size at least $\frac{10 \log n}{p}$, there is an edge between $A$ and $B$.
\end{lem}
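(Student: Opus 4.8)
The plan is a routine first-moment (union-bound) argument. Write $s=\ceil{\tfrac{10\log n}{p}}$, so that $ps\ge 10\log n$. First I would reduce to sets of size exactly $s$: if $2s>n$ then there are no two disjoint subsets of size at least $\tfrac{10\log n}{p}$ at all, and the statement is vacuous, so we may assume $2s\le n$; and if every pair of \emph{disjoint} $s$-element sets has an edge between them, then so does every pair of disjoint sets of size at least $s$, since we can pass to $s$-element subsets. Hence it suffices to bound the probability that some pair of disjoint $s$-element sets $A,B\subseteq V(G)$ spans no edge.

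For a fixed such pair $A,B$, the $s^2$ potential edges between $A$ and $B$ are present independently with probability $p$, so the probability that none of them appears is
\[
(1-p)^{s^2}\le e^{-ps^2}\le e^{-10s\log n}=n^{-10s},
\]
where I used $1-p\le e^{-p}$ and $ps\ge 10\log n$. (If one prefers to go through \Cref{lem:chernoff}, note that the number of edges between $A$ and $B$ is distributed as $\mathrm{Bin}(s^2,p)$ with mean $ps^2\ge 10s\log n$, which is in particular positive w.h.p.; but the direct estimate above is cleaner and has a better constant.)

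Now I would take a union bound over the at most $\binom ns^2\le n^{2s}$ choices of $(A,B)$: the probability that the stated property fails is at most
\[
n^{2s}\cdot n^{-10s}=n^{-8s}\le n^{-8}\xrightarrow{n\to\infty}0,
\]
using $s\ge 1$ (in fact $s\to\infty$). This proves the claim.

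There is essentially no obstacle here. The only point to be mildly careful about is that the reduction to a single set size $s$, combined with the union bound, is what forces the threshold $\Theta\!\left(\tfrac{\log n}{p}\right)$ on the set sizes: the per-pair failure probability $n^{-\Theta(ps)}$ must beat the $n^{\Theta(s)}$ cost of the union bound over all pairs of sets, so we need $ps=\Omega(\log n)$, and the constant $10$ leaves ample room.
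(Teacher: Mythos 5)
Your proof is correct. The paper does not actually include a proof of this lemma; it just cites \cite[Lemma 3.8]{cycle-cover}. Your argument is the standard first-moment/union-bound calculation one would expect there: reduce to pairs of disjoint sets of size exactly $s=\lceil 10\log n/p\rceil$, bound the probability that a fixed pair spans no edge by $(1-p)^{s^2}\le n^{-10s}$, and take a union bound over at most $n^{2s}$ pairs to get a failure probability of at most $n^{-8s}\le n^{-8}\to 0$. The edge cases ($2s>n$ vacuous; monotonicity lets you pass to subsets of size exactly $s$) are handled correctly, and the arithmetic checks out.
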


The following property allows us to give a lower bound on the number of common neighbours of a small set that send at least one edge to another small set. For a set of vertices $A$, we denote the set of common neighbours of all vertices in $A$ by $N(A)$. (For convenience, we define $N(\emptyset)=V(G)$.)
\begin{lem}\label{lem:common-nbors}
Let $r\ge 1$ be an integer, $D\ge 1$ and $p=p(n)\ge \left(\frac{64Dr \log n}{n}\right)^{1/r}$. The random graph $G \sim \mathcal{G}(n, p)$ satisfies the following property w.h.p.: for any two disjoint subsets $A,B\subseteq V(G)$ such that $|A| \le D/p$ and $|B|=r-1$, there are at least $|A|\log n$ vertices in $N(B)\setminus A$ that have a neighbour in $A$.
\end{lem}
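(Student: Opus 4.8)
The plan is to use a union bound over all choices of the small sets $A$ and $B$, together with a concentration argument for each fixed pair. Fix disjoint $A, B \subseteq V(G)$ with $|A| = a \le D/p$ and $|B| = r-1$. A vertex $v \notin A \cup B$ lies in the ``good'' set we want to count precisely if (i) $v$ is adjacent to every vertex of $B$, and (ii) $v$ has at least one neighbour in $A$. These two events are independent (they concern disjoint sets of potential edges), so for a fixed $v$ the probability that $v$ is good is $p^{r-1}\bigl(1 - (1-p)^a\bigr)$. Using $1 - (1-p)^a \ge ap/2$ whenever $ap \le 1$ (and handling $ap > 1$ separately, where $1-(1-p)^a$ is bounded below by a constant), the expected number of good vertices is at least roughly $\tfrac12 a p^r (n - a - r + 1)$. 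Plugging in the hypothesis $p \ge \bigl(\tfrac{64Dr\log n}{n}\bigr)^{1/r}$, i.e. $p^r n \ge 64 D r \log n$, and $a \le D/p$ so that $a + r - 1 = o(n)$, this expectation is at least about $32 a r \log n \ge 2\,|A|\log n$ comfortably.

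Next I would prove concentration: the number $Y$ of good vertices is a sum of independent indicator variables (one per vertex $v \notin A\cup B$), so Chernoff applies. I would invoke \Cref{lem:chernoff} (or its standard two-sided form) to get $\pr\bigl(Y < \tfrac12 \mathbb{E}[Y]\bigr) \le e^{-\mathbb{E}[Y]/8}$, which is at most $e^{-4 a r \log n} = n^{-4ar}$, say. Since $\tfrac12 \mathbb{E}[Y] \ge |A|\log n$, on the complement of this bad event the set $A$, $B$ has the desired property. The final step is the union bound over all admissible pairs $(A,B)$: the number of such pairs with $|A| = a$ is at most $\binom{n}{a}\binom{n}{r-1} \le n^a \cdot n^{r-1} \le n^{a + r}$, and summing the failure probability over all $a$ from $1$ to $\lfloor D/p\rfloor$ gives at most $\sum_{a\ge 1} n^{a+r} \cdot n^{-4ar} \le \sum_{a \ge 1} n^{-a(4r-1) + r} = o(1)$ since $4r - 1 > r$ for $r \ge 1$ and the $a=1$ term already decays. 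Hence w.h.p. the property holds simultaneously for all such $A, B$.

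The only real points requiring care are bookkeeping of constants — making sure the exponent $\mathbb{E}[Y]/8$ genuinely dominates the entropy term $(a+r)\log n$ from the union bound, which is exactly why the hypothesis carries the factor $64Dr$ rather than just a constant times $\log n / n$ — and the case split on whether $ap \le 1$, which is a minor nuisance rather than a genuine obstacle. One should also note that the statement only promises $|A|\log n$ good vertices while the expectation is a much larger constant multiple of $a r \log n$; the slack is deliberate and is what makes the union bound go through, so there is no need to optimize. I expect the main (mild) obstacle to be simply organizing the two-sided estimate $ap/2 \le 1-(1-p)^a \le ap$ cleanly across both regimes of $ap$; everything else is a routine first-moment-plus-Chernoff-plus-union-bound argument of the kind already used for \Cref{lem:edge-between-sets}.
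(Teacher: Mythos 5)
Your proposal is correct and follows essentially the same route as the paper: model the count of good vertices as a binomial with success probability $p^{r-1}\bigl(1-(1-p)^{|A|}\bigr)$, lower-bound its mean using the hypothesis on $p$, apply the one-sided Chernoff bound of \Cref{lem:chernoff}, and union-bound over all choices of $A$ and $B$. The only minor cosmetic difference is that the paper avoids your proposed case split on $ap \le 1$ versus $ap > 1$ by using the single inequality $1-e^{-x} \ge \frac{x}{2D}$, valid for all $x\in[0,D]$ when $D\ge 1$, so that $1-(1-p)^{|A|}\ge 1-e^{-p|A|}\ge \frac{p|A|}{2D}$ holds in one stroke.
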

\begin{proof}
Let $A,B$ be two fixed disjoint subsets of $V(G)$ of sizes $|A|=k \le D/p$ and $|B|=r-1$. The number of vertices of $N(B)\setminus A$ that have a neighbour in $A$ follows the binomial distribution\\ $\Bin\left(n-k-r+1, p^{r-1}(1-(1-p)^k)\right)$. Notice that for large enough $n$ we have $n'=n-k-r+1>n/2$, while $pk\le D$ implies $p'=p^{r-1}(1-(1-p)^k)\ge p^{r-1}(1-e^{-pk})\ge \frac{p^rk}{2D}$ using the fact that $1-e^{-x}\ge \frac{x}{2D}$ for $0 \le x\le D$ and $1\le D$. Applying \Cref{lem:chernoff}, we obtain that the probability that there are fewer than $|A| \log n \le k \cdot \frac{p^r n}{64Dr} < \frac{n}{2}\cdot \frac{p^r k}{4D} <\frac{n'p'}{2}$ vertices in $N(B)\setminus A$ that have a neighbour in $A$ is at most
\[ e^{-n'p'/8}\le e^{-knp^r/(32D)}\le n^{-2kr}. \]

Applying the union bound over all $1 \le k \le D/p$ and the $\binom{n}{k}\binom{n-k}{r-1}$ possibilities for $A$ and $B$, we obtain that the probability that there are sets $A,B$ failing the conditions of the problem is at most
\[ \sum_{k=1}^{D/p}n^kn^{r-1}n^{-2kr} \to 0 \]
as $n \to \infty$, as claimed.
\end{proof}

Taking $D=1$ and a set $A$ of size one in the above lemma gives us the following corollary
\begin{cor}\label{cor:common-nbors}
Let $r\ge 1$ be an integer and $G \sim \Gnp$ for $p>\left(\frac{64r \log n}{n}\right)^{1/r}$. Then w.h.p.\ any $r$ vertices in $G$ have at least $\log n$ common neighbours.
\end{cor}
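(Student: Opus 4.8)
The plan is to deduce this directly from \Cref{lem:common-nbors} by specialising its parameters to $D=1$ and $|A|=1$, as hinted. First I would condition on $G$ satisfying the conclusion of \Cref{lem:common-nbors} with $D=1$; this happens \whp\ because the hypothesis $p\ge\left(\frac{64r\log n}{n}\right)^{1/r}$ is exactly the $D=1$ instance of the lemma's hypothesis $p\ge\left(\frac{64Dr\log n}{n}\right)^{1/r}$. Now let $W$ be an arbitrary set of $r$ vertices. Assume first $r\ge 2$. Pick any $v\in W$ and set $A=\{v\}$, $B=W\setminus\{v\}$; then $A$ and $B$ are disjoint, $|B|=r-1$, and $|A|=1\le 1/p$ since $p<1$ for $n$ large (as $p\to 0$ under the hypothesis), so the constraint $|A|\le D/p$ holds. \Cref{lem:common-nbors} then yields at least $|A|\log n=\log n$ vertices in $N(B)\setminus A$ that have a neighbour in $A=\{v\}$. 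Every such vertex is adjacent to $v$ and to all of $B$, hence is a common neighbour of all of $W$, giving the claimed bound.

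For the degenerate case $r=1$ the statement just asserts that every vertex has at least $\log n$ neighbours, which follows either from the $B=\emptyset$ (so $N(B)=V(G)$) case of \Cref{lem:common-nbors}, or directly from a union bound together with \Cref{lem:chernoff}, since each degree is distributed as $\Bin(n-1,p)$ and $p(n-1)/2\gg\log n$. There is essentially no obstacle in this argument: the only points requiring a moment's attention are the disjointness of $A$ and $B$ (immediate, since we split the $r$ given vertices) and checking the size constraint $|A|\le D/p$ (trivial for a singleton), so the bulk of the work has already been done in proving \Cref{lem:common-nbors}.
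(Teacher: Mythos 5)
Your proof is correct and takes exactly the approach the paper intends: specialise \Cref{lem:common-nbors} to $D=1$, take $A$ to be a singleton and $B$ the remaining $r-1$ vertices, and observe that a vertex in $N(B)\setminus A$ adjacent to $A$ is a common neighbour of all $r$ vertices. The separate treatment of $r=1$ via $N(\emptyset)=V(G)$ is a sensible (and correct) extra check.
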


As usual, an independent set in $G$ is a subset of $V(G)$ where no two vertices are connected by an edge, and the independence number $\alpha(G)$ is the largest size of an independent set in $G$. The following lemma is the only property we are going to use when proving lower bounds on $\tc_r(\Gnp)$. It is a special case of Lemma 6.4 (ii) in \cite{bal18}.
\begin{lem}\label{lem:random-for-lb}
Let $m>k\ge 2$ be integers. There is a $c>0$ such that for $G\sim \Gnp$ with $p\le \left(\frac{c \log n}{n}\right)^{1/k},$ w.h.p.\ there is an independent set $S$ in $G$ of size $m$ such that no $k$ vertices in $S$ have a common neighbour in $G$.
\end{lem}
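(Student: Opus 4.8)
The plan is to use the second moment method. Call an $m$-element set $S\subs V(G)$ \emph{good} if it is independent and no $k$ of its vertices have a common neighbour in $G$. Since $S$ is independent, any common neighbour of $k$ of its vertices must lie in $V(G)\setminus S$, so for independent $S$ being good is equivalent to: every $v\in V(G)\setminus S$ has at most $k-1$ neighbours in $S$. Let $Z=Z(G)$ count the good $m$-sets (with $m,k$ fixed and $n\to\infty$). Taking $c=\tfrac1{2\binom mk}$ (depending only on $m,k$), I will show $\mathbb{E}[Z]\to\infty$ and $\mathbb{E}[Z^2]=(1+o(1))\mathbb{E}[Z]^2$; Chebyshev's inequality then yields $\pr[Z=0]\le\bigl(\mathbb{E}[Z^2]-\mathbb{E}[Z]^2\bigr)/\mathbb{E}[Z]^2\to0$, which is the statement.

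\textbf{First moment.} Goodness of a fixed $m$-set $S$ is the intersection of two events governed by disjoint sets of potential edges, namely ``no edge inside $S$'' and ``every $v\notin S$ has at most $k-1$ neighbours in $S$'', so $\pr[S\text{ good}]=(1-p)^{\binom m2}(1-\beta)^{n-m}$ with $\beta=\pr[\Bin(m,p)\ge k]$. As $k\ge2$ forces $p\le(c\log n/n)^{1/2}=o(1)$, we have $(1-p)^{\binom m2}=1-o(1)$; and since $\beta=\binom mkp^k+O(p^{k+1})$ while $np^{k+1}=o(1)$ (here $p\le(c\log n/n)^{1/k}$ is used), also $(1-\beta)^{n-m}=(1+o(1))e^{-\binom mk\,np^k}$. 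With $q:=np^k\le c\log n$ this gives $\mathbb{E}[Z]=\binom nm\pr[S\text{ good}]=(1+o(1))\binom nm e^{-\binom mk q}\ge(1-o(1))m^{-m}n^{\,m-\binom mkc}\to\infty$, since $c<m/\binom mk$.

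\textbf{Second moment.} Split $\mathbb{E}[Z^2]=\sum_{j=0}^m\sum_{|S\cap S'|=j}\pr[S,S'\text{ both good}]$. If $|S\cap S'|=j$ and both sets are good, then every $v\notin S\cup S'$ has at most $k-1$ neighbours in $S$ and at most $k-1$ in $S'$; for distinct such $v$ these events use disjoint edge sets, hence $\pr[S,S'\text{ both good}]\le\prod_{v\notin S\cup S'}\pr\bigl[\,|N(v)\cap S|\le k-1\text{ and }|N(v)\cap S'|\le k-1\,\bigr]$. Splitting $N(v)$ over $S\setminus S'$, $S\cap S'$, $S'\setminus S$ and using independence of the three binomial counts, inclusion--exclusion yields
\[ \pr\bigl[\,|N(v)\cap S|\ge k\ \text{or}\ |N(v)\cap S'|\ge k\,\bigr]=\Bigl(2\tbinom mk-\tbinom jk\Bigr)p^k+O(p^{k+1}), \]
where $\binom jk p^k$ is the leading part of the probability that $v$ is a common neighbour of a $k$-subset inside $S\cap S'$ --- the overcount of the union bound. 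Therefore $\pr[S,S'\text{ both good}]\le(1+o(1))\exp\bigl(-(2\binom mk-\binom jk)q\bigr)$. Combining this with $\mathbb{E}[Z]^2=(1+o(1))\binom nm^2e^{-2\binom mk q}$, the count $\binom nm\binom mj\binom{n-m}{m-j}$ of such pairs, and $\binom{n-m}{m-j}/\binom nm=(1+o(1))\tfrac{m!}{(m-j)!}n^{-j}$, the $j$-th block contributes at most $(1+o(1))\,C(m,j)\,n^{-j}e^{\binom jk q}\mathbb{E}[Z]^2\le(1+o(1))\,C(m,j)\,n^{-j+\binom jk c}\mathbb{E}[Z]^2$ for some constant $C(m,j)$. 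For $j=0$ this is $(1+o(1))\mathbb{E}[Z]^2$, while for $1\le j\le m$ we have $\binom jkc\le\binom mkc=\tfrac12$, so $n^{-j+\binom jkc}\le n^{-1/2}=o(1)$. Summing the $m+1$ blocks gives $\mathbb{E}[Z^2]=(1+o(1))\mathbb{E}[Z]^2$, as required.

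\textbf{Main obstacle.} The first moment is routine; what makes the lemma nontrivial is that among any fixed $m$ vertices the expected number of obstructions (a vertex adjacent to $k$ of them) is already $\Theta(\log n)\to\infty$, so no greedy or deletion argument can work --- the variance bound is essential. The technical heart is the per-vertex estimate above: one must verify that the inclusion--exclusion correction $\pr[\,|N(v)\cap S|\ge k\text{ and }|N(v)\cap S'|\ge k\,]$ has order $p^k$, governed by the $\binom jk$ common $k$-subsets of $S\cap S'$, rather than a lower power of $p$ --- which uses the balanced overlap $|S\setminus S'|=|S'\setminus S|$ and the independence of the three binomial counts --- and one must keep control of the $(1+o(1))$ factors sitting inside exponents $e^{o(q)}$ with $q=\Theta(\log n)$, which relies on $np^{k+1}=o(1)$.
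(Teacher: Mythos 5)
Your second-moment proof is correct. Note that the paper does not provide its own proof of this lemma: it cites it as a special case of Lemma 6.4(ii) in Bal and DeBiasio \cite{bal18}, so there is no internal argument to compare against. Your argument is self-contained and the steps check out: the equivalence of goodness for an independent $S$ with ``every outside vertex has at most $k-1$ neighbours in $S$'' is valid since a vertex of an independent $S$ cannot be a common neighbour of any $k\ge 2$ vertices of $S$; the hypothesis $p\le(c\log n/n)^{1/k}$ indeed gives $np^{k+1}\to 0$ and $np^{2k}\to 0$, which justifies the $(1+o(1))$ factors in front of the exponentials; the per-vertex inclusion--exclusion correctly identifies $\binom jk p^k$ as the leading term of the intersection event (and the formula degenerates gracefully to an $O(p^{k+1})$ correction when $j<k$); and the choice $c=1/(2\binom mk)$ makes both $\mathbb{E}[Z]\to\infty$ and the overlap terms $n^{-j+\binom jk c}\le n^{-1/2}$ for $j\ge 1$. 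Your ``main obstacle'' remark is also accurate: near the threshold the expected number of bad witnesses for a fixed $m$-set is $\Theta(\log n)$, so a naive first-moment or alteration argument does not suffice and the variance control is genuinely needed.
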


The following statement is a simple generalisation of the observation that $\tc_r(K_n)\le r$.
\begin{prop} \label{prop:indepbound}
Let $r$ be an integer and $G$ be a multigraph. Then $\tc_r(G)\le r\alpha(G)$.
\end{prop}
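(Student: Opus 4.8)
The plan is to generalize the standard argument showing $\tc_r(K_n) \le r$: there, one fixes a single vertex $v$ and covers everything using the $r$ monochromatic components through $v$ (one per colour), which works because every other vertex is joined to $v$ by an edge of some colour. For a general multigraph $G$ we cannot fix a single dominating vertex, but we can use a maximum independent set as a substitute.

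First I would let $S = \{v_1, \dots, v_\alpha\}$ be a maximum independent set in $G$, so $|S| = \alpha(G) =: \alpha$. The key observation is that $S$ is \emph{dominating}: by maximality of $S$, every vertex $w \in V(G) \setminus S$ has a neighbour in $S$ (otherwise $S \cup \{w\}$ would be a larger independent set). Now, for each $i \in \{1, \dots, \alpha\}$ and each colour $j \in \{1, \dots, r\}$, let $T_{i,j}$ be the monochromatic component of colour $j$ containing $v_i$ (if $v_i$ is incident to no edge of colour $j$, take $T_{i,j} = \{v_i\}$, the trivial component). This gives a collection of at most $r\alpha(G)$ monochromatic trees (components). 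I claim they cover $V(G)$: each $v_i \in S$ lies in $T_{i,1}$, and each $w \notin S$ has some neighbour $v_i \in S$, say via an edge of colour $j$; then $w$ and $v_i$ lie in the same colour-$j$ component, so $w \in T_{i,j}$. Hence $\tc_r(G) \le r\alpha(G)$, since this cover works regardless of the colouring (we built it after seeing the colouring, which is exactly what the definition of $\tc_r$ allows).

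There is essentially no obstacle here — the only mild point to be careful about is the degenerate case where some $v_i$ has no incident edge of colour $j$, which is handled by allowing single-vertex components, and the case $\alpha(G) = 0$ (i.e.\ $G$ has no vertices), where the bound is trivial. Everything else is immediate from the maximality of the independent set. The argument does not use anything about multi-edges beyond the fact that each edge still has a single colour, so it applies verbatim to multigraphs.
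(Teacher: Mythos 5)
Your proof is correct and takes essentially the same approach as the paper: take a maximal independent set, observe that it is dominating, and cover with the (at most) $r\alpha(G)$ monochromatic components through it. The extra care you take with trivial single-vertex components is fine but not strictly needed, as the paper's convention already treats singletons as components.
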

\begin{proof}
Let $I$ be a maximal independent set, and take all monochromatic components containing a vertex of $I$. This is at most $r\alpha(G)$ components, and as $I$ is maximal, every other vertex is adjacent to $I$ in some colour, hence covered by some component.
\end{proof}

We finish the section with results of a somewhat different flavour. The following theorem, due to Alon \cite{alon-pairs}, is a far-reaching generalisation of the Bollob\'as set pairs inequality \cite{bollobas-set-pairs} and several of its variants (see \cite{furedi} for more details).

\begin{thm}[Alon] \label{thm:alon-set-pairs}
Let $V_1, \dots, V_s$ be disjoint sets, $a_1, \dots, a_s$ and $b_1,\dots,b_s$ be positive integers, and let $A_1, \dots, A_m$ and $B_1, \dots, B_m$ be finite sets satisfying the following properties:
\begin{itemize}
    \item $|A_i \cap V_j| \le a_j$ and $|B_i \cap V_j| \le b_j$ for all $1 \le i \le m$ and $1 \le j \le s$,
    \item $A_i \cap B_i = \emptyset$ for all $1 \le i \le m$ and
    \item $A_i \cap B_j \neq \emptyset$ for all $1 \le i < j \le m$.
\end{itemize}
Then $m \le \prod_{i=1}^s \binom{a_i+b_i}{a_i}$.
\end{thm}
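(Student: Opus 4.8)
The final statement is Theorem~\ref{thm:alon-set-pairs} (Alon's generalization of the Bollob\'as set-pairs inequality), so I would prove it by the standard exterior-algebra / skew-bilinear-form method, adapted to respect the product structure over the parts $V_1,\dots,V_s$.

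\textbf{Setup and strategy.} The plan is to encode each set $A_i$ (and dually each $B_i$) as a vector in a carefully chosen vector space, in such a way that the combinatorial incidence conditions translate into a triangular non-vanishing pattern for a bilinear pairing; linear-algebra independence then bounds $m$. Concretely, for each part $V_j$ I would take the polynomial ring in the variables indexed by $V_j$ and work in the subspace spanned by squarefree monomials of degree at most $a_j$ in these variables — this space has dimension $\sum_{t=0}^{a_j}\binom{|V_j|}{t}$, but the relevant quantity will be a pairing dimension of $\binom{a_j+b_j}{a_j}$, obtained by a generic substitution. The product $\prod_j \binom{a_i+b_i}{a_i}$ in the conclusion strongly signals a tensor product over the $s$ parts, so I would build the ambient space as $W = W_1 \otimes \cdots \otimes W_s$ where $W_j$ handles $V_j$ alone.

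\textbf{Key steps.} (1) \emph{Single-part pairing.} For one part $V$ with bounds $a,b$, I would recall the classical construction realizing the skew Bollob\'as bound $\binom{a+b}{a}$: assign to each point of $V$ a generic vector in $\mathbb{F}^{a+b}$ (over a large enough field); to a set $S$ with $|S|\le a$ associate the wedge $f_S$ of its vectors in $\bigwedge^{a}(\mathbb{F}^{a+b})$ (padding with extra generic vectors up to size exactly $a$), and to a set $T$ with $|T|\le b$ associate $g_T \in \bigwedge^{b}(\mathbb{F}^{a+b})$ similarly. Then $f_S \wedge g_T \in \bigwedge^{a+b}(\mathbb{F}^{a+b}) \cong \mathbb{F}$ is nonzero precisely when $S$ and the padding vectors and the vectors of $T$ together span, which—by genericity—happens iff $S \cap T = \emptyset$. (2) \emph{Tensoring over parts.} For the full problem, set $\mathbf{f}_i = \bigotimes_{j} f_{A_i \cap V_j}$ living in $W = \bigotimes_j \bigwedge^{a_j}(\mathbb{F}^{a_j+b_j})$, and dually $\mathbf{g}_i = \bigotimes_j g_{B_i \cap V_j}$. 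The natural pairing $\langle \mathbf{f}_i, \mathbf{g}_j\rangle = \prod_{t} \big(f_{A_i\cap V_t}\wedge g_{B_j \cap V_t}\big)$ is then a product of the single-part scalars. (3) \emph{Triangularity.} By Step (1) applied in each part: $\langle \mathbf{f}_i,\mathbf{g}_i\rangle = 0$ because $A_i \cap B_i = \emptyset$ forces $A_i \cap B_i \cap V_t = \emptyset$ in every part, so at least one factor survives—wait, I must be careful about the direction: $A_i\cap B_i=\emptyset$ should make the product \emph{nonzero} in this encoding, and $A_i\cap B_j\neq\emptyset$ (for $i<j$) should kill it. So I would arrange the wedge construction so that disjointness $\iff$ spanning $\iff$ nonzero, giving $\langle\mathbf f_i,\mathbf g_i\rangle\neq 0$ and $\langle\mathbf f_i,\mathbf g_j\rangle = 0$ for $i<j$ (since $A_i\cap B_j\cap V_t\neq\emptyset$ for some $t$ makes that factor vanish). (4) \emph{Independence.} The standard triangularity argument: if $\sum_i \lambda_i \mathbf{f}_i = 0$, pair with $\mathbf{g}_j$ for the smallest index $j$ with $\lambda_j\neq 0$ to get $\lambda_j\langle\mathbf f_j,\mathbf g_j\rangle = 0$, a contradiction. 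Hence the $\mathbf{f}_i$ are linearly independent, so $m \le \dim W = \prod_j \binom{a_j+b_j}{a_j}$ — here using $\dim \bigwedge^{a_j}(\mathbb{F}^{a_j+b_j}) = \binom{a_j+b_j}{a_j}$.

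\textbf{Main obstacle.} The delicate point is the genericity argument in Step (1)–(3): one must ensure that a \emph{single} choice of generic vectors over a sufficiently large field (or an algebraically independent transcendental assignment) simultaneously makes \emph{all} the relevant wedge products behave correctly — i.e.\ every "should be nonzero" product is nonzero and the structural "should be zero" ones vanish identically regardless of genericity. The vanishing ones are automatic (a repeated vector forces the wedge to zero), so only finitely many polynomial non-vanishing conditions need to be met, and a standard counting / Schwartz–Zippel argument over a large field secures a good assignment; alternatively one works over a purely transcendental extension. I would also need to double-check the padding step—augmenting $A_i\cap V_j$ up to size exactly $a_j$ with fresh generic vectors, consistently chosen—does not accidentally create spurious intersections; using vectors generic relative to everything already chosen handles this. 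Once this is set up cleanly, the rest is the textbook linear-algebra bound.
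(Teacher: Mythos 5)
The paper states this result as a citation to Alon's 1985 paper \cite{alon-pairs} and does not reprove it, so there is no in-paper proof to compare against. Your exterior-algebra encoding — a generic vector per element of $V_j$, wedge products $f_S \in \bigwedge^{a_j}(\mathbb{F}^{a_j+b_j})$ and $g_T \in \bigwedge^{b_j}(\mathbb{F}^{a_j+b_j})$ after padding, a tensor product over the parts, and a skew-triangular pairing yielding linear independence of the $\mathbf{f}_i$ in a space of dimension $\prod_j \binom{a_j+b_j}{a_j}$ — is essentially Alon's original argument and is correct once the genericity bookkeeping you flag (a single assignment making all finitely many ``should-be-nonzero'' top wedges nonzero, via Schwartz--Zippel or a transcendental extension) is carried out. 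One point you use silently in step (3) and should state: to get from $A_i\cap B_j\neq\emptyset$ (with $i<j$) to $A_i\cap B_j\cap V_t\neq\emptyset$ for some $t$, you need $A_i,B_i\subseteq V_1\cup\cdots\cup V_s$; this hypothesis is implicit in the paper's formulation (and in Alon's theorem, which is false without it), but your write-up should make it explicit so that the vanishing of the off-diagonal pairings is actually forced by a repeated element vector in some part.
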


\begin{defn}
A hypergraph $H$ is said to be \emph{critical} if every proper subgraph of $H$ has a strictly smaller cover number.
\end{defn}

\begin{cor}[Bollob\'as \cite{bollobas-set-pairs}] \label{cor:critical}
A critical $r$-graph with cover number $t+1$ has at most $\binom{r+t}{t}$ edges.
\end{cor}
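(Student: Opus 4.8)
The plan is to apply Alon's set-pairs theorem (the Theorem stated just above) with a single ground set. Let $H$ be a critical $r$-graph with $\tau(H)=t+1$, and list its edges as $E_1,\dots,E_m$. For each $i$, criticality applied to the proper subgraph $H-E_i$ obtained by deleting the edge $E_i$ gives a cover $C_i$ of $H-E_i$ with $|C_i|\le t$.

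The key observation is that each $C_i$ must be \emph{disjoint} from $E_i$. Indeed, if $C_i$ met $E_i$, then $C_i$ would be a cover of all of $H$ of size at most $t<t+1=\tau(H)$, a contradiction. On the other hand, since $C_i$ covers $H-E_i$, we have $C_i\cap E_j\neq\emptyset$ for every $j\neq i$.

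Now set $A_i=E_i$ and $B_i=C_i$, and take $s=1$ with $V_1=V(H)$, $a_1=r$, $b_1=t$. Then $|A_i\cap V_1|=|E_i|=r\le a_1$, $|B_i\cap V_1|=|C_i|\le t=b_1$, $A_i\cap B_i=\emptyset$, and $A_i\cap B_j\neq\emptyset$ whenever $i<j$ (in fact whenever $i\neq j$). Alon's theorem then yields $m\le\binom{a_1+b_1}{a_1}=\binom{r+t}{r}=\binom{r+t}{t}$, as desired.

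There is essentially no obstacle here beyond correctly unpacking the definition of criticality; the only point that needs a moment's care is verifying $C_i\cap E_i=\emptyset$ (rather than merely ``$C_i$ does not cover $E_i$''), which follows immediately from the minimality encoded in $\tau(H)=t+1$. One could alternatively invoke the original Bollob\'as set-pairs inequality directly, but routing through Alon's theorem as stated above makes the bookkeeping completely trivial.
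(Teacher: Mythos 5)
Your proof is correct and follows essentially the same route as the paper: deleting each edge, using criticality to obtain a small cover $B_i$ of the remainder, checking disjointness and cross-intersection, and invoking Alon's set-pairs theorem with $s=1$, $a_1=r$, $b_1=t$. There is nothing to add.
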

\begin{proof}
Let $H$ be a critical $r$-graph with cover number $t+1$, and denote the edges of $H$ by $A_1,\dots, A_m$. By the criticality of $H$, we know that if we remove any edge $A_i$, the remaining hypergraph admits a cover of size at most $t$. Denote this cover by $B_i$. Notice that $A_i \cap B_i = \emptyset$, as otherwise $B_i$ would be a cover of $H$ of size at most $t$, which is impossible. Furthermore, $A_i \cap B_j \neq \emptyset$ whenever $i \neq j$, because $B_j$ is a cover of $H\setminus A_j$, so in particular, it covers the edge $A_i$.

This means that the sets of vertices $A_i,B_i$ satisfy $|A_i|=r$, $|B_i|\le t$, $A_i \cap B_i= \emptyset$ and $A_i \cap B_j \neq \emptyset$ whenever $i \neq j$. Therefore, \Cref{thm:alon-set-pairs} with $s=1$, $V_1=V(H), a_1=r$ and $b_1=t$ implies $m \le \binom{r+t}{r}$.
\end{proof}

\section{Upper bound near the threshold}\label{sec:direct-cover}

We start by showing an upper bound on $\tc_r(\Gnp)$ just above the probability threshold where it becomes bounded (w.h.p.). This is the only regime in which we will not use the relation between the problem of covering with monochromatic trees and the hypergraph covering problem.

As mentioned in \Cref{thm:baldebiasio}, Bal and DeBiasio \cite{bal18} obtained an upper bound of $r^2$ when $p\gg \left(\frac{\log n}{n}\right)^{1/(r+1)}$. Their argument proceeds as follows. First, one can define the \emph{transitive closure multigraph} $\Gtc$ of $G$ on the same vertex set, where two vertices are connected by an edge of colour $i$ whenever they are in the same colour-$i$ component of $G$. Clearly, $G$ and $\Gtc$ have the same set of monochromatic components. Now when $G\sim \Gnp$ with $p\gg \left(\frac{\log n}{n}\right)^{1/(r+1)}$, it is not hard to check that $\alpha(\Gtc)\le r$. Indeed, in this probability range, any $r+1$ vertices of $G$ have a common neighbour $v$ w.h.p., so some two of them will be connected by a monochromatic path through $v$. \Cref{prop:indepbound} then yields $\tc_r(G)=\tc_r(\Gtc)\le r^2$.

In some sense, our upper bounds through the hypergraph covering problem are far-reaching generalisations of this argument. However, all of these arguments break down for $\left(\frac{\log n}{n}\right)^{1/r}\ll p \ll \left(\frac{\log n}{n}\right)^{1/(r+1)}$, when we only know that any $r$ vertices have a common neighbour. This issue was resolved in \cite{cycle-cover} by embedding ``cascades'' in $\Gnp$ to show that among any $4r-2$ vertices, some two are connected by monochromatic paths in a robust way. The argument needed there is quite technical and only works for $p>n^{-1/r+\eps}$. We present a much simpler adaptation of the ideas to prove $\alpha(\Gtc)\le 3r-2$ for the whole probability range. This will readily imply the upper bound on $\tc_r(\Gnp)$ for \Cref{thm:trees-start}.

\begin{thm} \label{thm:cascadebound}
Let $r$ be a positive integer and $G\sim \Gnp$. There is a $C>0$ such that if $p>\left(\frac{C \log n}{n}\right)^{1/r}$, then w.h.p.\ $\tc_r(G) \le (3r-2)r$.
\end{thm}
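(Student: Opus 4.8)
The plan is to reduce the theorem to an independence-number bound on the transitive closure multigraph, exactly as the paragraph preceding the statement suggests. Since $\tc_r(G)=\tc_r(\Gtc)$ and \Cref{prop:indepbound} gives $\tc_r(\Gtc)\le r\cdot\alpha(\Gtc)$, it suffices to show that w.h.p.\ $\alpha(\Gtc)\le 3r-2$; equivalently, that w.h.p.\ every set $S$ of $3r-1$ vertices of $G$ contains two vertices lying in a common monochromatic component of $G$. A pleasant feature is that once the w.h.p.\ conclusions of \Cref{cor:common-nbors} and \Cref{lem:common-nbors} hold -- the latter applied with a suitable constant $D=D(r)$, which is exactly what forces the hypothesis constant $C$ to depend on $r$ -- the remaining argument is deterministic, so there is no need for a union bound over the choice of $S$.

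Suppose for contradiction that $S$ with $|S|=3r-1$ is independent in $\Gtc$. The one structural fact used throughout is that every monochromatic component of $G$ meets $S$ in at most one vertex; equivalently, every vertex $v\in V(G)$ is ``rainbow-attached'' to the (at most $r$) vertices of $S$ adjacent to it, the edges to them receiving pairwise distinct colours. Fixing a suitable small subset $T\subseteq S$ (of size $r$ or $r-1$), \Cref{cor:common-nbors} and \Cref{lem:common-nbors} guarantee a large set $W$ of common neighbours of $T$, and after pigeonholing over the (at most $r!$) possible colour patterns on $T$ we may assume every $w\in W$ realises the same such pattern. Consequently $W$ lies inside an intersection $\bigcap_i M_i$ of certain monochromatic components $M_i$, each of which already contains a vertex of $T$; if some $M_i$ met $S\setminus T$ we would be done, so we may assume each $M_i$ meets $S$ in a single vertex of $T$ (and, when $|T|=r-1$, that one colour remains ``free'').

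The heart of the argument is then to show that the remaining vertices $S\setminus T$ cannot all avoid one another's monochromatic components. For this I would repeatedly invoke \Cref{lem:common-nbors} with ``$B$'' an $(r-1)$-subset of $S$ and ``$A$'' a subset of the current set of size at most $D/p$: each application produces many new vertices that are rainbow-attached to all of $B$ and have a neighbour in $A$; since an edge of a ``wrong'' colour from such a vertex would push a vertex of $S$ into one of the already-anchored components $M_i$ -- immediately forcing two vertices of $S$ into one component -- the colours are heavily constrained, and a further colour-pattern pigeonhole yields a large set bridging one $S$-anchored component with several more $S$-anchored components. The size bound $3r-1$ is what supplies enough vertices of $S$ -- roughly $r$ to anchor the initial fan and $r-1$ to serve as the probing set $B$, with slack -- to run this process until the bridges are forced to merge, placing two vertices of $S$ in a common monochromatic component: the desired contradiction. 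Feeding this back, $\tc_r(G)=\tc_r(\Gtc)\le r\,\alpha(\Gtc)\le r(3r-2)$.

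The step I expect to be the main obstacle is precisely this last phase: carefully tracking, colour by colour, which monochromatic component of which vertex of $S$ each freshly built set lies in; arguing that the successive colour-pattern pigeonholes are either mutually compatible or already complete the proof; and showing that the iteration terminates with a genuine merge of two $S$-anchored components rather than drifting forever through components that each meet $S$ in a single vertex. A secondary, purely arithmetic point is to keep every set handed to \Cref{lem:common-nbors} within its size cap $D/p$ while still ensuring that each step enlarges the set (or at least does not shrink it below $D/p$), which is what dictates how large $D$ -- and hence $C$ -- must be chosen as a function of $r$.
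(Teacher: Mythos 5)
Your opening reduction is exactly the paper's: pass to $\Gtc$, use \Cref{prop:indepbound}, and aim for $\alpha(\Gtc)\le 3r-2$; the observation that each vertex sends at most one edge of each colour to an independent $S$ is also the paper's starting point, as is the idea of building "rainbow" common-neighbour sets by pigeonholing the (at most $r!$) colour patterns, which is exactly how \Cref{lem:common-nbors} gets used. So the skeleton matches the paper closely.

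The genuine gap is the endgame, which you flag yourself as the "main obstacle," and it is indeed not a routine step. Your picture is of an open-ended iteration in which bridges accumulate until two $S$-anchored components are "forced to merge," but the bootstrapping in the paper never merges anything: its sole purpose is a size amplification. Concretely, the paper fixes a $(2r-1)$-subset $X\subseteq S$, lets $A$ be the \emph{largest} set rainbow to some $r$-subset of $X$, and proves (\Cref{claim:large-codegree} in the theorem's proof, via one application of \Cref{lem:common-nbors} with $B=X\setminus X'$ of size $r-1$ plus the $r!$-pigeonhole) that any rainbow set of size at most $20r!/p$ spawns a rainbow set that is a factor $\log n/r!$ larger, to a \emph{different} $r$-subset of $X$. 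Maximality of $A$ then forces $|A|\ge 10\log n/p$ — no unbounded iteration is needed. The actual contradiction then comes in one shot and uses a tool you never invoke: take $Y=S\setminus\{v_1,\dots,v_r\}$, which is another $(2r-1)$-subset of $S$ disjoint from the anchor of $A$, obtain an equally large rainbow set $B$ anchored inside $Y$, note that $A$ and $B$ are disjoint, and apply \Cref{lem:edge-between-sets} to find an edge $ab$ between them. Whatever colour $c$ that edge has, it puts $v_c\in X$ and $u_c\in Y$ in a common colour-$c$ component, contradicting the independence of $S$. This is also where the count $3r-1=(2r-1)+(r)$ comes from (one $r$-subset is removed from $S$ to form $Y$, and each side needs $r+(r-1)$ vertices to run the bootstrap), which is more structured than the "roughly $r$ to anchor, $r-1$ to probe, with slack" you offer. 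Without the two-disjoint-anchors $+$ \Cref{lem:edge-between-sets} endgame, the iteration you describe has no clear termination criterion and no concrete mechanism that places two vertices of $S$ in one component, so as written the proof does not go through.
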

\vspace{-0.38cm}
\begin{proof}
Fix an $r$-edge-colouring of $G$. By \Cref{prop:indepbound}, it is enough to show that $\alpha(\Gtc)\le 3r-2$.

So suppose for contradiction that there is an independent set $S$ of $3r-1$ vertices. Notice first that no vertex of $G$ can send two edges of the same colour to $S$, because such edges would belong to the same monochromatic component of $G$, so $\Gtc$ would have an edge in $S$. Notice that this implies that every vertex of $G$ has at most $r$ neighbours in $S$. 

Let us say that a vertex set $A$ is \emph{rainbow to} $\{v_1,\dots,v_r\}$ if $A$ is contained in the colour-$i$ component of $v_i$ for every $i$. Now let $X\subs S$ be any set of size $2r-1$, and let $A$ be a largest vertex set in $\Gtc$ that is rainbow to some $r$-subset $\{v_1,\dots,v_r\}\subs X$. Note that $|A|\ge 1$ since by \Cref{cor:common-nbors} we know that any $r$ vertices in $S$ have a common neighbour, which can not send two edges of the same colour towards $S$. The heart of our proof is the following bootstrapping argument, which immediately gives $|A|\ge \frac{10 \log n}{p}$.
\begin{claim}
If $A_0$ is a set of size at most $\frac{20r!}{p}$ that is rainbow to some $r$-set in $X$, then there is a set of size at least $\frac{|A_0|\log n}{r!}$ that is rainbow to some other $r$-set in $X$.
\end{claim}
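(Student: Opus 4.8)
The plan is to enlarge $A_0$ by passing to an appropriately chosen set of common neighbours, and then to use the hypothesis that $S$ is independent in $\Gtc$ (not merely in $G$) to argue that \emph{every} vertex of this set is automatically rainbow to one of a bounded number of $r$-sets. Say $A_0$ is rainbow to $\{v_1,\dots,v_r\}\subseteq X$, and set $B:=X\setminus\{v_1,\dots,v_r\}$, so $|B|=r-1$ (here we use $r\ge 2$, so $B\neq\emptyset$). I would first note that $A_0\cap X=\emptyset$: if some vertex of $X$ lay in $A_0$ it would share a monochromatic component of $G$ with another vertex of $S$, creating an edge of $\Gtc$ inside $S$. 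Now apply \Cref{lem:common-nbors} with this $B$ and with $A=A_0$, taking $D=20\,r!$ --- the hypothesis $|A_0|\le D/p$ holds by assumption, and $p>(C\log n/n)^{1/r}\ge (64Dr\log n/n)^{1/r}$ once $C$ is chosen large enough in terms of $r$. This yields a set $W\subseteq N(B)\setminus A_0$ with $|W|\ge |A_0|\log n$, each of whose vertices $w$ has a neighbour $a_w\in A_0$.

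Fix $w\in W$. For $j\in[r-1]$ let $c_j(w)\in[r]$ be the colour of the edge $wv_{r+j}$ (which exists since $w\in N(B)$), and let $c_0(w)\in[r]$ be the colour of $wa_w$. Since $a_w\in A_0$ lies in the colour-$c_0(w)$ component of $v_{c_0(w)}$, so does $w$. The crucial observation is that the $r$ colours $c_0(w),c_1(w),\dots,c_{r-1}(w)$ are pairwise distinct: otherwise two of the vertices $v_{c_0(w)},v_{r+1},\dots,v_{2r-1}$ --- all of which lie in $S$ --- would be reached from $w$ by monochromatic paths of the same colour, hence would lie in a common monochromatic component of $G$, creating an edge of $\Gtc$ inside $S$, which is impossible. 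Therefore $\big(c_0(w),c_1(w),\dots,c_{r-1}(w)\big)$ is a permutation of $[r]$, and $w$ is rainbow to the $r$-set
\[Y_w:=\{v_{c_0(w)},v_{r+1},\dots,v_{2r-1}\},\]
which consists of $r$ distinct vertices (as $c_0(w)\le r<r+1$) and is different from $\{v_1,\dots,v_r\}$ (it contains $v_{r+1}$).

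To conclude, I would partition $W$ according to the profile $\big(c_0(w),\dots,c_{r-1}(w)\big)$; since each profile is a permutation of $[r]$, there are at most $r!$ of them, so some class has size at least $|W|/r!\ge |A_0|\log n/r!$. All vertices in this class share the same $Y_w$, so this class is the desired set, rainbow to an $r$-subset of $X$ other than $\{v_1,\dots,v_r\}$. (In fact it already suffices to group by the single colour $c_0(w)$, which improves the bound to $|A_0|\log n/r$; the weaker $r!$ is all we need.)

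The step I expect to be the crux is the colour-distinctness observation, and in particular the realisation that working in $\Gtc$ --- where no monochromatic component contains two vertices of $S$ --- forces every vertex of $W$ to be rainbow, with no loss. Without it one could only split $W$ into many classes with no guarantee that any one of them is rainbow to a common $r$-set, and the bootstrapping would break down. The remaining ingredients --- the application of \Cref{lem:common-nbors} and the pigeonhole step --- are routine, the only care needed being the choice of the constant $C$ (and hence the cap $20\,r!/p$ on $|A_0|$, which is exactly what makes \Cref{lem:common-nbors} applicable).
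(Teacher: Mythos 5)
Your main argument is correct and follows essentially the same route as the paper: disjointness of $A_0$ and $B$, an application of \Cref{lem:common-nbors} with $D=20r!$, the observation that the colours of the $r$ edges from a common neighbour $w$ to $\{a_w\}\cup B$ must be pairwise distinct (else two vertices of $S$ would share a monochromatic component), and a pigeonhole over the $r!$ resulting colour profiles. The paper phrases the pigeonhole over the $(r-1)$-tuple of colours on edges to $B$ and then deduces the remaining colour, but since that remaining colour is determined by the $(r-1)$-tuple, this is the same partition as yours into full permutations.

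The one flaw is the parenthetical claim at the end that ``it already suffices to group by the single colour $c_0(w)$.'' That is not true: after fixing $c_0(w)=c$, the vertices in that class all lie in the colour-$c$ component of $v_c$, but for $j\ge 1$ different vertices $w$ of the class may have different colours $c_j(w)$ on the edge $wv_{r+j}$, and hence lie in different monochromatic components touching $v_{r+j}$. The definition of ``rainbow to $\{u_1,\dots,u_r\}$'' requires the \emph{entire} set to lie in a single colour-$i$ component of $u_i$ for each $i$, so one genuinely needs to fix the whole colour profile, which is exactly why both the paper and your main argument pigeonhole into $r!$ classes rather than $r$.
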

\vspace{-0.38cm}
\begin{proof}
Suppose $A_0$ is rainbow to $X'=\{w_1,\dots,w_r\}\subs X$, and let $w_{r+1},\dots,w_{2r-1}$ denote the vertices of $B=X \setminus X'$. Note that $A_0$ is disjoint from $B$ because otherwise some vertex of $B$ is in the colour-1 component of $w_1$, contradicting the independence of $X$ in $\Gtc$.

This means that we can apply \Cref{lem:common-nbors} with $D=20r!$ to $A_0$ and $B$ to find a set $A'$ of at least $|A_0|\log n$ common neighbours of $B$ that each have some neighbour in $A_0$. Take some vertex $u \in A'$. As we have noted before, all edges $uw_j$ for $j>r$ must have distinct colours. There are $r!$ possible assignments of $r-1$ distinct colours to the edges $uw_{r+1},\dots,uw_{2r-1}$, so there is a subset $A''\subs A'$ of at least $|A'|/r!$ vertices such that for every $j>r$, all edges between $A''$ and $w_j$ have the same colour. Let $c$ be the colour not used by these edges between $A''$ and $B$.

By definition, every vertex $u\in A''$ is adjacent to some $v\in A_0$. Let $c'$ be the colour of the edge $uv$. As $A_0$ is rainbow to $X'$, $u$ is in the colour-$c'$ component of $w_{c'}$. But $X$ is independent, so $c'$ cannot be the colour of any edge $uw_j$ for $j>r$. This means that $c'=c$ for every such $u$, and hence, $A''$ is rainbow to $\{w_c,w_{r+1},\dots,w_{2r-1}\}$ (possibly reordered), as needed.
\end{proof}

Let $Y= S \setminus \{v_1,\dots,v_r\}$ and let $B$ be a largest vertex set in $\Gtc$ that is rainbow to some $r$-subset $\{u_1,\dots,u_r\}\subs Y$. The above claim also shows that $|B|\ge \frac{10\log n}{p}$. Note also that $A$ and $B$ must be disjoint since otherwise $v_1,u_1\in S$ would belong to the same colour-1 component of $G$, contradicting the independence of $S$. This means that we can apply \Cref{lem:edge-between-sets} to get an edge $ab$ with $a\in A, b \in B$. If $ab$ has colour $c$, then this implies that $v_c$ and $u_c$ belong to the same colour-$c$ component, so $v_cu_c$ is an edge within $S$ in $\Gtc$, a contradiction.
\end{proof}

\section{The connection to hypergraph covering}\label{sec:connection}

In this section we establish a connection between covering graphs using monochromatic components and a covering problem for hypergraphs.

Given an $r$-edge-colouring $c$ of a graph $G$ we build the following auxiliary $r$-partite $r$-graph $H=H(G,c)$. The vertices of $H$ are taken to be the monochromatic components of $G$ (including singleton vertices). Note that we treat components of distinct colours as different vertices of $H$ even if they consist of the same set of vertices in $G$. For every $v \in V(G)$, we make an edge $m(v)$ in $H$ consisting of all the monochromatic components that $v$ belongs to. Note that each vertex belongs to exactly one component of each colour, so $H$ is $r$-uniform and $r$-partite, where the parts of $H$ are formed by the monochromatic components of the same colour. 

\begin{prop}\label{prop:restatement}
Given an edge-colouring $c$ of a graph $G$, the minimum number of monochromatic components needed to cover $V(G)$ equals $\tau(H(G,c))$. Moreover, if $H(G,c)$ has a transversal cover, then $V(G)$ can be covered using monochromatic components of distinct colours.
\end{prop}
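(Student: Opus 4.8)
The plan is to prove both halves of the proposition by directly unwinding the definitions of $H(G,c)$ and of the map $m\colon V(G)\to E(H)$, observing that covers of $H$ and covers of $V(G)$ by monochromatic components are literally the same objects under this correspondence.

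First I would establish the equality $\tau(H(G,c)) = $ (minimum number of monochromatic components covering $V(G)$). The key observation is that a set $\mathcal C$ of monochromatic components of $G$ is exactly a set of vertices of $H$, and $\mathcal C$ covers $V(G)$ if and only if every vertex $v\in V(G)$ lies in some component of $\mathcal C$, i.e.\ if and only if $m(v)\cap \mathcal C\neq\emptyset$ for every $v$. Since every edge of $H$ has the form $m(v)$ for some $v\in V(G)$, this says precisely that $\mathcal C$ is a vertex cover of $H$. Hence the families $\{$monochromatic-component covers of $V(G)\}$ and $\{$vertex covers of $H(G,c)\}$ coincide as sets, so their minimum sizes agree; this gives the first sentence.

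For the second sentence, suppose $H(G,c)$ has a transversal cover $\mathcal C$, i.e.\ a cover containing exactly one vertex from each part of the $r$-partition. By construction the parts of $H$ are indexed by colours: part $i$ consists of the colour-$i$ monochromatic components. So a transversal cover picks exactly one colour-$i$ component for each $i\in[r]$ — that is, $\mathcal C$ consists of $r$ monochromatic components of pairwise distinct colours. By the first part (or just by the definition of a cover of $H$), $\mathcal C$ covers $V(G)$. Hence $V(G)$ is covered by monochromatic components of distinct colours, as claimed.

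I do not anticipate a serious obstacle here: the whole content is that the auxiliary hypergraph was set up so that its covers are, tautologically, the monochromatic-component covers of $G$, and that the colour classes of $G$ are the parts of $H$. The only point requiring a moment's care is to check that $E(H) = \{m(v) : v\in V(G)\}$ with no other edges — which is immediate from the definition of $H(G,c)$ — so that "hitting every edge of $H$" is the same as "hitting $m(v)$ for every vertex $v$", and likewise that each vertex of $G$ lies in exactly one component of each colour, so that $|m(v)\cap V_i|=1$ and the notion of transversal makes sense on the colour classes.
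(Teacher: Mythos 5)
Your argument is correct and is essentially identical to the paper's: you show that a set of monochromatic components covers $V(G)$ if and only if, viewed as a vertex set in $H(G,c)$, it covers every edge $m(v)$, and then read off the transversal case using that the parts of $H$ are the colour classes. No substantive differences.
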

\begin{proof}
Let $C_1,\dots, C_t$ be some monochromatic components that cover $V(G)$. For any edge $m(v)$ of $H$, we know that $v \in C_i$ for some $i$, so $C_i \in m(v)$. This implies that $C_1,\dots, C_t$ when viewed as vertices of $H=H(G,c)$ make a cover of $H$. On the other hand, if monochromatic components $C_1,\dots, C_t$ make a cover of $H$ then for any $v \in V(G)$, its corresponding edge $m(v)$ must contain some $C_i$. But this means that $v \in C_i$, and as $v$ was arbitrary, $C_1,\dots, C_t$ cover $V(G)$. Combining these observations implies the first part of the proposition. If $C_1,\dots,C_r$ make a transversal cover of $H$ then there is at most one $C_i$ in any part of the $r$-partition, or in other words at most one $C_i$ of any fixed colour. Since we know by the first part that $C_1 \cup \dots \cup C_r=V(G)$, the second part of the proposition follows.
\end{proof}

Given a colouring of $G$, this proposition allows us to translate the problem of determining the number of monochromatic components needed for covering $V(G)$ to determining the cover number of the auxiliary hypergraph. However, to determine $\tc_r(G)$, one needs to consider all colourings of $G$, and this translates to determining the minimum cover number among a class of $r$-partite $r$-graphs which arise from different colourings of $G$. As it turns out, the crucial parameter in determining $\tc_r(G)$ is the largest $k$ such that any $k$ vertices of $G$ have a common neighbour. This also works nicely with the auxiliary hypergraphs in the sense that if any $k$ vertices in $G$ have a common neighbour then for \textit{any colouring} of $G$, the auxiliary hypergraph will satisfy the \pcp{r}{k}. Recall that an $r$-partite $r$-graph has the \pcp{r}{k} if any $k$ of its edges have a transversal cover, and $\hp_r(k)$ is the maximum cover number of such a hypergraph. This will allow us to prove the following upper bound on $\tc_r(G)$. 
\begin{lem}\label{lem:common-nbors-relation}
Let $k> r\ge 2$ be integers, and let $G$ be a graph in which any $k$ vertices have a common neighbour.\footnote{For this argument, we consider a vertex to be adjacent to itself, so a common neighbour might be one of the $k$ vertices.} Then $\tc_r(G) \le \hp_r(k)$. Moreover, if we can find a transversal cover of any hypergraph having the \pcp{r}{k}, then $V(G)$ can be covered using components of distinct colours.
\end{lem}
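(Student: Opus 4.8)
The plan is to fix an arbitrary $r$-edge-colouring $c$ of $G$, pass to the auxiliary $r$-partite $r$-graph $H=H(G,c)$ constructed above, and show that $H$ satisfies the \pcp{r}{k}; the bound then follows immediately from \Cref{prop:restatement} together with the definition of $\hp_r(k)$.

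First I would verify the covering property of $H$. Take any $k$ edges of $H$; each is of the form $m(v)$ for some $v\in V(G)$, so choose vertices $v_1,\dots,v_k\in V(G)$ (not necessarily distinct) realising these edges. Applying the hypothesis to $v_1,\dots,v_k$ — padding with repetitions if fewer than $k$ of them are distinct, which only weakens the requirement — yields a common neighbour $u$, where $u$ may coincide with one of the $v_j$ by the self-adjacency convention of the footnote. For each colour $i$, let $C_i$ be the unique colour-$i$ component of $G$ containing $u$; this is a vertex of $H$ in the $i$-th part, so $\{C_1,\dots,C_r\}$ is a transversal. For each $j$: if $u=v_j$ then $v_j\in C_i$ for every $i$; otherwise $uv_j$ is an edge of some colour $i_j$, so $u$ and $v_j$ lie in the same colour-$i_j$ component $C_{i_j}$, whence $v_j\in C_{i_j}$. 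In either case $C_{i_j}\in m(v_j)$, so $\{C_1,\dots,C_r\}$ meets $m(v_j)$. Thus $\{C_1,\dots,C_r\}$ is a transversal cover of the chosen $k$ edges, and $H$ has the \pcp{r}{k}.

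Consequently $\tau(H)\le \hp_r(k)$ by the definition of $\hp_r(k)$ (the inequality being vacuous should $\hp_r(k)=\infty$), while \Cref{prop:restatement} identifies $\tau(H)$ with the minimum number of monochromatic components needed to cover $V(G)$ under the colouring $c$. Since $c$ was arbitrary, $\tc_r(G)\le \hp_r(k)$. For the ``moreover'' part, the paragraph above shows that $H=H(G,c)$ always has the \pcp{r}{k}, so under the extra assumption it admits a transversal cover; the second statement of \Cref{prop:restatement} then produces a cover of $V(G)$ by monochromatic components of distinct colours, and again $c$ was arbitrary.

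I expect the only point requiring any care is the bookkeeping around non-distinct $v_j$'s and the self-adjacency convention in the hypothesis; everything else is a direct unwinding of the definitions of $H(G,c)$, the \pcp{r}{k}, and $\hp_r(k)$, with the substance supplied by the already-established \Cref{prop:restatement}. There is no genuinely hard step here.
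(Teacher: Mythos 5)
Your proposal is correct and follows essentially the same route as the paper: fix a colouring, pass to $H(G,c)$, take any $k$ edges $m(v_1),\dots,m(v_k)$, use the hypothesis to get a common neighbour $u$, and observe that $m(u)$ (which is exactly your $\{C_1,\dots,C_r\}$) is a transversal cover of those $k$ edges, then conclude via \Cref{prop:restatement}. The only difference is cosmetic — you spell out the self-adjacency and non-distinctness bookkeeping a little more explicitly than the paper does.
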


\begin{proof}
Our goal is to show that in any $r$-colouring of $G$, we can cover the vertices of $G$ using at most $\hp_r(k)$ monochromatic components. So fix a colouring $c$ of $G$, and let $H=H(G,c)$ be the corresponding auxiliary $r$-partite $r$-graph.

Now take any set $S$ of $k$ edges in $H$. These edges correspond to $k$ vertices in $G$, so they have some common neighbour $v$. The key observation is that $m(v)$ is a transversal cover of $S$. Indeed, every hyperedge in $S$ is of the form $m(u)$ for some neighbour $u$ of $v$, and so the component of colour $c(uv)$ containing $u,v$ is in both $m(u)$ and $m(v)$. This means that $H$ satisfies the \pcp{r}{k}, so $\tau(H)\le \hp_r(k).$ By \Cref{prop:restatement} we can cover $V(G)$ using at most $\tau(H)\le \hp_r(k)$ monochromatic components, and if $H$ has a transversal cover we can do it using components of distinct colours.
\end{proof}

Combining \Cref{cor:common-nbors} and this lemma, we obtain part \ref{itm:ub} of \Cref{thm:equivalence}. It might be worth mentioning that the above proof gives an upper bound in terms of a slightly stronger \pcp{r}{k}, where the transversal cover of the $k$ edges is required to be another edge of the family. However, we are not aware of any improved bound on $\hp_r(k)$ using this extra condition. 

Somewhat unexpectedly it turns out that one can also lower bound $\tc_r(G)$ in terms of $\hp_r$. Here the relevant property of $G$ is that it contains a big enough independent set in which no $k+1$ vertices have a common neighbour. 

\begin{lem}\label{lem:lower-bound-equivalence}
Let $k>r \ge 2$ be integers. There is a $C=C(r)$ such that if $G$ is a graph with an independent set of size $C$ in which no $k+1$ vertices have a common neighbour in $G$ then $\tc_{r+1}(G) \ge \hp_{r}(k)+1$.
\end{lem}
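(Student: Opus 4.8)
The plan is to take a worst-case $r$-partite $r$-graph $H^*$ witnessing $\hp_r(k)$ — that is, an $r$-partite $r$-graph with the $r$-partite $k$-covering property and $\tau(H^*)=\hp_r(k)$ (we may assume $\hp_r(k)<\infty$, else there is nothing to prove) — and use it to build an $(r+1)$-colouring of $G$ with no small monochromatic cover. The key idea is that the large independent set $S$ in $G$ (with the ``no $k+1$ common neighbours'' property) gives us exactly the flexibility to realise $H^*$ inside $G$: identify the edges of $H^*$ with vertices of $S$ (so we need $|S|=C\geq |E(H^*)|$; since $H^*$ is critical we may bound $|E(H^*)|\le\binom{r+\hp_r(k)-1}{r}$ by \Cref{cor:critical}, and $\hp_r(k)\le\h_r(k,r)$, so $C$ depends only on $r$), and identify the $r$ parts of $H^*$ with $r$ of the $r+1$ colour classes. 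The colour-$i$ components should be forced to correspond to the vertices of the $i$-th part $V_i$ of $H^*$, with the component associated to $x\in V_i$ containing precisely those vertices of $S$ whose corresponding edge of $H^*$ passes through $x$.

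First I would colour the edges of $G$ as follows. For each $i\in[r]$ and each vertex $x\in V_i(H^*)$, let $S_x\subseteq S$ be the set of vertices of $S$ corresponding to edges of $H^*$ through $x$; these sets partition $S$ for each fixed $i$. Since $S$ is independent in $G$, edges incident to $S$ go to $V(G)\setminus S$. We want each $S_x$ to lie in a single colour-$i$ component; the cleanest way is to use common neighbours: for $x\in V_i$, if $|S_x|\le k$ then the vertices of $S_x$ have a common neighbour $w_x\in V(G)$ (a vertex is its own neighbour here), and we colour all edges from $w_x$ to $S_x$ with colour $i$, making $S_x$ part of one colour-$i$ component via $w_x$. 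All remaining edges of $G$ (in particular every edge inside $V(G)\setminus S$ not yet coloured, and every edge from a vertex of $S$ not handled above) get the auxiliary colour $r+1$. The point of the hypothesis ``no $k+1$ vertices of $S$ have a common neighbour'' is that it forces any colour-$i$ component (for $i\le r$) to meet $S$ in at most $k$ vertices — indeed any colour-$i$ component restricted to $S$ that had $\ge k+1$ vertices would have all of them adjacent in colour $i$ to some internal vertex, contradicting the hypothesis (one must be slightly careful and argue via a spanning structure, but the ``no common neighbour'' condition is exactly what rules out a large star and, with a little work, a large component footprint in $S$).

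Next I would argue about covers. Suppose $\tc_{r+1}(G)\le\hp_r(k)=:t$, so there are $t$ monochromatic components $C_1,\dots,C_t$ covering $V(G)$, hence covering $S$. At most... well, we want to reduce to colours $1,\dots,r$: a colour-$(r+1)$ component could a priori cover many vertices of $S$, so this is the crux. Here is where the construction must be tuned: we should arrange that colour $r+1$ covers no vertex of $S$ at all within a component — e.g. by making $S$ an independent set even in the colour-$(r+1)$ graph, which is automatic since $S$ is independent in $G$, but a colour-$(r+1)$ component could still contain two vertices of $S$ via an external path. To kill this, I would instead \emph{not} use a blanket colour $r+1$ on edges touching $S$: colour \emph{every} edge incident to $S$ with one of the colours $1,\dots,r$ according to the membership structure above (handling the $|S_x|>k$ case by splitting $S_x$ into blocks of size $\le k$, which only adds more colour-$i$ components — this is fine, it just means $H^*$ was not critical and we should have chosen $H^*$ critical, so $|S_x|\le$ (number of edges) but actually each $S_x$ can be large; a cleaner fix: replace $H^*$ by a critical subgraph, whose edge set has size $\le\binom{r+t-1}{t-1}$, and for which transversal covers still fail — criticality guarantees no transversal cover of size $<t$). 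Then $S$ is independent in colours $1,\dots,r+1$, and moreover no colour-$(r+1)$ edge is incident to $S$, so colour $r+1$ contributes nothing to covering $S$. Therefore $C_1,\dots,C_t$ restricted to the colours $\le r$ cover $S$, and each such component meets $S$ in the set $S_x$ for some $x\in V_i(H^*)$ (by construction the colour-$i$ components partition $S$ exactly as the parts' vertices do). The family $\{x : C_j\text{ is the colour-}i\text{ component } S_x,\ j\in[t]\}$ is then a set of $\le t$ vertices of $H^*$ hitting every edge of $H^*$ — a cover of size $\le t=\hp_r(k)$. But criticality (or the choice of $H^*$) gives that $H^*$ has cover number exactly $\hp_r(k)$, and we need strictly: actually we need to contradict $\tau(H^*)=\hp_r(k)$, so instead suppose $\tc_{r+1}(G)\le\hp_r(k)$ is \emph{at most} $\hp_r(k)$ — fine, that yields a cover of $H^*$ of size $\le\hp_r(k)=\tau(H^*)$, no contradiction! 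So I must be more careful: take $H^*$ with the $r$-partite $k$-covering property and $\tau(H^*)=\hp_r(k)$, and note any cover of $H^*$ has size $\ge\hp_r(k)$; to get a contradiction from $\tc_{r+1}(G)\le\hp_r(k)$ we need the $t$ components to actually correspond to a cover of size $\le\hp_r(k)-1$, which comes from the observation that covering \emph{all} of $G$ (not just $S$) with colours $\le r$ forces one component to be ``wasted'' — e.g. the colour-$i$ components used must also cover the auxiliary vertices $w_x$, and there is a vertex of $G$ not coverable by colours $1,\dots,r$ unless we spend a colour-$(r+1)$ component, leaving only $t-1$ for $S$.

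The main obstacle, clearly, is this bookkeeping around the $(r+1)$-st colour: one must design the gadget so that covering the ``glue'' vertices (the common neighbours $w_x$ and the rest of $V(G)\setminus S$) genuinely costs one component beyond what is needed for $S$, turning a hypothetical cover of size $\hp_r(k)$ of $G$ into a transversal... no, into an ordinary cover of $H^*$ of size $\hp_r(k)-1$, contradicting $\tau(H^*)=\hp_r(k)$. I expect the clean way to do this is: add one extra vertex $z$ to $G$'s picture (or use an existing vertex outside $S$) that is joined to everything only in colour $r+1$, so colour $r+1$ has a spanning star and covering $z$ and all of $V(G)\setminus(S\cup\{w_x\})$ is ``free'' in colour $r+1$ with a single component, but that single component, being forced to have colour $r+1$, cannot help cover $S$; hence $t-1$ components of colours $\le r$ must cover $S$, giving an $H^*$-cover of size $t-1<\hp_r(k)=\tau(H^*)$, the desired contradiction. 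Verifying that the resulting colouring of $G$ has \emph{no} monochromatic colour-$i$ component ($i\le r$) straddling two different $S_x$'s — i.e. that the $H$-picture of the colouring really is $H^*$ on the part touching $S$ — is where the ``no $k+1$ common neighbours'' hypothesis and a short argument about component footprints do the work, and I'd single that verification out as the step most likely to need care.
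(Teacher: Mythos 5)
Your proposal has the right high-level idea — embed a critical extremal $H^*$ into an independent set $S$ with one vertex of $S$ per edge of $H^*$, and force the $i$th colour classes to trace the $i$th part — but the construction you actually write down has a genuine gap, and you sense it yourself when you worry about $|S_x|>k$. Your plan defines the colour-$i$ components ``from inside'': for each part-vertex $x$ of $H^*$, find a common neighbour $w_x$ of $S_x$ (the vertices of $S$ on edges through $x$) and colour $w_x$'s edges to $S_x$ with colour $i$. This fails for two reasons. First, there is no bound on $|S_x|$ in terms of $k$ or $r$ — criticality of $H^*$ bounds the total number of edges, not the degree of any one vertex — so the hypothesis (no $k+1$ common neighbours) can forbid the common neighbour $w_x$ from existing. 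Your suggested fix (split $S_x$ into blocks of size $\le k$) is not ``fine'': it creates several colour-$i$ components where you wanted one, and then a cover of $G$ could correspond to choosing only some of those blocks, which translates into something weaker than a cover of $H^*$; the lower bound evaporates. Second, a single vertex could serve as $w_x$ for two $x$ in different parts, and since a given $s\in S$ lies in $S_x\cap S_y$ (its edge passes through one vertex of each part), the edge from that vertex to $s$ would be assigned two colours.

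The paper colours in the opposite direction, and this is the step your proposal is missing: for each vertex $v\in T=V(G)\setminus S$ look at $S_v:=N(v)\cap S$, which has size at most $k$ by the hypothesis, then use the \pcp{r}{k} to partition $S_v$ into $r$ pieces $S_1(v),\dots,S_r(v)$ each lying in a single colour-$i$ class of the auxiliary graph $\mathcal{A}(H^*)$, and colour the edge $vs$ with colour $i$ when $s\in S_i(v)$. This makes the colouring well-defined (each edge is decided once, by its endpoint in $T$) and guarantees that any monochromatic colour-$i$ component of $G$, restricted to $S$, sits inside a single colour-$i$ component of $\mathcal{A}(H^*)$, so covering $S$ needs at least $\tau(H^*)=\hp_r(k)$ components. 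The $(r+1)$st colour is then handled cleanly: colour all edges inside $T$ with colour $r+1$, choose $w\in T$ not adjacent to $S$ (possible since $|S|\le C-1$), and observe that $w$ forces a colour-$(r+1)$ component while no colour-$(r+1)$ edge touches $S$, so that component is wasted for $S$. You were circling this last point (the ``wasted component''), but you never land on a construction that makes it work, because you never actually colour the edges incident to $S$ in a conflict-free way that respects the structure of $H^*$.
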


Before turning to the proof let us give a definition of an auxiliary hypergraph and establish some of its properties, which we will use in the proof. Given an $r$-partite $r$-graph $H$ we construct an auxiliary $r$-coloured multigraph $\mathcal{A}(H)$. Let us denote by $P_1,\dots,P_r$ the parts of $H$. For the vertex set of $\mathcal{A}(H)$ we take $E(H)$ and for any $e \in E(H)$ we denote the corresponding vertex in $\mathcal{A}(H)$ by $a(e)$. As for the edge set, we place an edge of colour $i$ between two vertices of $\mathcal{A}(H)$ whenever the corresponding edges in $E(H)$ intersect in $P_i$. Note that if these edges intersect in several different parts, then we add multiple edges (which will be of distinct colours) between the same vertices. The following proposition establishes the properties of $\mathcal{A}(H)$ that we will use.

\begin{prop}\label{prop:cover-to-colour}
Let $H$ be an $r$-partite $r$-graph satisfying the \pcp{r}{k} and let $A=\mathcal{A}(H)$ be defined as above. Then
\begin{enumerate}
    \item We need at least $\tau(H)$ monochromatic components to cover $A$.
    \item We can partition any set $S$ of at most $k$ vertices in $A$ into $r$ parts $S_1,\dots,S_r$ (some possibly empty) such that for every $i$, $S_i$ is contained in a single component of colour $i$.
\end{enumerate}
\end{prop}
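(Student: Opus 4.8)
The plan is to prove the two statements of \Cref{prop:cover-to-colour} separately, unpacking the definition of $\mathcal{A}(H)$ in each case.

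For part (1), I would relate monochromatic components of $A=\mathcal{A}(H)$ directly to covers of $H$. Recall that a colour-$i$ edge of $A$ joins $a(e)$ and $a(f)$ precisely when $e\cap f$ contains a vertex of $P_i$. I claim that two vertices $a(e),a(f)$ lie in the same colour-$i$ component of $A$ if and only if there is a single vertex $v\in P_i$ contained in both $e$ and $f$; more generally, each colour-$i$ component of $A$ corresponds to the set of edges of $H$ passing through a fixed vertex $v\in P_i$ (the ``star'' at $v$). Indeed, if $e\cap f\ni v\in P_i$ and $f\cap g\ni w\in P_i$, then since $|f\cap P_i|=1$ we must have $v=w$, so colour-$i$ adjacency is in fact an equivalence relation whose classes are exactly these stars. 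Consequently, covering all of $V(A)=E(H)$ by $t$ monochromatic components amounts to choosing $t$ vertices of $H$ (one for each chosen star, with its colour determined by its part) so that every edge of $H$ lies in one of the chosen stars — i.e.\ a cover of $H$ of size $\le t$. Hence at least $\tau(H)$ monochromatic components are needed. (The \pcp{r}{k} hypothesis is not needed for this part.)

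For part (2), let $S\subseteq V(A)=E(H)$ with $|S|\le k$; write $S=\{a(e_1),\dots,a(e_m)\}$ with $m\le k$. By the \pcp{r}{k}, the subgraph of $H$ on edges $e_1,\dots,e_m$ has a transversal cover, i.e.\ there are vertices $v_1\in P_1,\dots,v_r\in P_r$ such that every $e_j$ contains some $v_{i}$. Define $S_i$ to be the set of $a(e_j)$ with $e_j\ni v_i$ (assigning each $e_j$ to one such $i$ if several apply, so the $S_i$ partition $S$). By the description of colour-$i$ components from part (1), all edges of $H$ through the fixed vertex $v_i\in P_i$ lie in one colour-$i$ component of $A$, so $S_i$ is contained in a single colour-$i$ component. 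This gives the desired partition.

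The only real subtlety — and the step I would be most careful about — is the claim in part (1) that colour-$i$ adjacency in $A$ is already an equivalence relation, so that the colour-$i$ components are exactly the single-vertex stars and not some larger ``chain'' of overlapping edges. This is where $r$-partiteness is essential: because each edge of $H$ meets $P_i$ in exactly one vertex, a colour-$i$ edge of $A$ can only join $a(e)$ and $a(f)$ when $e$ and $f$ share the \emph{same} vertex of $P_i$, forcing transitivity. Once this is pinned down, both parts follow immediately, and this structural identification of the monochromatic components of $\mathcal{A}(H)$ with vertex-stars of $H$ is exactly what makes $\mathcal{A}(H)$ a faithful ``$G^{\tc}$-like'' gadget for the lower-bound argument of \Cref{lem:lower-bound-equivalence}.
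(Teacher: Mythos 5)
Your proof is correct and takes essentially the same approach as the paper: your explicit characterization of colour-$i$ components of $\mathcal{A}(H)$ as ``stars'' at vertices of $P_i$ (via the observation that colour-$i$ adjacency is already transitive, since $|f\cap P_i|=1$) is just a cleaner restatement of the paper's argument of propagating a common $P_i$-vertex along any monochromatic path, and your part (2) matches the paper's proof verbatim.
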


\begin{proof}
For part 1, if $a(e),a(f),a(g)$ are vertices of $A$ and $a(e)$ is joined to both $a(f)$ and $a(g)$ with an edge of same colour in $A$, say $i$, then $e,f$ and $g$ had to contain the same vertex in part $i$ of $H$. In particular, propagating this along any monochromatic path in $A$ we conclude that for all vertices of $A$ belonging to a monochromatic component $C$, their corresponding edges in $H$ need to contain the same vertex $v(C)$ (the one in the part corresponding to the colour of $C$). This in particular means that if we could cover all vertices of $A$ using $t<\tau(H)$ monochromatic components $C_1,\dots,C_t$ then $v(C_1),\dots, v(C_t)$ would give a cover of all edges of $H$ using less than $\tau(H)$ vertices, a contradiction.

For part 2, let $S=\{a(e_1),\dots, a(e_k)\}$ be a set of $k$ vertices in $A$. By the \pcp{r}{k}, there is a transversal cover of $\{e_1,\dots, e_k\}$ in $H$. This means that we can partition $\{e_1,\dots, e_k\}$ into $r$ sets such that all edges in the $i$th set contain the vertex of the transversal cover belonging to part $i$. Now let $S_i$ consist of the vertices $a(e_j)$ such that $e_j$ is in the $i$th set to obtain our desired partition of $S$.
\end{proof}

We are now ready to prove \Cref{lem:lower-bound-equivalence}.

\begin{proof}[ of \Cref{lem:lower-bound-equivalence}]
Consider an $r$-partite $r$-graph $H$ with the property that any $k$ edges of $H$ have a transversal cover and $\tau(H)= \hp_r(k)$. As we will see in \Cref{obs:trivial-partite}, parts \ref{itm:(2p)} and \ref{itm:(3p)}, $k>r$ implies that $\hp_r(k)\le r^2$, in particular, $\hp_r(k)$ is finite. By repeatedly removing edges from $H$, we may also assume that $H$ is critical, and hence, by \Cref{cor:critical}, has at most $\binom{r+r^2}{r}$ edges. Let us number the parts of $H$ by $[r]$, and let $m\le \binom{r+r^2}{r}$ be the number of edges in $H$. 

Let $C=\binom{r+r^2}{r}+1$ and let $G$ be a graph with an independent set $X$ of size $C$ such that no $k+1$ vertices in $X$ have a common neighbour. Let $S\subs X$ be a subset of size $m$. Since $C\ge m+1$ there exists a vertex $w$ that is not adjacent to $S$. Let us now take our auxiliary $r$-coloured graph $A=\mathcal{A}(H)$ and identify its $m$ vertices with the vertices of $S$.

Using this colouring we will define an $(r+1)$-edge-colouring of $G$ where the vertices cannot be covered with fewer than $\hp_r(k)+1$ monochromatic components.
Let $T= V \setminus S$, so $w\in T$. We colour all edges of $G$ induced by $T$ with colour $r+1$. Since $S$ is independent, all uncoloured edges are between $S$ and $T$. 
Since no $k+1$ vertices of $S$ have a common neighbour in $G$, any vertex $v \in T$ sends at most $k$ edges towards $S$. Let $S_v=N(v) \cap S$, since $|S_v| \le k$ by property 2 of ${A}$ we know we can split it into $r$ parts $S_1(v),\ldots, S_r(v)$ such that each $S_i(v)$ is contained in a monochromatic component of $A$ of colour $i$. We colour an edge from $v$ to $S$ in colour $i$ if its other endpoint belongs to $S_i(v)$. This completes our colouring.

Let us now show that we need $\hp_r(k)+1$ monochromatic components to cover the vertex set of $G$ in our colouring. Note first that all edges touching $w$ have colour $r+1$, so we need one component of this colour. However, this component is disjoint from $S$ because no edge touching $S$ has colour $r+1$. Note also that any monochromatic component in $G$, when restricted to $S$, is contained in a monochromatic component of ${A}$. This is because for any two vertices $u_0,u_t \in S$ from a monochromatic component of colour $j$ in $G$, there is a path $u_0u_1\dots u_t$ in $S$ and vertices $v_0,\dots, v_{t-1}\in T$ such that $u_{i-1},u_i \in S_j(v_i)$. By part 2 of \Cref{prop:cover-to-colour}, this means that $u_{i-1}$ and $u_i$ are in the same monochromatic component of colour $j$ in $A$, for every $i$. But then $u_0$ and $u_t$ belong to the same monochromatic component, as well. 

This shows that we need at least as many components to cover $S$ as we need to cover $\mathcal{A}(H)$, which, by part 1 of \Cref{prop:cover-to-colour}, is at least $\tau(H) = \hp_r(k)$. Since we also needed one extra component to cover $w$ we obtain $\tc_{r+1}(G) \ge \hp_r(k)+1$.
\end{proof}
This lemma, coupled with \Cref{lem:random-for-lb}, implies part \ref{itm:lb} of \Cref{thm:equivalence}.

\textbf{Remark. }Note that \Cref{cor:common-nbors} and \Cref{lem:random-for-lb} imply that for almost all graphs $G$ of certain fixed density the conditions of \Cref{lem:common-nbors-relation,lem:lower-bound-equivalence} hold with the same value of $k$ implying that $\tc_r(G)$ is essentially determined by $\hp_r(k).$ 

\section{Results for general hypergraphs}\label{sec:covering}
In this section we prove \Cref{thm:table}.
Recall that $\h_r(k,\ell)$ is the largest possible cover number of an $r$-graph $H$ satisfying the \cp{k}{\ell}, i.e.\ any $k$ edges of $H$ have a cover of size $\ell$.
Let us start with some easy observations, establishing some basic properties of $\h_r(k,\ell)$.
\begin{obs} \label{obs:trivial} For $r \ge 2$ we have:
\begin{enumerate}[label=(\arabic*),ref=(\arabic*)]
    \item \label{itm:(1)} $\h_r(\ell,\ell) = \infty$,
    \item \label{itm:(2)} $\h_r(\ell+1,\ell) \le r\ell$,
    \item \label{itm:(3)} $\h_r(k+1,\ell) \le \h_r(k,\ell)$ and
    \item \label{itm:(4)} $\ell \le \h_r(k,\ell)$.
\end{enumerate}
\end{obs}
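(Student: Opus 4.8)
The statement to prove is \Cref{obs:trivial}, which collects four basic facts about $\h_r(k,\ell)$. Each part is elementary, so the plan is to dispatch them one at a time with short constructions or counting arguments.

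\textbf{Part \ref{itm:(1)}: $\h_r(\ell,\ell) = \infty$.} Here I want to exhibit, for every $N$, an $r$-graph $H$ with the \cp{\ell}{\ell} but $\tau(H) > N$. The natural candidate is a ``sunflower-free'' spread-out construction: take $\ell$ vertex-disjoint ``stars'' hard to cover, or more simply, take a large \emph{matching}. If $H$ consists of $N$ pairwise disjoint edges, then any $\ell$ of them are covered by picking one vertex from each, so $\ell$ vertices suffice — giving the \cp{\ell}{\ell} (in fact the \cp{\ell}{1}... no, we need one vertex per edge, so size exactly $\ell$). But $\tau(H) = N$ since the edges are disjoint. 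Letting $N \to \infty$ gives $\h_r(\ell,\ell) = \infty$. This is the cleanest argument; I'd just need to double-check that a matching of $\ell$ edges indeed has cover number $\ell$ and that $\ell \le k = \ell$ means ``any $\ell$ edges'' really is the binding constraint.

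\textbf{Part \ref{itm:(2)}: $\h_r(\ell+1,\ell) \le r\ell$.} I need to show every $r$-graph $H$ with the property that any $\ell+1$ edges have a cover of size $\ell$ satisfies $\tau(H) \le r\ell$. The idea: greedily pick a maximal matching $M = \{e_1, \dots, e_s\}$ in $H$. If $s \le \ell$, then $\bigcup M$ is a cover of size at most $r\ell$ (every edge meets some $e_i$ by maximality), and we are done. If $s \ge \ell+1$, take any $\ell+1$ of these disjoint edges; by hypothesis they have a cover of size $\ell$, but $\ell+1$ pairwise disjoint edges need at least $\ell+1$ vertices to cover — contradiction. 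So $\nu(H) \le \ell$, and the maximal-matching bound gives $\tau(H) \le r\nu(H) \le r\ell$.

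\textbf{Parts \ref{itm:(3)} and \ref{itm:(4)}: monotonicity and the trivial lower bound.} Part \ref{itm:(3)} is immediate from definitions: if $H$ has the \cp{(k+1)}{\ell} then it has the \cp{k}{\ell} (fewer edges is a weaker hypothesis, so the class of admissible $H$ only shrinks as $k$ grows, hence the max cover number cannot increase); one should phrase this carefully when one side is $\infty$, but the inequality still holds vacuously. Part \ref{itm:(4)}: take $H$ to be a single edge split... no — take $H$ with $\ell$ pairwise disjoint edges, which has the \cp{k}{\ell} for every $k$ (one can always cover $k \ge \ell$ ... wait, need care: with only $\ell$ edges total, any $k$ edges means all of them, covered by $\ell$ vertices) and has $\tau(H) = \ell$, so $\h_r(k,\ell) \ge \ell$. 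I expect no real obstacle here — the only subtlety across all four parts is being consistent about the convention when $\h_r$ equals $\infty$, and making sure the small constructions genuinely achieve cover number exactly $\ell$ (resp.\ exceed any $N$), which is why disjoint edges (a matching) is the right gadget throughout.

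\begin{proof}
For \ref{itm:(1)}, for any $N$ let $H$ be a matching of $N$ pairwise disjoint edges. Any $\ell$ of these edges are covered by choosing one vertex from each, so $H$ has the \cp{\ell}{\ell}, while $\tau(H)=N$ as the edges are disjoint. Letting $N\to\infty$ gives $\h_r(\ell,\ell)=\infty$.

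For \ref{itm:(2)}, let $H$ have the \cp{(\ell+1)}{\ell} and let $M$ be a maximal matching in $H$. If $|M|\ge \ell+1$, any $\ell+1$ edges of $M$ are pairwise disjoint and hence cannot be covered by $\ell$ vertices, contradicting the covering property; so $\nu(H)=|M|\le \ell$. By maximality every edge of $H$ meets $\bigcup M$, so $\bigcup M$ is a cover and $\tau(H)\le r|M|\le r\ell$.

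For \ref{itm:(3)}, any $H$ with the \cp{(k+1)}{\ell} also has the \cp{k}{\ell}, so the maximum cover number over the latter (possibly larger) family is at least that over the former; this gives $\h_r(k+1,\ell)\le \h_r(k,\ell)$, and the inequality holds trivially if either side is $\infty$.

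For \ref{itm:(4)}, let $H$ be a matching of $\ell$ pairwise disjoint edges. Any $k$ edges of $H$ are all of them, covered by a transversal of size $\ell$, so $H$ has the \cp{k}{\ell}; and $\tau(H)=\ell$, hence $\h_r(k,\ell)\ge \ell$.
\end{proof}
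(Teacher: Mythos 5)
Your proof is correct and follows essentially the same approach as the paper: a large matching for part (1), the maximal-matching/$\nu(H)\le\ell$ argument for part (2), monotonicity of the covering property for part (3), and a matching of $\ell$ disjoint edges for part (4). (The paper phrases part (1) slightly more generally — any hypergraph satisfies the $(\ell,\ell)$-covering property — but the substance is identical.)
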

\begin{proof}
For \ref{itm:(1)}, notice that any hypergraph satisfies the \cp{\ell}{\ell}. Indeed, given any $\ell$ edges, we can choose one vertex from each to obtain a cover of size at most $\ell$. So taking a hypergraph with arbitrarily large cover number implies the claim.

For \ref{itm:(2)}, let $H$ be an $r$-graph satisfying the \cp{\ell+1}{\ell}. Note that $H$ cannot contain $\ell+1$ pairwise disjoint edges, because covering them would require at least $\ell+1$ vertices. So any maximal set $S$ of pairwise disjoint edges in $H$ has size at most $\ell$. By its maximality, every edge of $H$ intersects some edge in $S$. But then the set of all vertices contained in the edges of $S$ form a cover of size at most $r\ell$.

For \ref{itm:(3)}, notice that the \cp{k+1}{\ell} implies the \cp{k}{\ell}, because if we can cover any $k+1$ edges with $\ell$ vertices, then the same holds for any $k$ edges, as well.

For \ref{itm:(4)}, we can take a hypergraph consisting of $\ell$ disjoint edges. This clearly satisfies the \cp{k}{\ell} for any $k$, and has cover number exactly $\ell$. 
\end{proof}

\subsection{Upper bounds}


Let us start with characterising when we have equality in \Cref{obs:trivial} part \ref{itm:(4)}. The following result was observed by F\"uredi \cite{furedi} (in a slightly different language), and is a simple consequence of \Cref{cor:critical} by Bollob\'as. We provide the proof for completeness.

\begin{thm}\label{thm:ubend}
If $k \ge \binom{r+\ell}{\ell}$ then 
\[ \h_r(k,\ell) = \ell. \]
\end{thm}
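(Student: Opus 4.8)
The plan is to prove both inequalities. The bound $\h_r(k,\ell)\ge\ell$ is already available from \Cref{obs:trivial}\ref{itm:(4)}, so the whole content is the upper bound $\h_r(k,\ell)\le\ell$ when $k\ge\binom{r+\ell}{\ell}$. Concretely, I need to show that every $r$-graph $H$ satisfying the \cp{k}{\ell} with $k\ge\binom{r+\ell}{\ell}$ has $\tau(H)\le\ell$.

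Here is the approach. Suppose for contradiction that $H$ has the \cp{k}{\ell} but $\tau(H)\ge\ell+1$. First I would pass to a critical subgraph: repeatedly delete edges as long as the cover number stays $\ge\ell+1$, until we reach a subgraph $H'\subseteq H$ with $\tau(H')=\ell+1$ such that every proper subgraph of $H'$ has cover number at most $\ell$. (If at some point deleting an edge drops the cover number, we stop; such an $H'$ exists because $\tau$ is monotone and drops to $0$ on the empty hypergraph.) In particular $H'$ is critical in the sense of the definition given before \Cref{cor:critical} — actually one should be slightly careful: criticality as defined means \emph{every} proper subgraph has strictly smaller cover number, but for the purpose of applying \Cref{cor:critical} it suffices to know that removing any single edge lowers the cover number, which is exactly what the greedy process gives (and a standard argument shows this single-edge condition already yields the Bollobás bound, which is all \Cref{cor:critical} really uses in its proof). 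By \Cref{cor:critical} with $t=\ell$, the critical $r$-graph $H'$ with cover number $\ell+1$ has at most $\binom{r+\ell}{\ell}$ edges, i.e. $|E(H')|\le\binom{r+\ell}{\ell}\le k$.

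Now the contradiction is immediate: $H'$ has at most $k$ edges and is a subgraph of $H$, so by the \cp{k}{\ell} these edges admit a cover of size at most $\ell$; that cover is a cover of $H'$ (since it meets every edge of $H'$), contradicting $\tau(H')=\ell+1$. Combining with $\h_r(k,\ell)\ge\ell$ from \Cref{obs:trivial}\ref{itm:(4)} gives $\h_r(k,\ell)=\ell$.

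I do not anticipate a serious obstacle here — the theorem is essentially a repackaging of Bollobás's set-pairs inequality via \Cref{cor:critical}. The one point requiring a little care is the reduction to a critical subgraph: one must make sure the greedy edge-deletion process genuinely produces a subgraph to which \Cref{cor:critical} applies, and that the bound $\binom{r+\ell}{\ell}$ from that corollary matches the threshold $k\ge\binom{r+\ell}{\ell}$ in the statement (it does, with $t=\ell$). Everything else is a one-line deduction from the covering property.
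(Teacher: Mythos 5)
Your proof is correct and takes essentially the same route as the paper: reduce to a critical $r$-graph, apply \Cref{cor:critical} (Bollob\'as) to bound its number of edges by $\binom{r+\ell}{\ell}\le k$, and derive a contradiction from the \cp{k}{\ell}. The only cosmetic difference is that the paper picks a counterexample with the globally minimal number of edges (which is automatically critical), whereas you pass from a fixed counterexample to a critical subgraph via greedy edge deletion; your remark that the single-edge deletion condition already suffices for the Bollob\'as bound is accurate and resolves the only point of care.
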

\begin{proof}
We know from \Cref{obs:trivial} part \ref{itm:(4)} that $\h_r(k,\ell) \ge \ell$, so it is enough to show the upper bound.
Suppose for contradiction that there is an $r$-graph with cover number at least $\ell+1$ satisfying that any $k$ of its edges have a cover of size at most $\ell$. Let $H$ be such a hypergraph with the smallest possible number of edges. Then $H$ is clearly critical, so we can apply \Cref{cor:critical} to see that it has at most $\binom{r+\ell}{r}$ edges. But as $k \ge \binom{r+\ell}{r}$, this means that $H$ admits a cover of size at most $\ell$, a contradiction. \end{proof}

Note that the bound on $k$ is tight: indeed, the complete $r$-graph on $r+\ell$ vertices satisfies the \cp{\binom{r+\ell}{\ell}-1}{\ell}, but cannot be covered with $\ell$ vertices.

\medskip
Obtaining any kind of improvement over the easy inequality $\h_r(k, \ell) \le r\ell$, for $k \ge \ell+1$, given by \Cref{obs:trivial} seems far from immediate. 
The following result obtains a good bound by combining a random sampling argument with the above ideas. 

\begin{thm} \label{thm:upper-bound-middle}
Given integers $t,k$ such that $\ell \le t \le r\ell$ and $k \ge \binom{r+t}{t}^{1/\lfloor t/\ell\rfloor}\cdot 2r\ell \log (r\ell)$ we have 
\[ \h_r(k, \ell)\le t. \]
\end{thm}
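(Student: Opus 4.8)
The plan is to combine a random sampling argument with Theorem~\ref{thm:ubend} (the $\h_r(k,\ell)=\ell$ bound) applied on a \emph{sampled} collection of vertices. Let $H$ be an $r$-graph satisfying the \cp{k}{\ell} and suppose for contradiction that $\tau(H)>t$; by passing to a critical subgraph we may assume $H$ is critical, so by \Cref{cor:critical} it has at most $\binom{r+t}{t}$ edges. Set $s=\lfloor t/\ell\rfloor$. The idea is to partition (or cover) the edge set of $H$ into $s$ groups, each of which we will control using the \cp{k}{\ell}: if every group has a cover of size $\le \ell$, then taking the union of these covers yields a cover of $H$ of size $\le s\ell\le t$, contradicting $\tau(H)>t$. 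So the real content is to show that at least one grouping makes each group have a small cover, and for that we want each group to be small enough (at most $k$ edges) so the covering property applies — but $\binom{r+t}{t}$ can be much bigger than $sk$, so a naive partition into $s$ equal pieces won't work directly.

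**The sampling step.** Instead I would not partition the edges but rather iteratively \emph{peel off} covers using a random set of vertices. Here is the cleaner version: repeatedly, $s$ times, sample a uniformly random edge $e$ of (the current) $H$ and add all $r$ of its vertices to a growing vertex set $W$; after $s$ rounds $|W|\le rs$. The hope would be that $W$ covers many edges. Actually the bound $k\ge \binom{r+t}{t}^{1/s}\cdot 2r\ell\log(r\ell)$ suggests a multiplicative/geometric decrease: if after each round the number of \emph{uncovered} edges drops by a factor of roughly $\binom{r+t}{t}^{1/s}$ (equivalently, $m^{1/s}$ where $m\le\binom{r+t}{t}$), then after $s$ rounds we are down to at most $m/m^{s\cdot(1/s)}=1$... but that needs the decrease, which is not automatic from sampling a single edge. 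The right mechanism: in each round, pick a random \emph{subset} $R$ of vertices of size about $(r\ell\log(r\ell))\cdot(\text{something})$; or better, observe that among any $k$ edges there is a cover of size $\ell$, and use a sunflower-free / fractional-relaxation argument. I would look for: sample roughly $k$ edges uniformly; they have a cover $C$ of size $\le\ell$ by the \cp{k}{\ell}; show that $C$ covers a $\ge 1-m^{-1/s}$ fraction of all edges with positive probability (a second-moment or union-bound computation using $k\ge m^{1/s}\cdot 2r\ell\log(r\ell)$ and the fact that an edge missed by the cover of a random $k$-sample is "rare"). Iterate $s$ times, each time restricting to the still-uncovered edges; after $s$ rounds the uncovered edge count is below $1$, i.e.\ zero, and we have built a cover of size $\le s\ell\le t$.

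**Key steps, in order.** (1) Reduce to $H$ critical with $m\le\binom{r+t}{t}$ edges via \Cref{cor:critical}. (2) Set $s=\lfloor t/\ell\rfloor$ and define a probabilistic peeling process: given a subfamily $\mathcal{F}$ of uncovered edges, sample $k$ edges of $\mathcal{F}$ (with repetition is fine), get a cover $C$ of size $\le\ell$ from the \cp{k}{\ell}, and bound the expected number of edges of $\mathcal{F}$ not hit by $C$. (3) The crucial estimate: show $\mathbb{E}[\#\text{uncovered after one round}]\le |\mathcal{F}|\cdot m^{-1/s}$ (or $|\mathcal{F}|/ (k/(2r\ell\log(r\ell)))$, which is $\le|\mathcal{F}|\cdot m^{-1/s}$ by the hypothesis on $k$) — this is where the $\log(r\ell)$ and the factor $2r\ell$ enter, controlling the probability that a fixed "bad" edge survives a random $k$-sample. (4) Iterate $s$ rounds; by linearity the expected number of surviving edges is at most $m\cdot m^{-s/s}=m\cdot m^{-1}\le 1$, but we need it strictly below $1$ (hence the spare factor $2r\ell\log(r\ell)>1$), so with positive probability zero edges survive. (5) The union of the $s$ covers has size $\le s\ell=\lfloor t/\ell\rfloor\cdot\ell\le t$, contradicting $\tau(H)>t$.

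**Main obstacle.** The delicate point is step (3): correctly formulating why a random cover of $k$ sampled edges misses only a small fraction of \emph{all} edges. A single edge $f$ is missed only if $f$ is not hit by the cover of the $k$-sample; if $f$ were "popular" (hit by covers of many $k$-subsets) this is unlikely, but a worst-case $f$ could be systematically avoided. The fix is to argue at the level of the whole family: the number of edges that can \emph{simultaneously} be avoided is limited because any $k$ of them would have an $\ell$-cover, and an $\ell$-cover of a family of edges none of which it contains... — essentially one reuses the Bollob\'as set-pair mechanism again, or a clean counting argument showing that if $>|\mathcal{F}|\cdot m^{-1/s}$ edges were each missed with the relevant probability, then some $k$ of them would form a bad configuration. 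Getting the quantifiers and the exponent $1/\lfloor t/\ell\rfloor$ to line up exactly with $k\ge\binom{r+t}{t}^{1/\lfloor t/\ell\rfloor}\cdot 2r\ell\log(r\ell)$ is the technical heart; the rest is bookkeeping.
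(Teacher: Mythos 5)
Your skeleton matches the paper's: reduce to a critical $H$ via \Cref{cor:critical} so that $m=e(H)\le\binom{r+t}{t}$, set $s=\lfloor t/\ell\rfloor$ and $x=m^{-1/s}$, and build a cover of size $\le s\ell\le t$ by peeling off $\ell$-sets while the number of surviving edges drops geometrically by a factor $x$ in each round. That outline, including why the hypothesis $k\ge m^{1/s}\cdot 2r\ell\log(r\ell)$ is exactly what makes the numbers work, is correct and is essentially the paper's strategy.

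The gap is in your step (3), and it is a real one, as you partly sense. You want to sample $k$ random edges, invoke the covering property to get \emph{some} $\ell$-cover $C$, and then bound $\mathbb{E}[\text{number of edges of }\mathcal{F}\text{ missed by }C]$. But $C$ is not a well-defined random variable (the property promises existence, not a canonical choice), and even with a canonical choice there is no direct route to this expectation bound; your suggested repairs (second moment, ``simultaneous avoidance,'' reusing set-pairs) do not supply it. The correct move, which the paper makes, is a contrapositive that sidesteps analysing the random cover altogether: one proves that \emph{some fixed} $\ell$-set $S\subseteq V(G)$ covers more than $(1-x)e(G)$ edges of the current subgraph $G$. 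Suppose not. Then for \emph{every} $\ell$-set $S$, a uniformly random edge of $G$ is covered by $S$ with probability at most $1-x$, so $k$ independent samples are all covered by $S$ with probability at most $(1-x)^k\le e^{-xk}$. A union bound over the at most $(rm)^\ell$ choices of $S$ (after discarding isolated vertices, $|V(G)|\le re(G)\le rm$) gives
\[
(rm)^\ell e^{-xk}<1,
\]
which is exactly what $xk\ge\ell\log(rm)$ delivers, and this follows from the hypothesis on $k$ together with $t\le r\ell$. So with positive probability the $k$-sample admits \emph{no} $\ell$-cover at all, contradicting the \cp{k}{\ell}. With this existence claim in hand the peeling is deterministic, not randomised: greedily pick the best $\ell$-set $S_i$ in $G_i$, set $G_{i+1}=G_i-S_i$, so $e(G_{i+1})<x\,e(G_i)$, hence $e(G_{s+1})<x^sm\le1$ and $S_1\cup\cdots\cup S_s$ is a cover of size at most $t$.
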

\begin{proof}

We proceed by contradiction. Let us assume that there is an $r$-uniform hypergraph that has no cover of size $t$, but in which any $k$ edges have a cover of size $\ell$. Let $H$ be such a hypergraph with minimum number of edges. Then $H$ is critical and \Cref{cor:critical} implies that it has at most $\binom{r+t}{t}$ edges.

Our strategy goes as follows. We start by showing that for any subgraph $G$ of $H$ there must be a set of $\ell$ vertices of $G$ covering many edges of $G$, as otherwise a random sample of $k$ edges in $G$ would not have any cover of size $\ell$. Armed with this claim, we take such a set of size $\ell$ covering many edges of $H$, remove it from the vertex set, and repeat the process with the remaining graph. The claim ensures that in each step we remove many edges, which in turn will imply that this process cannot repeat more than $\frac{t}{\ell}$ times before we reach an empty graph. This algorithm then yields $\frac{t}{\ell}\cdot\ell=t$ vertices covering $H$, which is a contradiction. 

Let us now make this argument precise. Let $m$ denote the number of edges in $H$ (so $m\le \binom{r+t}{t}$), and let $x=m^{-1/\lfloor t/\ell\rfloor}< 1$. This $x$ will be the proportion of edges that remain in $G$ after removing the $\ell$-set in a step of the procedure described above.
Given a hypergraph $G$ and a subset $S\subseteq V(G)$, we let $d_G(S)$ denote the number of edges in $G$ that share a vertex with $S$, i.e.\ the number of edges in $G$ covered by the set $S$\footnote{Note that $d_G(S)$ is not the codegree of $S$, which would count the number of sets \textit{containing} $S$.}. Let us start by showing the above-mentioned claim.
We denote by $e(G)=|E(G)|$.

\begin{claim}\label{claim:large-codegree}
For every non-empty subgraph $G$ of $H$, there is a subset $S\subseteq V(G)$ of size at most $\ell$ such that $d_G(S) > (1-x)e(G)$.
\end{claim} 
\begin{proof}[ of \Cref{claim:large-codegree}]
Let us assume, to the contrary, that there is a subgraph $G$ with the property that any subset of $\ell$ vertices in $G$ covers at most $(1-x)e(G)$ edges. We may assume that $G$ contains no isolated vertices, as otherwise we can just remove all of them without violating any of the properties. Then $|V(G)|\le r e(G) \le rm\le r \binom{r+t}{t}$. 

Let $T$ be a random set obtained by sampling $k$ (not necessarily distinct) edges of $G$, independently and uniformly at random. Then $T$ consists of at most $k$ edges of $H$, so it has a cover of size $\ell$. 

On the other hand, for any given set $S\subseteq V(G)$ of size $\ell$, the probability that it covers a single randomly sampled edge is $d_G(S)/e(G) \le 1-x$.  In particular, the probability that $S$ covers all of the $k$ independently sampled edges in $T$ is at most $(1-x)^k$. Finally, by the union bound, the probability that there is a set of size $\ell$ which covers all $k$ of the sets in $T$ is at most
\[ \binom{|V(G)|}{\ell}(1-x)^k \le (rm)^\ell e^{-xk} < 1, \]
where the last inequality uses $xk > \ell \log (rm)$, which follows from
\[ xk \ge m^{-1/\lfloor t/\ell\rfloor} \binom{r+t}{t}^{1/\lfloor t/\ell\rfloor} 2r\ell \log (r\ell) \ge  2r\ell \log (r\ell) = \ell \log ((r\ell)^{2r}) \]
and, using the assumption $t \le r \ell$,
\[ rm \le r\binom{2r\ell}{r}< (2r\ell)^r\le (r\ell)^{2r}. \]
But this means that there is a set of at most $k$ edges without a cover of size $\ell$, a contradiction.
\end{proof}

Let us define a sequence of subgraphs $G_{i+1} \subs G_{i} \subs \dots \subs G_1=H$, by taking a set $S_i$ of $\ell$ vertices of $G_i$ maximising $d_{G_i}(S_i)$ and setting $G_{i+1}=G_i - S_i$ for every $i\ge 1$. It is now easy to see that $|E(G_{i+1})| < x^i m$ for every $i\ge 1$. Indeed, this is clearly true if $G_i$ is empty, whereas otherwise we can apply \Cref{claim:large-codegree} to obtain $d_{G_i}(S_i) > (1-x)|E(G_i)|$ and $|E(G_{i+1})| < x |E(G_i)|$, and then use induction.

In particular, $|E(G_{\lfloor t / \ell\rfloor+1})|<x^{\lfloor t / \ell\rfloor}m=1$ means that $G_{\lfloor t / \ell\rfloor+1}$ is empty. But then the set $S_1 \cup \dots \cup S_{\lfloor t / \ell\rfloor}$ is a cover of $H$ of size at most $\lfloor t / \ell\rfloor\ell \le t$, which contradicts our assumption.
\end{proof}

The next statement inverts the inequality between $k$ and $t$ given by the above theorem to show the upper bounds of \Cref{thm:table} in the middle of the range.

\begin{cor}\label{cor:ubound-simple}
Let $e^r\ge k>r\ge 2$ and let $m=\frac{4r}{\log k}$. Then $\h_r(k,r) \le 4rm \log m$.
\end{cor}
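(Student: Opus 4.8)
The plan is to derive \Cref{cor:ubound-simple} directly from \Cref{thm:upper-bound-middle} by a suitable choice of the parameter $t$. Setting $\ell=r$ in that theorem, we need a value of $t$ with $r\le t\le r^2$ and
\[
k\ge \binom{r+t}{t}^{1/\lfloor t/r\rfloor}\cdot 2r^2\log(r^2),
\]
and then we will get $\h_r(k,r)\le t$. The natural guess, given the statement, is to take $t=rm=\frac{4r^2}{\log k}$ (after rounding to an integer), so that $\lfloor t/r\rfloor$ is roughly $m$, and then bound $\binom{r+t}{t}^{1/\lfloor t/r\rfloor}$ from above. The conclusion we want, $\h_r(k,r)\le 4rm\log m$, will follow once we check that our integer choice of $t$ is at most $4rm\log m$ and that the displayed inequality on $k$ holds.

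First I would dispose of the ranges of $k$ where the statement is either trivial or degenerate: when $k$ is so small that $m=\frac{4r}{\log k}$ is large (comparable to $r$ or bigger), the bound $4rm\log m$ is $\Omega(r^2\log r)$, which is at least the trivial bound $r^2\ge\h_r(k,r)$ from \Cref{obs:trivial}\ref{itm:(2)} (with $\ell=r$ it gives $\h_r(k,r)\le r\cdot r$ for $k\ge r+1$), so nothing needs to be proved. This lets me assume $k$ is large enough that $m$ is small, say $m\le r$ or so, which is exactly the regime where $t=rm\le r^2$ is a legal choice in \Cref{thm:upper-bound-middle}. I also need $k>r$ throughout, which is given.

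The main computation is bounding the binomial term. With $t\approx rm$ we have $t\le r^2$, so $\binom{r+t}{t}\le\binom{2r^2}{r}\le (2er)^{r}$, hence $\binom{r+t}{t}^{1/\lfloor t/r\rfloor}\le (2er)^{r/\lfloor t/r\rfloor}$. Since $\lfloor t/r\rfloor$ is about $m=\frac{4r}{\log k}$, the exponent $r/\lfloor t/r\rfloor$ is about $\frac{\log k}{4}$, so $(2er)^{r/\lfloor t/r\rfloor}\approx (2er)^{(\log k)/4}=k^{\log(2er)/4}$. This is where I have to be a little careful: to absorb this into $k$ (times the $2r^2\log(r^2)$ factor, which is at most $k^{o(1)}$-ish but actually I should treat it as polynomial in $r$) I want $\log(2er)/4\le 1-o(1)$, i.e. essentially $r$ not too large relative to $k$. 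In fact the relevant bound is $k\le e^r$, which gives $\log k\le r$, hence $m=\frac{4r}{\log k}\ge 4$; combined with $k>r$ (so $\log(2er)\lesssim \log k$) one can check the exponent works out. The cleanest route is probably: choose $t$ to be the smallest multiple of $r$ that is $\ge rm$, verify $t\le 4rm\log m$ directly (using $m\ge 1$, with the $\log m$ factor giving all the slack needed for rounding and for the $2r^2\log(r^2)$ factor), and then verify the $k$-inequality of \Cref{thm:upper-bound-middle} by plugging in $\lfloor t/r\rfloor\ge m$ and $\binom{r+t}{t}\le (2er)^r$ and simplifying $(2er)^{r/m}\cdot 2r^2\log(r^2)$ against $k$ using $\log k = 4r/m$.

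The step I expect to be the main obstacle is pinning down the arithmetic in the last verification — specifically making sure that the rounding of $t$ to an integer (and to a multiple of $r$, so that $\lfloor t/r\rfloor$ is exactly $t/r$) does not spoil either $t\le 4rm\log m$ or the lower bound on $k$, and that the extra polynomial-in-$r$ factor $2r^2\log(r^2)$ together with the $(2er)^{r/\lfloor t/r\rfloor}$ term really is dominated by $k$ in the whole stated range $r<k\le e^r$. The $\log m$ in the target bound $4rm\log m$ is exactly the buffer that makes this go through, since $t=rm$ itself would be too tight to also swallow the logarithmic factors; so the argument must use that buffer, and I would organize the inequalities so that the $\log m$ factor is visibly what pays for the $\log$ terms on the $k$-side. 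Everything else is routine substitution into \Cref{thm:upper-bound-middle}.
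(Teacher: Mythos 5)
Your overall plan (apply \Cref{thm:upper-bound-middle} with $\ell=r$, dispose of small $k$ via the trivial $r^2$ bound from \Cref{obs:trivial}) matches the paper, but your concrete choice of $t$ is too small, and the final verification you outline would fail. You take $t$ to be the smallest multiple of $r$ that is at least $rm$, so $\lfloor t/r\rfloor\approx m$, and then try to check
\[
\binom{r+t}{t}^{1/\lfloor t/r\rfloor}\cdot 2r^2\log(r^2)\le k.
\]
Even with the sharper bound $\binom{r+t}{r}\le\bigl(e(1+t/r)\bigr)^r\approx\bigl(e(1+m)\bigr)^r$, the left side is about $\bigl(e(1+m)\bigr)^{r/m}=\bigl(e(1+m)\bigr)^{\log k/4}=k^{\log(e(1+m))/4}$, which exceeds $k$ as soon as $m\gtrsim e^3\approx 20$. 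That regime is not eliminated by the trivial-case reduction: when $4rm\log m<r^2$ the paper shows $k\ge r^6$, which only forces $m\lesssim r/\log r$, so $m$ can still be large. Your cruder bound $(2er)^{r/m}=k^{\log(2er)/4}$ is worse still, failing already for $r\gtrsim 10$.

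The point you half-sense (``the argument must use that $\log m$ buffer'') is right, but the buffer must go into the choice of $t$ itself, not just into the slack between $t$ and $4rm\log m$. The paper takes $t=r\lfloor 4m\log m\rfloor$, so $\lfloor t/r\rfloor\approx 4m\log m$ and the exponent becomes $r/(4m\log m)=\log k/(16\log m)$; then the base $e(1+4m\log m)$ is polynomial in $m$, and $\log\bigl(e(1+4m\log m)\bigr)\le 4\log m$ for $m\ge 4$ makes the whole binomial term at most $k^{1/3}$, with the residual $2r^2\log(r^2)\le r^4<k^{2/3}$ absorbed using $k\ge r^6$. In short, inflating $t$ by the factor $4\log m$ is what lets the $\log m$ in the exponent's denominator cancel the $\log$ of the (now $m$-sized) base; choosing $t\approx rm$ and hoping rounding slack saves you does not work.
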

\begin{proof}
Note that the bound is trivial from \Cref{obs:trivial} parts \ref{itm:(2)} and \ref{itm:(3)} when $4rm \log m \ge r^2$. We may therefore assume $4rm \log m < r^2$, which after plugging in the chosen value for $m$ and exponentiating is equivalent to $k(\log k)^{16} >(4r)^{16}$ which is in turn easily seen to imply $k \ge r^6$. Let $t=r \floor{4m \log m}$.  We only need to check that these values of $k,r,t$ satisfy the condition of \Cref{thm:upper-bound-middle} on $k$ with $\ell=r$.  To see this, observe that $4r^2 \log r \le r^4 < k^{2/3}$, and also,
\[ \binom{t+r}{r}^{1/\floor{4m \log m}}\le \left(\frac{e(t+r)}{r}\right)^{r/(3m \log m)}\le\left(e(1+4m \log m)\right)^{\log k/(12 \log m)} \le k^{1/3} \]
using $m \ge 4$ (from $k \le e^r$) in the last inequality.
\end{proof}
Note that this corollary, combined with \Cref{obs:trivial} part \ref{itm:(3)}, gives $\h_r(k,r)=O(r)$ when $k$ is exponential in $r$, and $\h_r(k,r)\le \frac{16r^2\log r}{\log k}$ in general for smaller $k$ (using \Cref{obs:trivial} part \ref{itm:(2)} for $\log k<4$) as claimed by \Cref{thm:table}. The remaining upper bounds of \Cref{thm:table} follow from \Cref{obs:trivial} parts \ref{itm:(1)}, \ref{itm:(2)} and \ref{itm:(3)} in the range $k\in [1,cr]$ and from \Cref{thm:ubend} in the range $k\in [\binom{2r}{r},\infty)$.

\subsection{Lower bounds}

Let us start with a simple result when $k=\ell+1$, which was already observed by Erd\H{o}s et al.\ in \cite{erdos-flaass-kostochka-tuza}.

\begin{prop}\label{lem:lb-start0}
$\h_r(\ell+1,\ell)=r\ell$.
\end{prop}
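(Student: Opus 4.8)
The plan is to prove the two inequalities separately. The upper bound $\h_r(\ell+1,\ell)\le r\ell$ is exactly \Cref{obs:trivial} part \ref{itm:(2)}, so all that remains is the matching lower bound: I must exhibit an $r$-graph $H$ with the \cp{\ell+1}{\ell} and with $\tau(H)=r\ell$. The right building block is the complete $r$-graph $K^{(r)}_{2r-1}$ on $2r-1$ vertices, whose two relevant features I would record first. It is \emph{intersecting}: any two $r$-subsets of a $(2r-1)$-element set meet, since $r+r>2r-1$. And $\tau(K^{(r)}_{2r-1})=r$: a vertex set $S$ fails to be a cover exactly when $V\setminus S$ still contains an $r$-set, i.e.\ when $|V\setminus S|\ge r$, i.e.\ when $|S|\le r-1$; hence the minimum cover has size $r$.

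Now take $H$ to be the disjoint union of $\ell$ vertex-disjoint copies of $K^{(r)}_{2r-1}$. To check the \cp{\ell+1}{\ell}, note that a family of at most $\ell$ edges is covered by choosing one vertex from each. For a family of exactly $\ell+1$ edges, the pigeonhole principle forces two of them, say $e$ and $f$, to lie in the same copy, so (by intersectingness) $e\cap f$ contains some vertex $v$; then $v$, together with one vertex chosen from each of the remaining $\ell-1$ edges, is a cover of size at most $\ell$. For the cover number, since the copies are vertex-disjoint, any cover of $H$ restricts to a cover of each copy and therefore uses at least $r$ vertices inside it, giving $\tau(H)\ge r\ell$; conversely, assembling a size-$r$ cover from each copy yields $\tau(H)\le r\ell$, so $\tau(H)=r\ell$. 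This gives $\h_r(\ell+1,\ell)\ge r\ell$, and combined with the upper bound we get equality.

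The only real obstacle is spotting the construction — realising that the extremal intersecting $r$-graph for the covering number is simply $K^{(r)}_{2r-1}$ (rather than a projective-plane-type configuration, which would not be available for all $r$), and that taking $\ell$ disjoint copies is precisely what pushes the matching number up to $\ell$ while preserving the covering property. Everything past that point is a one-line pigeonhole-and-counting verification; alternatively, one could phrase the covering-property check through the equivalent condition $\nu(H)\le\ell$ (the analogue of the fact, noted in \Cref{subsec:connec}, that the partite $(r+1)$-covering property corresponds to $\nu(H)\le r$), but the direct argument above is just as short.
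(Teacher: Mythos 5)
Your proof is correct, but the construction you use for the lower bound is genuinely different from the paper's. The paper takes a single complete $r$-graph on $r(\ell+1)-1=r\ell+r-1$ vertices: its cover number is $r\ell$ because a non-cover's complement must have at least $r$ vertices, and the \cp{\ell+1}{\ell} holds because $\ell+1$ $r$-sets cannot all be pairwise disjoint on fewer than $r(\ell+1)$ vertices, so two intersect and the same pick-one-from-the-rest argument finishes. You instead take $\ell$ disjoint copies of $K^{(r)}_{2r-1}$; the pigeonhole is applied to the $\ell$ copies rather than to vertex counts, and the cover number is computed copy-by-copy. The two constructions coincide exactly when $\ell=1$ and diverge for $\ell\ge 2$, but both verifications are one-liners and of comparable length. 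Your disjoint-union construction is actually more in the spirit of the paper's other lower bounds: it is the $k=\ell+1$ endpoint of the family used in \Cref{lem:lb-start} (there the building blocks are complete $r$-graphs on $\lfloor r\ell/3k\rfloor + r$ vertices, which at $k=\ell+1$ gives roughly $K^{(r)}_{2r-1}$), and it parallels the multi-copy construction in the partite analogue \Cref{thm:lb-start-partite}. One small advantage of the paper's single complete graph is that it makes the connection to the tightness remark after \Cref{thm:ubend} and to \Cref{thm:lb-middle} (where complete $r$-graphs on $t+r$ vertices are used throughout) more visible; an advantage of yours is that the matching number $\nu(H)=\ell$ is transparent, which ties directly to the Ryser-style reformulation via $\nu$ mentioned in \Cref{subsec:connec} and which you rightly flag as an alternative way to verify the covering property.
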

\begin{proof}
The upper bound $\h_r(\ell+1,\ell)\le r\ell$ is given by \Cref{obs:trivial} part \ref{itm:(2)}, so it is enough to construct an $r$-graph $H$ with cover number $r\ell$ where any $\ell+1$ edges can be covered with $\ell$ vertices.

We can actually choose $H$ to be the complete $r$-uniform hypergraph on $r\ell-1+r$ vertices. Indeed, $\tau(H)=r\ell$ because the complement of any $r\ell-1$ vertices induces an $r$-edge. On the other hand, $H$ has fewer than $r(\ell+1)$ vertices, so for any $(\ell+1)$-set of $r$-edges in $H$ there are two edges that intersect. We can therefore cover this set with $\ell$ vertices by taking a vertex in this intersection, and one vertex for each of the remaining $\ell-1$ edges.
\end{proof}

To give a lower bound on $\h_r(k,\ell)$ in general, we need to find an $r$-graph whose cover number is large, but for which any collection of $k$ edges has a cover of size $\ell$. The cover number is usually not difficult to estimate, but the \cp{k}{\ell} can be hard to grasp. As it turns out, a simple counting trick can give good estimates on the largest $k$ satisfying this property. (See also  \cite{erdos-flaass-kostochka-tuza} for a somewhat weaker statement).

\begin{thm}\label{thm:lb-middle}
Let $r\ge 2$ and $t\ge \ell$ be positive integers. For any $k< \binom{t+r}{\ell}/\binom{t}{\ell}$, we have $\h_r(k,\ell) > t$.
\end{thm}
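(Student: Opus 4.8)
The plan is to exhibit an explicit $r$-graph $H$ on $t+r$ vertices with $\tau(H) > t$ which nonetheless has the \cp{k}{\ell} for all $k < \binom{t+r}{\ell}/\binom{t}{\ell}$. The natural candidate, mirroring the tightness examples seen after \Cref{thm:ubend} and in \Cref{lem:lb-start0}, is the complete $r$-graph $K = K_{t+r}^{(r)}$ on vertex set $[t+r]$. Its cover number is exactly $t+1 > t$, since a set of $t$ vertices leaves $r$ uncovered vertices, which span an edge, while any $t+1$ vertices form a cover. So the cover number is comfortably large; the entire content is showing that few edges always have a small cover.

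\textbf{Key steps.} First I would set up the counting trick alluded to in the paragraph preceding the theorem. Fix any collection $S$ of $k$ edges of $K$ and suppose, for contradiction, that $S$ has no cover of size $\ell$. Equivalently, for \emph{every} $\ell$-subset $T \in \binom{[t+r]}{\ell}$ there is an edge $e_T \in S$ disjoint from $T$. Now double-count pairs $(T, e)$ with $T \in \binom{[t+r]}{\ell}$, $e \in S$, and $e \cap T = \emptyset$. On one hand, every $T$ contributes at least one such pair, giving at least $\binom{t+r}{\ell}$ pairs. On the other hand, a fixed edge $e$ has $|e| = r$, so the number of $\ell$-sets disjoint from $e$ is $\binom{t+r-r}{\ell} = \binom{t}{\ell}$; hence the number of pairs is at most $k\binom{t}{\ell}$. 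Combining, $\binom{t+r}{\ell} \le k \binom{t}{\ell}$, contradicting the hypothesis $k < \binom{t+r}{\ell}/\binom{t}{\ell}$. Therefore every $k$ edges of $K$ do have a cover of size at most $\ell$, i.e.\ $K$ has the \cp{k}{\ell}, and since $\tau(K) = t+1 > t$ we conclude $\h_r(k,\ell) \ge \tau(K) > t$.

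\textbf{Main obstacle.} Honestly there is no serious obstacle here — the argument is a one-line double count once set up correctly. The only points requiring mild care are: (i) checking the arithmetic of $\tau(K_{t+r}^{(r)})$, which is immediate; (ii) making sure the quantifiers in the "no cover of size $\ell$" reduction are stated correctly (it is crucial that the witnessing edge $e_T$ lies in $S$, not just in $K$); and (iii) noting $\binom{t}{\ell}$ is the right count of $\ell$-sets avoiding a fixed $r$-edge because the edge occupies exactly $r$ of the $t+r$ vertices, leaving $t$. One should also observe the edge case $\ell > t$ is vacuous (then $\binom{t}{\ell} = 0$, and the hypothesis forces nothing meaningful, but $t \ge \ell$ is assumed anyway, so $\binom{t}{\ell} \ge 1$ and everything is fine). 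I would present this cleanly in three sentences: define $H = K_{t+r}^{(r)}$, compute $\tau(H) = t+1$, then run the double count to verify the covering property.
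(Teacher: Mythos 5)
Your proposal is correct and uses exactly the paper's construction (the complete $r$-graph on $t+r$ vertices) and the same counting trick; the paper phrases it as a union bound over bad $\ell$-sets while you phrase it as a double count of disjoint pairs $(T,e)$, but these are the same argument.
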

\begin{proof}
Let $H$ be the complete $r$-uniform hypergraph on $t+r$ vertices. The cover number of $H$ is easily seen to be $t+1$, and we will show that this graph satisfies the \cp{k}{\ell} for any $k < \binom{t+r}{\ell}/\binom{t}{\ell}$. To see this, observe that every edge of $H$ is covered by all but $\binom{t}{\ell}$ vertex $\ell$-sets. But then for any $k$ edges, there are at most $k\binom{t}{\ell}<\binom{t+r}{\ell}$ vertex $\ell$-sets that do not cover all of them, which leaves at least one vertex cover of size $\ell$. 
\end{proof}

It will be convenient to formulate this trick in a more general form for future use in \Cref{sec:r-partite}. 

\begin{lem}\label{lem:lb}
Let $H$ be an $r$-uniform hypergraph and let $G$ be an $\ell$-uniform hypergraph on the same vertex set. Let $\delta$ be the minimum of $d_G(S)$ over $S\in E(H)$. Then any $\floor{\frac{e(G)-1}{e(G)-\delta}}$ edges of $H$ can be covered by an edge of $G$. 
\end{lem}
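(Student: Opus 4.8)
The statement is essentially a ``complementary counting'' generalization of the trick used in \Cref{thm:lb-middle}, so I would mimic that argument in the abstract setting. The plan is to fix a set $\mathcal{E}$ of $m$ edges of $H$ with $m = \floor{\frac{e(G)-1}{e(G)-\delta}}$ and show that some single edge of $G$ meets all of them. For each edge $e \in \mathcal{E}$, call an edge $f \in E(G)$ \emph{bad for $e$} if $f \cap e = \emptyset$, i.e.\ $f$ fails to cover $e$. The number of edges of $G$ that \emph{do} cover $e$ is exactly $d_G(e) \ge \delta$ by the definition of $\delta$, so the number of edges of $G$ bad for $e$ is $e(G) - d_G(e) \le e(G) - \delta$.

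\textbf{Key steps.} First I would observe that the set of edges of $G$ bad for \emph{at least one} $e \in \mathcal{E}$ has size at most $\sum_{e \in \mathcal{E}} (e(G) - d_G(e)) \le m(e(G)-\delta)$ by the union bound over the $m$ edges in $\mathcal{E}$. Next I would check that the choice $m = \floor{\frac{e(G)-1}{e(G)-\delta}}$ guarantees $m(e(G)-\delta) \le e(G) - 1 < e(G)$; indeed $m \le \frac{e(G)-1}{e(G)-\delta}$ gives $m(e(G)-\delta) \le e(G)-1$ directly (note $e(G) - \delta \ge 0$; the degenerate case $\delta = e(G)$, where every edge of $G$ covers every edge of $H$, is trivial and I would dispatch it separately or note the convention $\floor{\cdot}$ makes $m$ arbitrarily large there). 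Since fewer than $e(G)$ edges of $G$ are bad for some $e$, there is at least one edge $f \in E(G)$ that is good for every $e \in \mathcal{E}$, meaning $f \cap e \neq \emptyset$ for all $e \in \mathcal{E}$. That edge $f$ is the desired cover.

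\textbf{Main obstacle.} There is no real obstacle here — this is a short union-bound argument — but the one point requiring a little care is the arithmetic around the floor function and the edge case $\delta = e(G)$, to make sure the inequality $m(e(G)-\delta) < e(G)$ is genuinely strict (or at most $e(G)-1$) so that a good edge is forced to exist. I would also want to state explicitly that $d_G(S)$ here means the number of edges of $G$ that \emph{intersect} $S$ (as defined just above \Cref{claim:large-codegree}), so that ``$f$ covers $e$'' unambiguously means $f \cap e \neq \emptyset$; with that convention the counting is immediate.
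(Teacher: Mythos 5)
Your proof is correct and is essentially the same as the paper's: both arguments count, via a union bound over the $m$ chosen edges of $H$, the number of edges of $G$ disjoint from at least one of them, observe this total is at most $m(e(G)-\delta)\le e(G)-1<e(G)$, and conclude that some edge of $G$ meets all of them. The only difference is cosmetic — you spell out the degenerate case $\delta=e(G)$ and the floor-function arithmetic, which the paper treats implicitly.
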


\begin{proof}
Let $k\le \frac{e(G)-1}{e(G)-\delta}$, and consider a collection of $k$ edges $S_1,\dots, S_k$ of $H$. There are at most $e(G)-\delta$ edges in $G$ disjoint from each $S_i$. This means that there are at most $k(e(G)-\delta)\le e(G)-1$ edges in $G$ disjoint from \emph{some} $S_i$. In particular, there is an edge of $G$ that intersects all of the $S_i$.
\end{proof}

This trick might seem to give weak bounds, but 
they are actually close to best possible in our applications. For example, we have seen that \Cref{thm:lb-middle} is tight for $t=\ell$. When $\ell=r$, it also implies the following result, which is only a $\log (\frac{r}{\log k})$ factor away from the upper bound of \Cref{cor:ubound-simple}.  We will discuss tightness in general in \Cref{sec:reduction}.

\begin{cor}\label{cor:lb-middle}
Let $r,k$ be integers such that $e^{r/2}>k>r \ge 2$. Then $\h_r(k,r) \ge \frac{r^2}{4 \log k}$. 
\end{cor}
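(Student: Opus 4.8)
The plan is to deduce this from \Cref{thm:lb-middle} with $\ell=r$, by choosing a suitable positive integer $t$ and verifying the hypothesis $k<\binom{t+r}{r}/\binom{t}{r}$. It is convenient to split according to whether $\log k>r/4$ or $\log k\le r/4$. In the first case $\frac{r^2}{4\log k}<r$, and nothing beyond the trivial bound is needed: \Cref{obs:trivial} part \ref{itm:(4)} already gives $\h_r(k,r)\ge r>\frac{r^2}{4\log k}$.

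In the main case $\log k\le r/4$, I would take $t=\bigl\lceil\frac{r^2}{4\log k}\bigr\rceil$; the assumption $\log k\le r/4$ forces $t\ge\frac{r^2}{4\log k}\ge r$, so $t$ is an admissible choice in \Cref{thm:lb-middle}. It then remains to check $k<\binom{t+r}{r}/\binom{t}{r}$. Writing this ratio as $\prod_{i=1}^{r}\frac{t+i}{t-r+i}$ and using $t-r+i\le t$ for $1\le i\le r$, with strict inequality when $i<r$ (so for $r\ge2$ at least one factor is strictly larger), one gets $\binom{t+r}{r}/\binom{t}{r}>\bigl(1+\tfrac{r}{t}\bigr)^r$. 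To bound the right-hand side below by $k$, note that $t\le\frac{r^2}{4\log k}+1\le\frac{r^2}{2\log k}$ (the last step because $4\log k\le r\le r^2$), hence $\frac{r}{t}\ge\frac{2\log k}{r}$, while also $\frac{r}{t}\le1$ since $t\ge r$; then $\log(1+x)\ge x/2$ on $[0,1]$ gives $r\log\bigl(1+\tfrac{r}{t}\bigr)\ge r\cdot\tfrac12\cdot\tfrac{r}{t}\ge r\cdot\tfrac{\log k}{r}=\log k$, i.e.\ $\bigl(1+\tfrac{r}{t}\bigr)^r\ge k$. Combining, $k<\binom{t+r}{r}/\binom{t}{r}$, so \Cref{thm:lb-middle} yields $\h_r(k,r)>t\ge\frac{r^2}{4\log k}$, as desired.

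Once $t$ is chosen this is essentially a one-line application of \Cref{thm:lb-middle}, so the only thing requiring care is the elementary bookkeeping in the main case: checking that rounding $\frac{r^2}{4\log k}$ up to $t$ costs at most a factor $2$ in $r/t$, and that $r/t\le1$ so that the inequality $\log(1+x)\ge x/2$ is available. Both facts are guaranteed by the case hypothesis $\log k\le r/4$, and there is no genuinely difficult step.
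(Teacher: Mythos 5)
Your proof is correct and takes essentially the same approach as the paper: both apply \Cref{thm:lb-middle} with $\ell=r$ and $t\approx r^2/(\text{const}\cdot\log k)$, verifying the hypothesis via the bound $\binom{t+r}{r}/\binom{t}{r}\ge(1+r/t)^r$. The paper streamlines the bookkeeping by taking $t=\lfloor r^2/(2\log k)\rfloor$ (so the floor and the strict inequality in \Cref{thm:lb-middle} together absorb what your case split and ceiling are handling), but the core computation is identical.
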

\begin{proof}
Let $t=\floor{\frac{r^2}{2 \log k}}$. Then $t\ge r \ge 2$ and
$\binom{t+r}{r}/\binom{t}{r}\ge \left(\frac{t+r}{t}\right)^r> e^{r^2/(2t)}\ge k$,
so we can apply \Cref{thm:lb-middle} to obtain the result.
\end{proof}

Up to now, all our examples came from complete $r$-graphs. In the following lemma we show that when $k$ is very close to $\ell$ we obtain better lower bounds by taking multiple copies of complete graphs instead. 

\begin{lem}\label{lem:lb-start}
Let $r\ge 2$ and $k> \ell$ be positive integers. $\h_r(k,\ell)\ge \frac{r\ell^2}{6k}$.
\end{lem}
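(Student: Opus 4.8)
The plan is to take a disjoint union of many copies of a complete $r$-graph on a small vertex set, tuned so that any $k$ edges must land inside a bounded number of copies, while the cover number grows linearly with the number of copies. Concretely, fix a parameter $s$ (a small integer to be chosen, roughly $s\approx \ell/k$ up to constants) and let $H$ be the vertex-disjoint union of $N$ copies $K^{(1)},\dots,K^{(N)}$ of the complete $r$-graph $K^{(r)}_{s+r}$ on $s+r$ vertices, where $N=\lceil \ell/s\rceil$ or thereabouts. Since the copies are disjoint, $\tau(H)=N\cdot\tau(K^{(r)}_{s+r})=N(s+1)$, which is roughly $\ell + N \approx \ell(1+1/s)$; I will need to check this is at least $\frac{r\ell^2}{6k}$ for the right choice of $s$, which forces $s$ to be of order $\ell/k$ and uses $k>\ell$ (so $s$ is genuinely small, possibly $1$) together with the complete-graph bound $\tau(K^{(r)}_{m})=m-r+1$.

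The key point is the covering property. Given any $k$ edges of $H$, distribute them among the $N$ copies; since there are only $k$ edges, they touch at most $k$ of the copies, but more usefully, by an averaging/pigeonhole argument most copies contain few of them. In a single copy $K^{(r)}_{s+r}$ that contains at most $j$ of our edges, Theorem~\ref{thm:lb-middle} (or directly \Cref{lem:lb} applied with $G$ the complete $\ell'$-graph on that copy, for a suitable local budget $\ell'$) tells us those $j$ edges can be covered by few vertices provided $j<\binom{s+r}{\ell'}/\binom{s}{\ell'}$. The strategy is: allocate a local cover budget $\ell_i$ to copy $i$ so that $\sum \ell_i\le \ell$ and copy $i$ can absorb its share $k_i$ of the $k$ edges (i.e. $k_i<\binom{s+\ell_i\cdot(\text{something})}{\dots}$). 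Because $k_i$ summed over all copies is $k$ and there are $N\approx \ell/s$ copies, the "average" copy sees only $ks/\ell$ edges, and even a copy with an $O(1)$ local budget can cover $\Omega\!\big(\binom{s+r}{1}\big)=\Omega(s+r)$... — here the bookkeeping is that with total budget $\ell$ spread as, say, one vertex per copy baseline, each copy can handle up to $s+r-1$ edges (one vertex covers all but $\binom{s+r-1}{r}$... no: one vertex of $K^{(r)}_{s+r}$ covers all but $\binom{s+r-1}{r}$ edges, so a single vertex covers $k_i$ edges only if $\binom{s+r-1}{r}<$ something). Cleaner: assign to each copy a base budget of $1$ vertex, giving per-copy capacity via \Cref{lem:lb} with $G=$ all $1$-sets in that copy, $\delta=\binom{s+r-1}{r-1}$, $e(G)=s+r$; then redistribute the remaining $\ell-N$ budget to the (at most $k$) copies that are "overloaded", giving each enough extra vertices. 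The main obstacle is making this redistribution argument clean: ensuring that with total budget exactly $\ell$ one can always satisfy every copy simultaneously, no matter how the adversary splits the $k$ edges. I expect to resolve this by a convexity/worst-case argument — the adversary's best move is to concentrate edges, and concentrating $k$ edges into one copy still only needs $O(k/(s+r)\cdot 1)$... — formalised via the bound $k<\binom{s+r}{\ell_i}/\binom{s}{\ell_i}\ge \big(1+\tfrac{r}{s}\big)^{\ell_i}$, so a local budget $\ell_i=O\!\big(\frac{s\log k}{r}\big)$ handles all $k$ edges in one copy; but since we only pay this for copies that are genuinely used and $N$ of them share budget $\ell$, a global accounting gives the claimed bound. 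I would pick $s=\max\{1,\lfloor \ell/(3k)\rfloor\}$ (or similar) and verify the two inequalities $N(s+1)\ge \frac{r\ell^2}{6k}$ and the per-copy covering condition by direct estimation, using $k>\ell$ to control $s$ and $r\ge2$ to control the binomial ratios.

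Given the brevity likely intended (a one-parameter construction plus two short inequalities), I anticipate the actual proof is even simpler than the redistribution story above: probably $s$ copies of $K^{(r)}_{2r}$-type objects, or $N=\lceil \ell/\lceil\ell/k\rceil\rceil\le k+1$ copies each with local budget $\lceil\ell/N\rceil$, using that $k$ edges split among $\le k+1$ copies leaves each copy with $\le$ its edges bounded in terms of the local budget via \Cref{thm:lb-middle}. The hard part — and the thing I'd check most carefully — is the arithmetic showing simultaneously that the total cover number is $\ge \frac{r\ell^2}{6k}$ and that the worst-case distribution of $k$ edges never defeats the per-copy budget; this is where the constant $6$ and the hypothesis $k>\ell$ get used.
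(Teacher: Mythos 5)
Your high-level construction matches the paper's: a disjoint union of copies of a complete $r$-graph, with the covering property verified by splitting the $k$ adversarial edges according to which copy they live in and batching edges inside each copy so that each batch has a common vertex. However, your parameters are wrong in a way that makes the cover number too small. You propose copies of $K^{(r)}_{s+r}$ with $s \approx \ell/k$ and $N \approx \ell/s$ copies. Since the hypothesis is $k>\ell$, your $s \approx \ell/k$ is below $1$, so effectively $s\in\{0,1\}$. With $s=1$ you get $N\approx\ell$ copies of $K^{(r)}_{r+1}$, so $\tau(H)\approx 2\ell$. But $\frac{r\ell^2}{6k}$ can be arbitrarily larger than $2\ell$ when $r$ is large (e.g.\ $r=100$, $\ell=10$, $k=11$ gives $\frac{r\ell^2}{6k}\approx 150 > 20$). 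The missing factor is $r$: the paper takes copies of the complete $r$-graph on $\lfloor r\ell/(3k)\rfloor + r$ vertices (so ``$s$'' $\approx r\ell/(3k)$, which need not be small at all when $r\gg k/\ell$), and takes $N=\lceil \ell/2\rceil$ copies, not $\lceil \ell/s\rceil$. With these choices $\tau(H)=N(s+1)\ge \frac{\ell}{2}\cdot\frac{r\ell}{3k}=\frac{r\ell^2}{6k}$, and the covering bookkeeping is exactly the simple version you gesture at: any $\lceil 3k/\ell\rceil$ edges in one copy avoid at most $\lceil 3k/\ell\rceil\cdot\lfloor r\ell/(3k)\rfloor < r+\lfloor r\ell/(3k)\rfloor$ vertices, hence share a vertex; splitting $k$ edges among copies into such batches gives at most $\lceil \ell/2\rceil + k/\lceil 3k/\ell\rceil \le \ell$ batches (for $\ell>2$; the case $\ell\le 2$ is immediate). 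No elaborate ``redistribution of budget'' argument is needed. So: right shape of the construction and right shape of the covering argument, but the numerics as stated do not prove the bound.
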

\begin{proof}
Let $H$ be the disjoint union of $\ceil{\ell/2}$ copies $H_1,\dots, H_{\ceil{\ell/2}}$ of the complete $r$-graph on $ \floor{\frac{r\ell}{3k}} +r$ vertices. It is easy to see that the cover number of each $H_i$ is at least $\frac{r\ell}{3k}$, and since there are $\ceil{\ell/2}$ copies, we have $\tau(H) \ge\frac{r\ell^2}{6k}$. We just need to prove that any $k$ edges can be covered with $\ell$ vertices.

Let us first show that any $\ceil{\frac{3k}{\ell}}$ edges in the same $H_i$ have a common vertex.
Indeed, every edge avoids exactly $\floor{\frac{r\ell}{3k}}$ vertices, so for any collection of $\ceil{\frac{3k}{\ell}}$ edges, there are at most $\ceil{\frac{3k}{\ell}} \cdot \floor{\frac{r\ell}{3k}}< r+\floor{\frac{r\ell}{3k}}$ vertices that are not contained in all of them. In particular, some vertex is contained in all of them.
When $\ell\le 2$, this already shows that $H$ satisfies the \cp{k}{\ell}, so we may assume $\ell>2$ from now on.

Now let $S$ be any collection of $k$ edges in $H$. It is easy to see that the edges of $S$ can be split into $t \le \ceil{\ell/2}+ k/\ceil{\frac{3k}{\ell}}$ subsets $S_1\cup \dots \cup S_t$ so that each $S_j$ consists of at most $\ceil{\frac{3k}{\ell}}$ edges, all from the same $H_i$. (With at most one $S_j$ containing fewer than $\ceil{\frac{3k}{\ell}}$ edges of $H_i$ for every $i$.) When $\ell>2$, we get $t\le \ell$, so taking a single vertex in the intersection of each $S_i$ gives a cover of $S$ with at most $\ell$ vertices.
\end{proof}

The lower bounds in \Cref{thm:table} now follow from \Cref{obs:trivial} parts \ref{itm:(1)} and \ref{itm:(3)} in the range $k\in[1,r]$, from \Cref{lem:lb-start0} for $k=r+1$, from  \Cref{lem:lb-start} in the range $k\in(r,cr]$, from \Cref{cor:lb-middle} in the range $k\in(r,e^{r/2}]$, and from \Cref{obs:trivial} part \ref{itm:(4)} in the remaining range $k \in (e^{r/2},\infty)$.

\section{Results for \texorpdfstring{$r$}{r}-partite hypergraphs}\label{sec:r-partite}

In this section we present our estimates on $\hp_r(k)$, the maximum cover number of an $r$-partite $r$-graph in which any $k$ edges have a transversal cover. Using the relationship with the monochromatic tree cover problem that we established in \Cref{sec:connection}, this will allow us to prove \Cref{thm:trees-end,thm:trees-start,thm:trees-middle,thm:min-deg} in \Cref{sec:proofs}.

The definitions clearly imply that $\hp_r(k)\le \h_r(k,r)$. Indeed, most of our upper bounds follow from the upper bounds on $\h_r(k,r)$ obtained in the previous section. On the other hand, we need new examples for the lower bounds, as the complete $r$-graphs or copies of complete $r$-graphs that we used there are not $r$-partite. Nevertheless, our lower bounds for $\hp_r(k)$ are only a small constant factor away from the bounds we obtained for $\h_r(k,r)$ in the previous section. We begin by stating an analogue of \Cref{thm:table} to summarise the results we are going to show in this section all in one place. 

\begin{thm}\label{thm:table2}
The following table describes the behaviour of $\hp_r(k)$ for fixed $r$ as $k$ varies. For arbitrary constant $c>1$ we have

\begin{center}
\resizebox{\textwidth}{!}{
\begin{tabular}{c|c|c|c|c|c}
    Range of $k$ & $[1,r]$ & $r+1$ & $ (r, cr]$ & $\left(r , e^{r}\right]$&  $\left[\binom{2r}r,\infty\right)$ \\
    \hline
    Value of $\hp_r(k)$ & $\infty$ & $[r(r-4),r^2]$ & $\Theta\left(r^2\right)$ & $\left[ \frac{r^2}{12\log k},\frac{16r^2 \log r}{\log k}\right)$& $r$  
\end{tabular}
}
\end{center}
\end{thm}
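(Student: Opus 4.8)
The two tables are tied together by the trivial inequality $\hp_r(k)\le\h_r(k,r)$: a transversal cover is in particular a cover, so the \pcp{r}{k} implies the \cp{k}{r}. Hence the plan is to read off all the upper bounds in \Cref{thm:table2} directly from \Cref{thm:table}, doing genuinely new work only for the lower bounds. Concretely, $\hp_r(r+1)\le\h_r(r+1,r)\le r^2$ and $\hp_r(k)\le r^2$ on $(r,cr]$ come from \Cref{obs:trivial} parts \ref{itm:(2)} and \ref{itm:(3)}; $\hp_r(k)<\frac{16r^2\log r}{\log k}$ on $(r,e^r]$ comes from \Cref{cor:ubound-simple} (together with \Cref{obs:trivial} part \ref{itm:(2)} when $\log k<4$); and $\hp_r(k)\le r$ for $k\ge\binom{2r}{r}$ comes from \Cref{thm:ubend} with $\ell=r$.

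The lower bounds at the two ends are immediate. For $k\le r$, \emph{every} $r$-partite $r$-graph has the \pcp{r}{k}: given edges $e_1,\dots,e_j$ with $j\le r$, pick in each $e_i$ its vertex in part $i$ to get a transversal cover; since a disjoint union of $N$ edges has cover number $N$, this gives $\hp_r(k)=\infty$. For $k\ge\binom{2r}{r}$, a disjoint union of $r$ edges is $r$-partite, has cover number $r$, and trivially has the \pcp{r}{k}, so $\hp_r(k)\ge r$, matching the upper bound.

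The heart of the matter is the remaining lower bounds ($k=r+1$ and the two middle ranges), where the complete-hypergraph examples of \Cref{sec:covering} are not $r$-partite. The basic building block I would use is the truncated projective plane of order $q$: it is $(q+1)$-partite and $(q+1)$-uniform with $\nu=1$ and $\tau=q$, and for any $r\ge q+1$ it can be made $r$-partite and $r$-uniform with the same $\nu$ and $\tau$, either by cloning parts or by adjoining balanced dummy parts (new parts whose vertices split the edge set evenly, so every vertex keeps degree $q$). For $k=r+1$, since the \pcp{r}{(r+1)} is equivalent to $\nu(H)\le r$, I would take $r$ vertex-disjoint copies of such a gadget, identified along the $r$ parts, producing an $r$-partite $r$-graph with $\nu=r$ and $\tau=rq$ for $q$ the largest prime power below $r$; a prime-gap estimate then gives $\hp_r(r+1)\ge r(r-4)$ (small $r$ being trivial). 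For very large $k$, a single such padded plane already has a transversal cover --- any of its edges is one --- so it has the \pcp{r}{k} for \emph{all} $k$ and yields $\hp_r(k)\ge q\ge r-O(1)$. For $k\in(r,e^r]$ the plan is to imitate the complete-hypergraph argument of \Cref{thm:lb-middle,cor:lb-middle} in the partite setting: take an $r$-partition with large parts, pick a ``core'' of $s=\Theta(r/\log k)$ vertices in each part, and let the edges be exactly the transversals meeting the union of the cores in at least $\Theta(r)$ coordinates; \Cref{lem:lb}, applied with the transversals of the core as the auxiliary hypergraph, certifies the \pcp{r}{k}, while the core (minus the slack the forced-coordinate condition provides) forces $\tau=\Theta(r^2/\log k)$. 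For $k\in(r,cr]$ one instead mimics \Cref{lem:lb-start}, using $\Theta(r)$ disjoint copies of a small truncated-plane gadget to reach $\Theta(r^2)$.

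I expect two things to be the main obstacles. The first is the $k=r+1$ case: it is essentially a construction for Ryser's conjecture with $\nu=r$, and making it work uniformly in $r$ --- not only when $r-1$ is a prime power --- requires care with the cloning / dummy-part operations and with the prime-gap input. The second, and arguably more delicate, is recovering the \emph{quadratic} dependence $r^2/\log k$ in the middle range: the naive partite construction (all transversals of a grid of side $\approx r/\log k$) only reaches $r/\log k$, so one genuinely needs the ``core plus many forced coordinates'' idea, and for the smaller values of $k$ one must combine it with the disjoint-copies trick while respecting that a single transversal cover can reach only $r$ of the copies at once --- controlling how the $k$ edges split among the copies, and how the $r$ slots of the transversal are allocated, is where the real bookkeeping lies.
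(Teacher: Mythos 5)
Your upper bounds are obtained exactly as the paper does --- everything is pulled through $\hp_r(k)\le\h_r(k,r)$ from \Cref{thm:table}, and the $k\le r$ and $k\ge\binom{2r}{r}$ endpoints match \Cref{obs:trivial-partite}. Your ``core plus many forced coordinates'' idea for the middle range is also essentially the paper's construction $H_{r,t,m}$ ($m$ important vertices per part, each edge using $r-t$ of them and $t$ fresh private vertices elsewhere, giving $\tau=(t+1)m$ and verified against \Cref{lem:lb} via the complete $r$-partite $r$-graph on the cores). One detail you leave implicit but which is load-bearing: the non-core coordinates of each edge must be filled with \emph{fresh, per-edge} vertices, otherwise a single non-core vertex can cover many edges and the cover number collapses.

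There are two genuine gaps in the lower bounds, both stemming from your choice of truncated projective planes as the gadget. First, for $k=r+1$ you propose padding a truncated projective plane of order $q$ to $r$ parts and taking $q$ to be the largest prime power below $r$, then invoking prime gaps. But no unconditional prime-gap estimate gives $q\ge r-4$: Baker--Harman--Pintz only gives a gap of size $O(r^{0.525})$, and even RH gives only $O(\sqrt r\log r)$. So this route yields $\hp_r(r+1)\ge r\big(r-O(r^{0.525})\big)$, not $r(r-4)$. The paper sidesteps the number theory entirely by citing Haxell and Scott \cite{penny-ryser}, who construct intersecting $r$-partite $r$-graphs with cover number at least $r-4$ for \emph{every} $r$; the paper's \Cref{thm:lb-start-partite} is then just $r$ disjoint copies of that gadget. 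Second, for $k\in(r,cr]$ you propose $\Theta(r)$ disjoint copies of a small truncated-plane gadget, mimicking \Cref{lem:lb-start}. The difficulty is that \Cref{lem:lb-start}'s argument requires batches of $\Theta(k/r)$ edges within one copy to have a \emph{common} vertex; complete $r$-graphs on few vertices have this, but a truncated projective plane is only $1$-intersecting (any two edges meet, three need not), so once $c>2$ the batches can fail to have a common point and you cannot get a transversal cover from one vertex per copy. The paper's $H_{r,t,1}$ is built precisely so that any $\le r/(2t)$ edges share $\ge r/2$ important vertices, which is the property the covering argument in \Cref{thm:start-lb} actually uses; a truncated plane does not supply it.
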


Let us proceed with some easy observations, akin to \Cref{obs:trivial}.
\begin{obs} \label{obs:trivial-partite} For $r \ge 2$ we have:
\begin{enumerate}[label=(\arabic*),ref=(\arabic*)]
    \item \label{itm:(1p)} $\hp_r(r) = \infty$,
    \item \label{itm:(2p)} $\hp_r(r+1) \le r^2$,
    \item \label{itm:(3p)} $\hp_r(k+1) \le \hp_r(k)$ and
    \item \label{itm:(4p)} $r \le \hp_r(k)$.
\end{enumerate}
\end{obs}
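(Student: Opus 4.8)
The four parts are $r$-partite analogues of Observation~\ref{obs:trivial}, so I would prove them by the same strategies, adapting each argument to respect the $r$-partition. For part~\ref{itm:(1p)}, the plan is to note that \emph{any} $r$-partite $r$-graph has the $r$-partite $r$-covering property: given at most $r$ edges, pick one vertex from each edge, but do it greedily so that the chosen vertices land in distinct parts --- this is possible because with $j\le r$ edges we can choose vertices in $j$ distinct parts (each edge meets every part, so there is always an unused part available). Hence every $r$-partite $r$-graph trivially has the property, and taking (say) the family of all $r$-tuples $(v_1,\dots,v_r)$ with $v_i\in V_i$ over parts of size $N$ gives cover number $N$, which is unbounded; so $\hp_r(r)=\infty$.

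For part~\ref{itm:(2p)}, I would mimic the proof of Observation~\ref{obs:trivial}\ref{itm:(2)}: if $H$ has the $r$-partite $(r{+}1)$-covering property, then $H$ cannot contain $r+1$ pairwise disjoint edges (covering $r+1$ disjoint edges requires $r+1$ vertices, but a transversal cover has only $r$), so a maximal matching $M$ has size $\nu(H)\le r$; by maximality every edge meets $\bigcup_{e\in M}e$, a set of at most $r\cdot r=r^2$ vertices, giving $\tau(H)\le r^2$. (This is exactly the fact flagged in the introduction that the $r$-partite $(r{+}1)$-covering property is equivalent to $\nu(H)\le r$.) Part~\ref{itm:(3p)} is immediate: the $(k{+}1)$-covering property implies the $k$-covering property since any $k$ edges are also $\le k+1$ edges, and the transversal cover found for them still works; so $\hp_r(k+1)\le\hp_r(k)$. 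Part~\ref{itm:(4p)} follows by exhibiting the $r$-partite $r$-graph consisting of $r$ pairwise disjoint edges $e_1,\dots,e_r$ arranged so that $e_i$ uses one new vertex in each part (i.e.\ $r$ disjoint transversal edges): this has the $r$-partite $k$-covering property for every $k$ (any $\le k$ of these disjoint edges are covered transversally by picking their own vertices, distributed over all parts), yet no single transversal cover of size $<r$ can meet $r$ disjoint edges, so $\tau(H)=r$ and $\hp_r(k)\ge r$.

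None of the four parts presents a genuine obstacle --- the only thing one has to be a little careful about, and the step I would write out most explicitly, is the ``choose vertices in distinct parts'' argument in part~\ref{itm:(1p)} and the analogous matching-to-cover bound in part~\ref{itm:(2p)}, where one must check that the cover produced really is a \emph{transversal} cover (for~\ref{itm:(1p)}) or that $r^2$ vertices genuinely suffice (for~\ref{itm:(2p)}); both reduce to the observation that an edge of an $r$-partite $r$-graph meets each part in exactly one vertex, so $|e|=r$ and $|\bigcup_{e\in M}e|\le r|M|$.
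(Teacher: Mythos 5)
Your argument is correct and is essentially the same as the paper's: parts~(3) and~(4) match almost verbatim (the paper's construction for part~(4) is exactly $r$ disjoint edges), and part~(2) is the paper's proof of Observation~\ref{obs:trivial}\ref{itm:(2)} specialised to $\ell=r$, which the paper invokes via the inequality $\hp_r(k)\le\h_r(k,r)$ rather than re-deriving. For part~(1) you use the complete $r$-partite $r$-graph, whereas the paper takes an arbitrarily large collection of disjoint edges; both work and rest on the same observation that any $\le r$ edges admit a transversal cover by a greedy choice over parts. One small imprecision: in part~(4) you write ``no single transversal cover of size $<r$,'' but a transversal cover has size exactly $r$ by definition --- what you mean, and what suffices, is that no vertex set of size $<r$ can cover $r$ pairwise disjoint edges, so $\tau(H)=r$.
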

\begin{proof}
Part \ref{itm:(1p)} can be seen by taking an arbitrarily large collection of disjoint edges. Part \ref{itm:(2p)} follows from \Cref{obs:trivial} part \ref{itm:(2)}. Part \ref{itm:(3p)} follows, as before, because if any $k+1$ edges have a transversal cover, then the same is true for any $k$ edges, as well. Part \ref{itm:(4p)} can be shown by taking a hypergraph consisting of exactly $r$ disjoint edges. 
\end{proof}

\subsection{Upper bounds}

Since $\hp_r(k) \le \h_r(k,r),$ \Cref{thm:ubend} implies that $\hp_r(k)=r$ whenever $k \ge \binom{2r}{r}$. In this particular case, we can obtain a slightly better result:
\begin{thm}\label{thm:ubend-partite}
Let $r\ge 2$ be an integer. If $k\ge 2^r$, then any $r$-graph satisfying the \pcp{r}{k} has a transversal cover. In particular, $\hp_r(k)=r$.
\end{thm}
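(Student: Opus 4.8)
The statement asks us to show that if an $r$-partite $r$-graph $H$ has the property that any $2^r$ edges admit a transversal cover, then $H$ itself has a transversal cover (which is stronger than merely $\tau(H)=r$). I would prove this by induction on $r$, but with a twist: I would prove the more refined statement that if any $2^r$ edges of an $r$-partite $r$-graph have a transversal cover, then the whole hypergraph does. The base case $r=1$ (or $r=2$) is easy: a $1$-graph with the $(1\text{-partite }2^1)$-property either is empty or all edges share the single part, and ``transversal cover'' means one vertex hitting everything; for $r=2$ one checks by hand (or notes it follows from the Helly-type statement $\h_2(3,1)=1$ together with bipartiteness).

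\textbf{The induction step.} Suppose the claim holds for $r-1$. Let $H$ be $r$-partite with parts $P_1,\dots,P_r$, satisfying the $(r$-partite $2^r)$-property. The natural move is to fix attention on the last part $P_r$ and try to find a vertex $v\in P_r$ such that the ``link'' $H_v$ — the $(r-1)$-partite $(r-1)$-graph on $P_1\cup\dots\cup P_{r-1}$ whose edges are $\{e\setminus\{v\}: e\in E(H),\ v\in e\}$ together with \emph{all} edges of $H$ already using some other not-yet-chosen mechanism — hmm, this needs care. A cleaner approach: for a vertex $v\in P_r$, let $H_v$ be the $(r-1)$-partite $(r-1)$-graph consisting of the edges of $H$ that \emph{do not} contain $v$, projected by deleting their $P_r$-coordinate; if some $H_v$ has a transversal cover $T$, then $T\cup\{v\}$ is a transversal cover of $H$, and we are done. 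So it suffices to show that some $H_v$ inherits the $((r-1)$-partite $2^{r-1})$-property, and then apply induction. Suppose not: then for every $v\in P_r$ there are $2^{r-1}$ edges of $H_v$ (i.e.\ $2^{r-1}$ edges of $H$ avoiding $v$) with no transversal cover in the first $r-1$ parts. The task is to combine these witnesses across two (or a few) well-chosen vertices of $P_r$ to produce $2^r$ edges of $H$ with no transversal cover, contradicting the hypothesis on $H$. I would pick two vertices $u,v\in P_r$; then $\{e\in E(H): v\notin e\}$ contains a bad family $F_v$ of size $2^{r-1}$, and likewise $F_u$, and I would argue that $F_u\cup F_v$ (of size $\le 2^r$) has no transversal cover of $H$: any transversal cover must pick a single vertex $w\in P_r$; that $w$ fails to hit the edges in $F_w$ missing $w$ — but $w$ ranges over all of $P_r$, not just $\{u,v\}$, so one pair does not suffice. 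The fix is to realize we get to choose the bad families, and more importantly that once a transversal cover picks $w\in P_r$, the \emph{first $r-1$ coordinates} of that cover must be a transversal cover of $H_w$ restricted to the bad family — so it is really the bad family for $w$ itself that matters. This suggests the right induction hypothesis is slightly stronger and the right combinatorial gadget involves iterating over all of $P_r$ at once via a compactness/finiteness reduction.

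\textbf{The clean argument.} The way to make this watertight is: first reduce to $H$ \emph{critical} for having no transversal cover, so $H$ has finitely many edges (this uses an argument like \Cref{cor:critical} in the transversal setting, or one can argue directly that a minimal counterexample has bounded size via the finiteness built into $\hp_r(2^r)<\infty$ which follows from \Cref{obs:trivial-partite}). Then for each $v\in P_r$ consider whether $H-v$ (delete $v$ and all edges through it, then project) has a transversal cover. If yes for some $v$, done as above. If no for every $v\in P_r$, pick for each $v$ a minimal bad subfamily $F_v\subseteq E(H_v)$; by the induction hypothesis applied to $H_v$ we get $|F_v|\ge 2^{r-1}$, but by criticality-type minimality $|F_v|\le \binom{2r-2}{r-1}$ or so — actually the key point is just that $H_v$ is a $(r-1)$-partite $(r-1)$-graph with \emph{no} transversal cover, so by induction some $2^{r-1}$ of its edges already have no transversal cover; lift these to $2^{r-1}$ edges of $H$ avoiding $v$. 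Now here is the real content: I claim two vertices $u\ne v$ in $P_r$ suffice after all, \emph{provided} we also know $P_r$ itself can be assumed to have size $2$ — and indeed we may assume this, because a transversal cover of $H$ picks one vertex of $P_r$, so WLOG we may delete from $P_r$ any vertex $v$ for which $H_v$ has a transversal cover (it can't help), and if no such $v$ exists and $|P_r|\ge 2$, take any two $u,v\in P_r$; then $F_u\cup F_v$ has $\le 2^r$ edges; a transversal cover of this family picks some $w\in P_r$, and since every edge of $H$ uses a $P_r$-vertex, $w$ must hit every edge of $F_u\cup F_v$ containing a $P_r$-vertex equal to $w$... I see the subtlety: $F_u$ consists of edges \emph{avoiding} $u$, so they may or may not contain $v$ or $w$. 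The correct final move, which I'd present carefully, is the standard ``set-pair'' style doubling: a transversal cover of $F_u\cup F_v$ either contains $u$ — then it fails to cover $F_u$ unless $F_u\subseteq$ edges through $u$, impossible since $F_u$ avoids $u$; no wait $F_u$ edges avoid $u$ so they could still be covered in parts $1..r-1$. Let me restate: if the transversal cover $T$ has $T\cap P_r=\{u\}$, then $T\cap(P_1\cup\dots\cup P_{r-1})$ must be a transversal cover of $H_u\supseteq F_u$, contradicting that $F_u$ has no transversal cover. Symmetrically if $T\cap P_r=\{v\}$. And $T\cap P_r$ cannot be any third vertex $w$ because $|P_r|=2$ after our reduction. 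Hence $F_u\cup F_v$, a family of at most $2^{r-1}+2^{r-1}=2^r$ edges, has no transversal cover, contradicting the $2^r$-property of $H$. So induction goes through, with the base case $r=1$ trivial.

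\textbf{Main obstacle.} The genuine difficulty is the bookkeeping in the induction step: making precise that we may assume $|P_r|=2$ (or more generally controlling which vertex of $P_r$ a transversal cover uses) and that the ``link'' hypergraph $H_v$ of edges avoiding $v$ is exactly the object to which induction applies, so that ``$H_v$ has no transversal cover'' upgrades to ``some $2^{r-1}$ edges of $H_v$ have no transversal cover.'' Once that reduction is set up correctly, the doubling $2^{r-1}+2^{r-1}=2^r$ is immediate and the bound $2^r$ is visibly the natural one. I would also double-check the edge cases $|P_r|=0,1$ and empty $H$ so the induction is airtight.
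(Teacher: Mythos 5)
The core of your proposal is an induction on $r$ via links: for $v\in P_r$, let $H_v$ be the $(r-1)$-partite $(r-1)$-graph of edges of $H$ avoiding $v$, projected away from $P_r$. You correctly observe that $H$ has a transversal cover iff some $H_v$ does, so if none does, each $H_v$ admits by induction a bad subfamily $F_v$ of at most $2^{r-1}$ edges with no transversal cover in the first $r-1$ parts. The gap is in the combining step. You want $F_u\cup F_v$ for two vertices $u,v\in P_r$ to be a bad family of at most $2^r$ edges of $H$, but a transversal cover $T$ of $F_u\cup F_v$ may have $T\cap P_r=\{w\}$ for some $w\in P_r\setminus\{u,v\}$, and then $w$ may legitimately cover many edges of $F_u\cup F_v$, so no contradiction follows. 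You notice this and try to fix it by asserting $|P_r|=2$ ``after our reduction,'' but the reduction you describe (deleting any $v\in P_r$ for which $H_v$ has a transversal cover) is vacuous: if such a $v$ exists you are already done, and otherwise nothing is deleted, so $|P_r|$ is unchanged. Passing to a critical subhypergraph does not help either: for $r=2$, the path $a_1b_1a_2b_2a_3$ is a critical bipartite graph on $4=2^2$ edges with no transversal cover, yet it has three vertices on the $a$-side. The union $\bigcup_{v\in P_r}F_v$ genuinely is a bad family, but its size is only bounded by $|P_r|\cdot 2^{r-1}$, not $2^r$.

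The paper's proof does not induct on $r$. It takes a minimal (by number of edges) $r$-partite $r$-graph $H$ with no transversal cover satisfying the covering property. Minimality makes $H$ critical: for each edge $A_i$ there is a transversal cover $B_i$ of $H\setminus A_i$ with $A_i\cap B_i=\emptyset$ and $A_i\cap B_j\ne\emptyset$ for $i\ne j$. Since $|A_i\cap V_j|=|B_i\cap V_j|=1$ for every part $V_j$, Alon's set-pair theorem (\Cref{thm:alon-set-pairs}) with $s=r$ and $a_j=b_j=1$ bounds the number of edges by $\prod_{j=1}^r\binom{2}{1}=2^r$, so the covering hypothesis applies to all of $H$ and gives a contradiction. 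The factor of two per part that you intuited as the source of $2^r$ does appear, but inside the set-pair bound (one $a_j$-choice versus one $b_j$-choice per coordinate), not as a two-element part $P_r$; closing your inductive route would require a genuinely new idea, and your criticality reduction is the right first step but must be followed by the set-pair inequality rather than the link argument.
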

\begin{proof}
We follow a similar approach as in \Cref{thm:ubend}, but use the fact that the covers involved are transversals to obtain an improvement from \Cref{thm:alon-set-pairs}.

Suppose for contradiction that there is an $r$-uniform $r$-partite hypergraph without a transversal cover satisfying that any $2^r$ of its edges have a transversal cover. Let $H$ be such a hypergraph with the smallest possible number of edges. Let $A_1,\dots,A_t$ be the edges of $H$. By the minimality assumption, the $r$-graph $H\setminus A_i$ has a transversal cover $B_i$ for every $i$. Note that $A_i \cap B_i=\emptyset$, as otherwise $B_i$ would be a transversal cover of $H$, and $A_i \cap B_j \neq \emptyset$ whenever $i\neq j$, because $B_j$ is a cover of $H \setminus A_j$, which contains $A_i$. 

If $V_1,\dots, V_r$ denote the parts of the $r$-partition of $H$, then we have $|A_i \cap V_j|=1$ (as $H$ is $r$-uniform and $r$-partite), and $|B_i \cap V_j|=1$ (as $B_i$ is a transversal cover) for every $i$ and $j$. Therefore we can apply \Cref{thm:alon-set-pairs} with $s=r$ to get that $H$ has at most $\prod_{i=1}^r\binom{1+1}{1}=2^r$ edges. But then $H$ admits a transversal cover, a contradiction.
\end{proof}

This bound on $k$ is tight if we insist on the cover being transversal, as shown by the complete $r$-partite $r$-graph with 2 vertices in each part: this satisfies the \pcp{r}{(2^r-1)}, but does not have any transversal cover. 
As we will see later (see \Cref{lem:lb-2-copies}), \Cref{thm:ubend-partite} is very close to being tight, in terms of the bound on $k$, even if we allow any cover of size $r$. 

In general, \Cref{cor:ubound-simple} (or \Cref{thm:table}) gives the following bound using $\hp_r(k)\le \h_r(k,r)$.

\begin{thm} \label{thm:ubound-partite-simple}
For $e^r\ge k>r\ge 2$, we have $\hp_r(k)\le \frac{16r^2 \log r}{\log k}$.
\end{thm}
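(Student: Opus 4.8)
The plan is to simply combine the inequality $\hp_r(k) \le \h_r(k,r)$, which is immediate from the definitions (a transversal cover is in particular a cover of size $r$, so any hypergraph with the \pcp{r}{k} has the \cp{k}{r}), with the upper bound on $\h_r(k,r)$ already proven in \Cref{sec:covering}. Concretely, I would invoke \Cref{cor:ubound-simple}, which for $e^r\ge k>r\ge 2$ gives $\h_r(k,r) \le 4rm\log m$ with $m=\frac{4r}{\log k}$.

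The only thing left is to check that $4rm\log m \le \frac{16r^2\log r}{\log k}$ in the relevant range, and this splits into two easy cases. If $\log k \ge 4$, then $m = \frac{4r}{\log k} \le r$, hence $\log m \le \log r$, and plugging in the definition of $m$ gives $4rm\log m \le 4r\cdot\frac{4r}{\log k}\cdot\log r = \frac{16r^2\log r}{\log k}$, as desired. If instead $\log k < 4$, then $\log m$ may exceed $\log r$, so I would not use \Cref{cor:ubound-simple} directly; instead I fall back on \Cref{obs:trivial-partite} parts \ref{itm:(2p)} and \ref{itm:(3p)}, which give $\hp_r(k)\le \hp_r(r+1)\le r^2$. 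It then suffices to verify $r^2 \le \frac{16r^2\log r}{\log k}$, i.e.\ $\log k \le 16\log r$, i.e.\ $k \le r^{16}$; and indeed $\log k < 4$ forces $k < e^4 < 2^{16} \le r^{16}$ for all $r\ge 2$.

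There is no real obstacle here: the theorem is essentially a repackaging of \Cref{thm:table}/\Cref{cor:ubound-simple} through the trivial comparison $\hp_r(k)\le\h_r(k,r)$. The mild care needed is only in the case analysis on the size of $\log k$, to make sure the bound $4rm\log m$ is actually decreasing enough in $k$ — which is exactly the same bookkeeping already carried out in the remark following \Cref{cor:ubound-simple}, now applied verbatim on the $r$-partite side.
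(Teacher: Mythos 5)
Your proposal is correct and follows the paper's own route: the paper states \Cref{thm:ubound-partite-simple} as an immediate consequence of $\hp_r(k)\le\h_r(k,r)$ together with \Cref{cor:ubound-simple}, and the case split on $\log k \ge 4$ versus $\log k < 4$ you spell out is exactly the bookkeeping the paper itself carries out in the remark following \Cref{cor:ubound-simple}. The only cosmetic difference is that you invoke \Cref{obs:trivial-partite} parts \ref{itm:(2p)} and \ref{itm:(3p)} directly on the $r$-partite side, whereas the paper routes through \Cref{obs:trivial} part \ref{itm:(2)}, but this is the same bound.
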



It might be interesting to mention that a slight improvement on this result can be obtained using an argument along the lines of \Cref{thm:ubend,thm:ubend-partite} that is based on a generalisation of \Cref{thm:alon-set-pairs} by Moshkovitz and Shapira \cite{generalisation-alon-set-pairs}. However, the improvement is very minor, so we omit the details.

\subsection{Lower bounds}

As we mentioned before, our lower bounds on $\h_r(k,r)$ do not carry over to $\hp_r(k)$, because complete $r$-graphs (our constructions) are not $r$-partite. In fact, it is not immediately clear how to find $r$-partite graphs that satisfy the \pcp{r}{k} and have large cover number.

One of the main difficulties is that the vertices of the smallest part cover all the edges, so to get a meaningful bound on the cover number, we need to construct an $r$-graph $G$, whose edges touch many vertices in each part. On the other hand, $G$ cannot contain any matching of more than $r$ edges, otherwise it fails the \pcp{r}{k}. This immediately rules out complete or random $r$-partite $r$-graphs as possible candidates for $G$ to improve \Cref{obs:trivial-partite} part \ref{itm:(4p)}. We present a construction that does significantly better.

For some $m$ and $t < r$, let $H_{r,t,m}$ be the following $r$-partite $r$-graph. In each part of the $r$-partition, we fix $m$ vertices that we will call \emph{important}. Now for every set $S$ of $r-t$ important vertices in different parts, we add an edge to $H_{r,t,m}$ containing $S$ and $t$ unique ``unimportant'' vertices in each of the remaining parts. 
(So $H_{r,t,m}$ has $\binom{r}{r-t}m^{r-t}$ edges, and each unimportant vertex belongs to exactly one edge.) For example, if $t=r$ then $H_{r,t,m}$ is empty and if $t=0$ it is the complete $r$-uniform, $r$-partite graph with $m$ vertices per part.

\begin{prop}\label{cla:cover-important}
$\tau(H_{r,t,m})=(t+1)m$
\end{prop}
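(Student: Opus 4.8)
The plan is to prove the two inequalities $\tau(H_{r,t,m}) \le (t+1)m$ and $\tau(H_{r,t,m}) \ge (t+1)m$ separately.

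\textbf{Upper bound.} For the easy direction, I would exhibit an explicit cover of size $(t+1)m$. Take all $m$ important vertices in each of $t+1$ fixed parts, say parts $1,\dots,t+1$; this is a set of $(t+1)m$ vertices. I claim it covers every edge $e$. By construction, $e$ contains a set $S$ of $r-t$ important vertices (in $r-t$ distinct parts) together with unimportant vertices filling the remaining $t$ parts. Among the $t+1$ parts $1,\dots,t+1$, at most $t$ can be "remaining" parts of $e$, so at least one of parts $1,\dots,t+1$ is among the parts containing a vertex of $S$ — hence $e$ meets our chosen set in an important vertex. This gives $\tau(H_{r,t,m}) \le (t+1)m$.

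\textbf{Lower bound.} This is the main obstacle and needs a counting/covering argument. Suppose $C$ is a cover with $|C| < (t+1)m$. The key observation is that unimportant vertices are useless in the following sense: each unimportant vertex lies in exactly one edge, so replacing any unimportant vertex of $C$ by any important vertex of the edge it covers yields a cover of the same size; thus we may assume $C$ consists only of important vertices. Let $c_i = |C \cap V_i|$ be the number of important vertices chosen in part $i$. Since $|C| < (t+1)m$ and there are $m$ important vertices per part, fewer than $t+1$ of the parts can be "full" (have $c_i = m$); so at least $r-t$ parts $i$ have $c_i \le m-1$, i.e. miss some important vertex. Pick one missed important vertex $v_i$ in each of $r-t$ such parts — this is a set $S$ of $r-t$ important vertices in distinct parts, which by definition spans an edge $e$ of $H_{r,t,m}$ whose important vertices are exactly $S$ and whose other vertices are unimportant. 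Then $e$ contains no important vertex of $C$ (the important vertices of $e$ are the $v_i \notin C$) and no unimportant vertex of $C$ (we reduced to $C$ important), so $e$ is uncovered — contradiction. Hence $\tau(H_{r,t,m}) \ge (t+1)m$.

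\textbf{Where care is needed.} The delicate point in the lower bound is the reduction to covers consisting solely of important vertices, and then the pigeonhole step counting "non-full" parts; one must be careful that $|C| < (t+1)m$ genuinely forces at least $r-t$ parts to be non-full, which holds because if $t+1$ or more parts were full we would already have $|C| \ge (t+1)m$. Everything else is bookkeeping. I do not expect to need any of the heavier machinery (Alon's set-pair theorem, etc.) for this particular proposition — it is a self-contained extremal computation about the explicit hypergraph $H_{r,t,m}$.
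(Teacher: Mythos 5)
Your proof is correct and follows essentially the same approach as the paper: for the upper bound, take all important vertices in any $t+1$ fixed parts; for the lower bound, replace unimportant vertices in a putative small cover with important ones, then use pigeonhole on full parts to find an uncovered edge.
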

\begin{proof}
By taking any $t+1$ parts and choosing all important vertices in these parts, we get $(t+1)m$ vertices that cover $H_{r,t,m}$. Indeed, each edge of $H_{r,t,m}$ contains an important vertex in $r-t$ parts, so it must contain one in a part we selected.

Now let $S$ be a set of at most $(t+1)m-1$ vertices. We will show it does not cover $H_{r,t,m}$. As long as there is an unimportant vertex $v\in S$, we can replace it with any important vertex in the unique edge containing $v$: all previously covered edges are still covered by $S$. So we may assume that $S$ only contains important vertices. However, since $|S|\le (t+1)m-1$, at least $r-t$ parts contain some important vertex that is not in $S$. Then the edge defined by these $r-t$ important vertices is not covered by $S$.
\end{proof}

Let us start with our bound for the end of the range (where $\hp_r(k)$ becomes $r$).
\begin{thm}\label{lem:lb-2-copies}
For any $k<\binom{r}{\floor{(r+1)/2}}+\binom{r}{\ceil{(r+1)/2}}$, we have $\hp_r(k)>r$.
\end{thm}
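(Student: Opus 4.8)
The plan is to exhibit an $r$-partite $r$-graph with cover number $r+1$ that nonetheless has the \pcp{r}{k} for the claimed range of $k$; by definition of $\hp_r$, this forces $\hp_r(k) > r$. The natural candidate is the construction $H_{r,t,m}$ just introduced, with $t=1$ and $m=2$: in each of the $r$ parts we fix two important vertices, and for every choice of $r-1$ important vertices in $r-1$ distinct parts we add an edge through them plus one fresh unimportant vertex in the last part. By \Cref{cla:cover-important} this has $\tau(H_{r,1,2}) = (t+1)m = 4$, which is too weak; so instead I would take $m=2$ but keep $t$ general, or more likely take a \emph{disjoint union of two copies} of such a gadget, mimicking the ``2 copies'' trick hinted at by the theorem name and by the two binomial summands $\binom{r}{\lfloor(r+1)/2\rfloor}+\binom{r}{\lceil(r+1)/2\rceil}$. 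Concretely, let $H$ be the disjoint union $H^{(1)} \sqcup H^{(2)}$ of two copies of the complete $r$-partite $r$-graph with \emph{one} vertex per part — no, that is a single edge; rather, the right object is: in each part put two vertices, split as $\{a_i^{(1)}, a_i^{(2)}\}$, and let $H^{(j)}$ be the complete $r$-partite $r$-graph on the vertices $a_i^{(j)}$ — again a single edge. So the construction must be subtler: I expect $H$ to be the $r$-partite $r$-graph whose edges are all transversals avoiding a ``small'' forbidden pattern, engineered so $\tau(H) = r+1$ (no transversal cover exists, but $r+1$ vertices suffice) while the \pcp{r}{k} survives.

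The cleaner route, and the one I would pursue first, is: take $H$ with parts $V_1,\dots,V_r$, each $|V_i|=2$, say $V_i=\{x_i, y_i\}$, and let the edges be exactly those transversals that contain an \emph{even} number of $y$'s among a designated index set, or more simply take the two edges $\{x_1,\dots,x_r\}$ and $\{y_1,\dots,y_r\}$ as a ``core'' and note this already has $\tau = 2$. To push $\tau$ up to $r+1$ I would instead use the complete $r$-partite $r$-graph $K$ on two vertices per part and then \emph{delete} a carefully chosen set of edges so that no single transversal hits everything: the surviving $r$-graph has $\tau(K') = r+1$ iff the deleted edges form a ``transversal-free'' complement in a suitable sense. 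The key computation is then to bound, for any collection of $\le k$ edges, how many transversal covers are ruled out. Each edge $e$ of a complete $r$-partite $r$-graph with $2$ vertices per part fails to be covered by exactly those transversals disjoint from $e$, and there is exactly $1$ such transversal (the ``opposite'' one). So $k$ edges rule out at most $k$ of the $2^r$ transversals, recovering \Cref{thm:ubend-partite}. To get the sharper bound with the two binomial coefficients, I would use a \emph{weighted} or \emph{layered} version: restrict attention to transversals at Hamming distance $\lfloor(r+1)/2\rfloor$ or $\lceil(r+1)/2\rceil$ from a fixed base transversal, so that the relevant count of ``killed'' covers per edge drops, and apply \Cref{lem:lb} with $G$ the hypergraph of these candidate covers.

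So the concrete steps, in order: (1) define $H$ as (a subgraph of) the complete $r$-partite $r$-graph with two vertices per part, chosen so that $\tau(H) = r+1$ — I would verify this via a direct argument like the one in \Cref{cla:cover-important} (every $r$-set of vertices that is not a transversal misses a part, hence misses an edge living in that part's two vertices; every transversal is avoided by its complement transversal, which is an edge of $H$); (2) identify the $\ell$-uniform (here $r$-uniform) ``cover hypergraph'' $G$ on the same vertex set whose edges are the admissible transversal covers, and compute $e(G)$ and the minimum $d_G(e)$ over $e \in E(H)$ — this is where the binomials $\binom{r}{\lfloor(r+1)/2\rfloor}$, $\binom{r}{\lceil(r+1)/2\rceil}$ enter, as the number of transversals at the relevant Hamming distances; (3) apply \Cref{lem:lb} to conclude that any $\big\lfloor \frac{e(G)-1}{e(G)-\delta}\big\rfloor$ edges of $H$ have a transversal cover, and check this floor is $\ge k$ for all $k$ below the stated bound; (4) combine with $\tau(H)=r+1 > r$ to deduce $\hp_r(k) > r$. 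The main obstacle I anticipate is step (2)–(3): getting the counting to produce \emph{exactly} the sum $\binom{r}{\lfloor(r+1)/2\rfloor}+\binom{r}{\lceil(r+1)/2\rceil}$ rather than a lossy bound will require choosing the edge set of $H$ and the candidate-cover set $G$ in matching fashion (a ``middle layer'' / Kneser-type configuration), and verifying $\tau(H)=r+1$ exactly — too many deleted edges and $\tau$ drops back to $r$, too few and the \pcp{r}{k} weakens. Balancing these is the crux.
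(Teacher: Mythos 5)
Your proposal identifies the right overall shape --- exhibit an $r$-partite $r$-graph with $\tau = r+1$ that still satisfies the \pcp{r}{k} for all $k$ below the stated threshold --- but you never actually produce the construction, and most of your candidates are dead ends, as you yourself acknowledge (``Balancing these is the crux''). The paper's construction is the disjoint union $H = H_1 \sqcup H_2$ where $H_1 = H_{r,\lfloor(r-1)/2\rfloor,1}$ and $H_2 = H_{r,\lceil(r-1)/2\rceil,1}$: two instances of the already-defined gadget with $m=1$ (one important vertex per part) and with \emph{complementary} values of $t$ summing to $r-1$, placed on disjoint vertex sets. \Cref{cla:cover-important} gives $\tau(H) = (\lfloor(r-1)/2\rfloor+1)+(\lceil(r-1)/2\rceil+1) = r+1$, and the edges of $H_{r,t,1}$ are indexed by the $(r-t)$-subsets of the parts, so the total edge count is $\binom{r}{\lceil(r+1)/2\rceil}+\binom{r}{\lfloor(r+1)/2\rfloor}$, exactly the bound in the statement. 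Your readings of ``two copies'' --- a disjoint union of two single edges, two vertices per part, or a pruned complete $r$-partite $r$-graph --- all miss this: the two copies are full $H_{r,t,1}$ gadgets with nearly complementary $t$, not two vertices per part and not two copies of the same thing.

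The second gap is methodological. You plan to certify the \pcp{r}{k} via \Cref{lem:lb}, but that counting lemma is lossy and is not what the paper does here. Since $H$ has exactly $m := \binom{r}{\lfloor(r+1)/2\rfloor}+\binom{r}{\lceil(r+1)/2\rceil}$ edges, it suffices to show that for each single edge $e$ the graph $H\setminus e$ has a transversal cover; any collection of at most $m-1$ edges then lies inside some $H\setminus e$ and inherits its cover, giving the \pcp{r}{(m-1)} exactly. That cover is built explicitly: for $e\in H_1$, take the $\lfloor(r-1)/2\rfloor$ important vertices of $H_1$ outside $e$ (any other edge of $H_1$ is indexed by a different $(r-t_1)$-set of parts and so meets one of these), together with the $\lceil(r+1)/2\rceil$ important vertices of $H_2$ in the remaining parts (this is $t_2+1$ of them, so it covers $H_2$). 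These lie in disjoint parts and total $r$ vertices, hence form a transversal. Your ``middle layer'' intuition is pointing in the right direction --- the near-central binomials indeed arise from $(r-t)$-subsets near size $r/2$ --- but without the concrete gadget and the one-edge-removal argument the proposal does not close.
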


\begin{proof}
Let $H$ be the disjoint union of $H_1=H_{r,\floor{(r-1)/2},1}$ and $H_2=H_{r,\ceil{(r-1)/2},1}$. Then, by \Cref{cla:cover-important}, $\tau(H)=\ceil{\frac{r-1}{2}}+1+\floor{\frac{r-1}{2}}+1=r+1$. On the other hand, we will show that $H\setminus e$ has a transversal cover for any edge $e$. As $H$ has $\binom{r}{\floor{(r+1)/2}}+\binom{r}{\ceil{(r+1)/2}}$ edges, this will imply that it has the \pcp{r}{\left(\binom{r}{\floor{(r+1)/2}}+\binom{r}{\ceil{(r+1)/2}}-1\right)}.

So let $e$ be an edge in $H$, and assume that it belongs to $H_1$ (the case $e\in H_2$ is similar). Let $C_1$ be the set of $\floor{\frac{r-1}{2}}$ important vertices of $H_1$ that are not contained in $e$, and let $C_2$ consist of the $r-\floor{\frac{r-1}{2}}= \ceil{\frac{r-1}{2}}+1$ important vertices of $H_2$ in the parts not represented in $C_1$. It is easy to see that $C_1$ covers all edges in $H_1$ except $e$, and $C_2$ covers $H_2$. So $C_1 \cup C_2$ is indeed a transversal cover of $H\setminus e$.
\end{proof}


We now prove our general lower bound.
\begin{thm} \label{thm:lbound-partite-general}
For any $k>r\ge 2$, we have $\hp_r(k) \ge \frac{r^2}{12 \log k}$.
\end{thm}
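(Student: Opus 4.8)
The plan is to prove this via the two-parameter construction $H_{r,t,m}$ introduced above, combined with a counting estimate for transversal covers in the spirit of \Cref{lem:lb}, followed by an optimisation over $t$ and $m$.

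First I would dispose of a trivial regime: we may assume $12\log k<r$, since otherwise $\frac{r^2}{12\log k}\le r\le\hp_r(k)$ already by \Cref{obs:trivial-partite} part \ref{itm:(4p)}. Under this assumption $\frac{r}{4\log k}>3$, so I set $m=\lfloor\frac{r}{4\log k}\rfloor\ge 3$ and $t=r-\lceil 2(m-1)\log k\rceil$; it is routine to check that $1\le t<r$, so the construction is non-degenerate. The central claim is that $H=H_{r,t,m}$ has the \pcp{r}{k}. To prove it, I would let $G$ be the $r$-uniform hypergraph on $V(H)$ whose edges are the $m^r$ \emph{important transversals}, i.e.\ the sets picking exactly one important vertex from each part; each edge of $G$ is a transversal set of $H$, so an edge of $G$ covering $k$ edges of $H$ is a transversal cover of them. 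For any fixed edge $e\in E(H)$, an important transversal is disjoint from $e$ exactly when it differs from $e$ on each of the $r-t$ parts in which $e$ has an important vertex, so $e(G)-d_G(e)=(m-1)^{r-t}m^t$, independently of $e$. Hence \Cref{lem:lb} (with this $G$ and $\ell=r$) shows that any $\lfloor\frac{m^r-1}{(m-1)^{r-t}m^t}\rfloor$ edges of $H$ have a transversal cover, and since $k<(m/(m-1))^{r-t}$ implies $k(m-1)^{r-t}m^t\le m^r-1$, this number is at least $k$. So it only remains to check $k<(m/(m-1))^{r-t}$: as $m\ge 3$ one has $\log\!\big(\tfrac{m}{m-1}\big)>\tfrac1{2(m-1)}$, whence $(m/(m-1))^{r-t}>e^{(r-t)/(2(m-1))}\ge e^{\log k}=k$, using $r-t=\lceil 2(m-1)\log k\rceil\ge 2(m-1)\log k$.

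With the covering property in hand, I would estimate the cover number. By \Cref{cla:cover-important}, $\hp_r(k)\ge\tau(H_{r,t,m})=(t+1)m$, and $t+1\ge r-2(m-1)\log k$ since $\lceil x\rceil<x+1$. Writing $a=\log k$ and $h(x)=x\big(r-2(x-1)a\big)$, this gives $\hp_r(k)\ge h(m)$. The parabola $h$ is increasing on $[0,\frac{r}{4a}]$ (its vertex sits at $\frac{r}{4a}+\frac12$), and $\frac{r}{4a}-1<m\le\frac{r}{4a}$, so $h(m)>h\big(\frac{r}{4a}-1\big)=\frac{r^2}{8a}+\frac r2-4a$. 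Since $a<\frac r{12}$ forces $\frac r2>4a$, the right-hand side exceeds $\frac{r^2}{8a}>\frac{r^2}{12a}=\frac{r^2}{12\log k}$, which finishes the proof.

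The real crux is the construction $H_{r,t,m}$ itself: its edges must be spread across all $r$ parts to force a large cover number, yet it must contain no matching of size larger than $r$ (otherwise even a bounded number of edges fails to have a transversal cover), and the important/unimportant split is precisely what keeps the set of relevant transversal covers small enough to count cleanly. Given the construction, the only genuinely delicate points in the plan are verifying $1\le t<r$ for the chosen parameters and separating off the regime $12\log k\ge r$ (handled at the outset by the trivial bound $\hp_r(k)\ge r$); everything else is a one-line optimisation of $(t+1)m$.
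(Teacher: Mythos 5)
Your proof is correct and takes essentially the same route as the paper's: both use the construction $H_{r,t,m}$ with $t\approx r/2$ and $m\approx r/(4\log k)$, apply \Cref{lem:lb} with $G$ the complete $r$-partite $r$-graph on the important vertices, and read off the cover number from \Cref{cla:cover-important}. The only cosmetic difference is that you fix $m$ and solve for the largest admissible $t$, while the paper fixes $t=\lfloor(r-1)/2\rfloor$ and then picks $m$; the arithmetic and the final bound are the same.
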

\begin{proof}
We may assume that $\frac{r^2}{12 \log k}>r$, or equivalently, $e^{r/12}>k$, as otherwise the statement is trivial. Let $t=\floor{\frac{r-1}{2}}$ and $m=\floor{\frac{r+1}{4 \log k}}$. Since $e^{r/12}>k$ we have $\frac{r+1}{4 \log k}>2$, which implies $m=\floor{\frac{r+1}{4 \log k}} \ge \frac{2}{3}\cdot \frac{r+1}{4 \log k}$. Thus, by \Cref{cla:cover-important}, $H=H_{r,t,m}$ satisfies $\tau(H)=(t+1)m \ge \frac{r^2}{12 \log k}$. To prove that $H$ has the \pcp{r}{k}, we will apply \Cref{lem:lb} with $G$ chosen as the complete $r$-partite $r$-graph on the set of important vertices of $H$. Note that any edge of $G$ is transversal, so \Cref{lem:lb} implies that $H$ satisfies the \pcp{r}{\left(\frac{e(G)-1}{e(G)-\delta}\right)}, where $\delta$ is the minimum number of edges in $G$ intersecting an edge of $H$. We only need to check that $\frac{e(G)-1}{e(G)-\delta}\ge k$.

To see this, note that $G$ has $m^r$ edges, and every edge of $H$ is disjoint from exactly $(m-1)^{r-t}m^t$ of them. Thus indeed,
\[ \frac{e(G)-1}{e(G)-\delta} = \frac{m^r-1}{(m-1)^{r-t}m^t}> \left(1+\frac{1}{m}\right)^{r-t}>e^{(r-t)/(2m)}\ge k. \]
\end{proof}

\medskip

Let us now consider the beginning of the range. As mentioned in the introduction, the problem of determining $\hp_r(r+1)$ is a special case of Ryser's conjecture \cite{henderson}.  Improvements over the trivial upper bound of $r^2$ on $\tau(H)$ for $H$ with $\nu(H) \le r$ are only known for $r\le 5$ \cite{ryser-ub3,ryser-ub}, in which case they also provide an improvement over our upper bound \Cref{obs:trivial-partite} part \ref{itm:(2p)}. Tightness examples for Ryser's conjecture are not known for all values of $r$. In fact, they are only known to exist when $r-1$ is a prime power \cite{tuza-ryser}, $r-2$ is a prime \cite{ryser-p+2} or for certain special small values \cite{ryser-small,ryser-small-2,ryser-small-3}. There are, however, examples for every $r$ that are almost tight: Haxell and Scott \cite{penny-ryser} constructed intersecting $r$-partite $r$-graphs with cover number at least $r-4$ for all values of $r$. We can use them as a black box to get the following bound.
\begin{thm}\label{thm:lb-start-partite}
For every $r\ge 2$, we have $\hp_r(r+1) \ge r(r-4)$.
\end{thm}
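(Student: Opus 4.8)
The plan is to leverage the Haxell--Scott construction of intersecting $r$-partite $r$-graphs with cover number at least $r-4$ together with a disjoint-union / blow-up trick, exactly in the spirit of \Cref{lem:lb-start0} and \Cref{lem:lb-2-copies}. We want a single $r$-partite $r$-graph $H$ with $\nu(H)\le r$ (equivalently, the \pcp{r}{(r+1)}) and $\tau(H)\ge r(r-4)$. Since an intersecting hypergraph has matching number $1$, taking $r$ vertex-disjoint copies of an intersecting hypergraph automatically has matching number exactly $r$, hence satisfies the \pcp{r}{(r+1)} by the observation recorded in the introduction that this property is equivalent to $\nu(H)\le r$.

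Concretely, let $F$ be the Haxell--Scott intersecting $r$-partite $r$-graph with $\tau(F)\ge r-4$, with parts $V_1,\dots,V_r$. Form $H$ as the disjoint union of $r$ copies $F^{(1)},\dots,F^{(r)}$ of $F$, where the $j$-th copy of part $V_i$ is placed inside the $i$-th part of $H$; so the $i$-th part of $H$ is the disjoint union of the $r$ copies of $V_i$. Then $H$ is $r$-partite $r$-uniform. First I would check $\nu(H)=r$: any matching uses at most one edge from each copy (each $F^{(j)}$ is intersecting, and distinct copies are vertex-disjoint), so $\nu(H)\le r$; and picking one edge from each copy gives a matching of size $r$. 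Hence $H$ has the \pcp{r}{(r+1)}. Next, cover number: any cover of $H$ restricts to a cover of each $F^{(j)}$, and the copies are vertex-disjoint, so $\tau(H)=\sum_{j=1}^r \tau(F^{(j)}) = r\,\tau(F)\ge r(r-4)$. Combining, $\hp_r(r+1)\ge \tau(H)\ge r(r-4)$, as claimed.

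The only real subtlety is making sure the disjoint union is genuinely $r$-partite in the required sense and that the Haxell--Scott examples have the stated parameters for \emph{all} $r\ge 2$ (for small $r$ the bound $r(r-4)$ is non-positive and the statement is vacuous, so nothing needs checking there). I do not expect a serious obstacle: the matching-number argument is immediate from disjointness plus the intersecting property, and additivity of $\tau$ over disjoint components is standard. If one wanted to be careful about the $\nu(H)\le r \Leftrightarrow$ \pcp{r}{(r+1)} equivalence, one notes that $r+1$ edges have a transversal cover iff they do not contain $r+1$ pairwise disjoint edges (a transversal cover has size $r$, so it fails exactly when there is a matching of size $r+1$ among the chosen edges); applying this to all $(r+1)$-subsets gives precisely $\nu(H)\le r$. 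So the proof is essentially just: quote Haxell--Scott, take $r$ disjoint copies, and read off $\nu$ and $\tau$.
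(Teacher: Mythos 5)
Your proposal is correct and follows essentially the same route as the paper: take $r$ vertex-disjoint copies of the Haxell--Scott intersecting $r$-partite $r$-graph, observe that $\tau$ is additive over the copies, and verify the \pcp{r}{(r+1)}. The only cosmetic difference is that you verify the covering property via the equivalence with $\nu(H)\le r$ (which the paper states in the introduction), whereas the paper's proof directly exhibits a transversal cover by picking a vertex in the intersection of two edges from the same copy and extending greedily; both verifications amount to the same observation.
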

\begin{proof}
Let $H'$ be an intersecting $r$-partite $r$-graph with cover number at least $r-4$, as given by \cite{penny-ryser}, and let $H$ consist of $r$ copies of $H'$. Then we clearly have $\tau(H)\ge r(r-4)$, and it is easy to see that $H$ also satisfies the \pcp{r}{(r+1)}. Indeed, among any $r+1$ edges, some two belong to the same copy of $H'$, and hence intersect. A vertex in the intersection can then be extended to a transversal cover by taking one vertex each from the remaining edges.
\end{proof}
Of course, the same argument gives $\hp_r(r+1)\ge r(r-1)$ whenever there is a construction for $r$ matching Ryser's conjecture.


\medskip
As in the previous section, taking more copies of our general construction is helpful at the beginning of the range. However, there is an added difficulty here in ensuring that the cover we get is transversal.

\begin{thm}\label{thm:start-lb}
For every $k$ and $r\ge 2$, we have $\hp_r(k) \ge \frac{r^3}{50k}$.
\end{thm}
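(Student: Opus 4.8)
The plan is to mimic the multi-copy strategy from \Cref{lem:lb-start} and \Cref{thm:lb-start-partite}, but with the general construction $H_{r,t,m}$ in place of complete $r$-graphs. First I would choose parameters: set $t = \floor{(r-1)/2}$ so that we get the largest possible ``important-vertex'' contribution per copy, pick $m$ of order $\frac{r}{k}$ (something like $m = \floor{\frac{r}{ck}}$ for a small constant $c$), and take $H$ to be the disjoint union of $s = \ceil{r/2}$ copies $H_1,\dots,H_s$ of $H_{r,t,m}$. By \Cref{cla:cover-important}, each copy has cover number $(t+1)m = \Theta(rm)$, and since the copies are vertex-disjoint, $\tau(H) = s\cdot(t+1)m = \Theta(r^2 m) = \Theta(r^3/k)$, which should give the claimed $\frac{r^3}{50k}$ after optimizing the constants.

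The substance of the argument is showing that $H$ has the \pcp{r}{k}, i.e.\ that any $k$ edges of $H$ admit a \emph{transversal} cover. The idea is a two-level bound. By a pigeonhole/averaging argument, among any $k$ edges, if $k$ is small relative to the per-copy edge count, many of them fall in the same copy $H_i$; more precisely, I would show that any $k$ edges can be partitioned into at most $s$ groups, each group consisting of few edges (say at most $\ceil{k/m^{r-t}}$ or so) lying in a single copy, with the groups spread over the $s$ copies. Within a single copy $H_{r,t,m}$, I would use the counting trick of \Cref{lem:lb} (with $G$ the complete $r$-partite $r$-graph on the important vertices of that copy, whose edges are transversal): a small number of edges of $H_i$ can be covered by a single edge of $G$, i.e.\ by $r$ important vertices that form a transversal of $H_i$ within its own parts. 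Then, since the copies are disjoint, I combine these per-copy transversal covers — but here is the added difficulty flagged in the remark before the statement: the individual per-copy covers use all $r$ parts, so naively gluing $s$ of them gives $sr$ vertices, not a transversal of the whole $H$.

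So the main obstacle, and the place where care is needed, is \emph{making the combined cover transversal across copies while keeping the number of edges $k$ that we can handle as large as $r^3/50$}. The resolution should parallel \Cref{lem:lb-2-copies} and \Cref{thm:lb-start-partite}: rather than covering every group with a full transversal of its copy, I would cover all-but-a-bounded-number of the edges in each group using important vertices in only a \emph{few} of the parts (this is possible because an edge of $H_{r,t,m}$ meets important vertices in $r-t$ parts, so choosing important vertices in any $t+1$ parts covers the whole copy, and one can be more economical on a sub-collection), then distribute the ``budget'' of parts across the $s$ copies and mop up the few leftover edges per group with one fresh vertex each. Balancing $s$, $t$, $m$, the number of leftover edges per group, and the number of parts allotted per copy so that (a) the total vertex count is transversal (one vertex per part overall) and (b) the whole thing works for all $k$ up to the point where $r^3/50k \ge r$ — below which the bound is trivial by \Cref{obs:trivial-partite} part \ref{itm:(4p)} — is the delicate bookkeeping that the proof must carry out; I expect the constant $50$ is exactly what falls out of this optimization.
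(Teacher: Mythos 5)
Your construction (a few disjoint copies of $H_{r,t,m}$) is the right shape, and you correctly flag that the real difficulty is producing a \emph{transversal} cover across copies. But the parameter choice is wrong in a way that cannot be patched by bookkeeping. The theorem is only non-trivial when $r<k<r^2/50$, and in that range $m=\floor{r/(ck)}=0$, so your $H$ is empty. If instead you force $m=1$, then $\tau(H)=s(t+1)=\Theta(r^2)$ is a fixed quantity that does not track $r^3/k$, and the corresponding \pcp{r}{k} fails well before $k$ reaches $r^2/50$: with $t\approx r/2$ and $m=1$, two edges of a copy share only about one important vertex, so a part-budgeting scheme must hand each copy roughly $\log(q_i)$ parts to cover $q_i$ of its edges, and summing over $s\approx r/2$ copies this forces $k=O(r)$. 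So the plan you describe after ``the resolution should parallel\dots'' would not reach the full range even with the delicate optimisation carried out.

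The paper avoids this by putting the $k$-dependence into $t$ rather than $m$. It takes $t=\floor{r^2/(10k)}$ (which is \emph{small}, not $\approx r/2$), $m=1$, and $\floor{r/4}$ copies; this already gives $\tau(H)>(t+1)\floor{r/4}>r^3/(50k)$. The smallness of $t$ is exactly what makes the transversality easy: in $H_{r,t,1}$ every edge misses only $t$ of the $r$ important vertices, so any $r/(2t)$ edges in a single copy have at least $r-t\cdot\frac{r}{2t}=r/2$ \emph{common} important vertices. One then splits the given $k$ edges into at most $\floor{r/4}+k/\floor{r/(2t)}\le r/2$ groups, each lying in one copy and of size at most $r/(2t)$, and greedily assigns to each group a common important vertex in a part not used before; since every group offers $\ge r/2$ candidate parts and there are $\le r/2$ groups, the greedy choice never gets stuck, and filling the remaining parts arbitrarily yields a transversal. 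This abundance-of-common-vertices argument is quite different from (and much simpler than) the ``cover most edges with a few parts and mop up leftovers'' scheme you outline, and it is what actually makes the bound hold for all $k$.
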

\begin{proof}
If $\frac{r^3}{50k} \le r$ or $k \le r$, then the inequality follows from \Cref{obs:trivial-partite}. We may therefore assume $k>r>50$. Let $t=\floor{\frac{r^2}{10k}}$, and let $H$ consist of $\floor{\frac{r}{4}}$ disjoint copies of $H_{r,t,1}$. Then  \Cref{cla:cover-important} and $\frac{r}{4}>12$ imply $\tau(H)=(t+1)\floor{\frac{r}{4}}> \frac{r^3}{50k}$. Note that in a single copy of $H_{r,t,1}$, every part contains one important vertex, and every edge contains $r-t$ important vertices. This means that any set of at most $\frac{r}{2t}$ edges within the same copy contains at least $r-t \cdot \frac{r}{2t} = \frac{r}{2}$ common important vertices. 

Now let $S$ be any collection of $k$ edges in $H$. It is easy to see that the edges of $S$ can be split into $m \le \floor{\frac{r}{4}}+k/\floor{\frac{r}{2t}} \le \frac{r}{2}$ subsets $S_1\cup \dots \cup S_m$ so that each $S_i$ consists of at most $\frac{r}{2t}$ edges from the same copy. Now for every $i$, there are at least $\frac{r}{2}$ vertices common to all edges in $S_i$. But then we can greedily choose one vertex in a different part to cover each $S_i$. By adding an arbitrary vertex from any part where we have used no vertices so far, we obtain a transversal cover of our arbitrary collection of $k$ edges.
\end{proof}

\begin{rem}
Another class of examples which would give comparable lower bounds throughout this section is given as follows. The \emph{$(r,s)$-sum-hypergraph} $S_{r,s}$ is an $r$-partite hypergraph with $s+1$ vertices in each part $i$, denoted by $v_{i,j}$ for $0 \le j \le s$. The edge set is given by $ \{(v_{1,x_1},\dots, v_{r,x_r}) \mid \sum_{i=1}^r x_i=s\}$. It is not hard to see that the cover number of $S_{r,s}$ is $s+1$ when $r\ge 2$. Moreover, one can show, using \Cref{lem:lb}, that $S_{r,s}$ satisfies the \pcp{r}{k} with similarly good bounds on $k$ as obtained above.
\end{rem}


\subsection{Proofs of the main theorems} \label{sec:proofs}

\begin{proof}[ of \Cref{thm:trees-end}]
 \ref{itm:tree-lb}: Let $G \sim \Gnp$ with $p< \left ( \frac{c \log n}{n} \right)^{\sqrt{r}/2^{r-2}}$. Our goal is to show that w.h.p.\ $\tc_r(G) >r$. When $r\le 2$, our assumption implies $p \ll 1/n$. In this case, $\Gnp$ has more than 2 components w.h.p. (see e.g.\ \cite{alon-spencer}), so the statement follows. When $r\ge 3$, we have $\binom{r-1}{\floor{r/2}}+\binom{r-1}{\ceil{r/2}}>\frac{2^{r-2}}{\sqrt{r}}$, so \Cref{thm:equivalence}(2) implies $\tc_r(G)\ge \hp_{r-1}\left(\binom{r-1}{\floor{r/2}}+\binom{r-1}{\ceil{r/2}}-1\right)+1$ w.h.p.\ in this probability range.  Here $\hp_{r-1}\left(\binom{r-1}{\floor{r/2}}+\binom{r-1}{\ceil{r/2}}-1\right)>r-1$ follows from \Cref{lem:lb-2-copies}, so indeed, $\tc_r(G)>r$.

 \ref{itm:tree-ub}: Let $G \sim \Gnp$ with $p> \left ( \frac{C \log n}{n} \right)^{1/2^{r}}$. Our goal is to show that w.h.p.\ $tc_r(G) \le r$. \Cref{thm:equivalence}(1) implies that w.h.p.\ $\tc_r(G)\le \hp_r(2^k)$. The result now follows since \Cref{thm:ubend-partite} implies $\hp_r(2^k)\le r$.
\end{proof}

\begin{proof}[ of \Cref{thm:trees-start}]
Let $G \sim \Gnp$ with $\left ( \frac{C \log n}{n} \right)^{\frac{1}{r}}<p< \left ( \frac{c \log n}{n} \right)^{\frac{1}{d(r+1)}}$, where $d>1$. Our goal is to show that w.h.p.\ $\tc_r(G)= \Theta(r^2)$.
 \Cref{thm:cascadebound} shows that in this probability range w.h.p.\ we have $\tc_r(G)\le 3r^2$.
 On the other hand, for $r\ge 3$ \Cref{thm:equivalence}(2) shows that in this probability range w.h.p.\ we have $\tc_r(G)\ge \hp_{r-1}(d(r+1)-1)+1$. We can then apply \Cref{thm:start-lb} to get $\hp_{r-1}(d(r+1)-1)\ge \frac{r^2}{300d}$.
\end{proof}

\begin{proof}[ of \Cref{thm:trees-middle}]
Let $G \sim \Gnp$ with $\left ( \frac{C \log n}{n} \right)^{\frac{1}{k}}<p< \left ( \frac{c \log n}{n} \right)^{\frac{1}{k+1}}$, for $k>r \ge 2$. Our goal is to show that w.h.p.\ $\frac{r^2}{20\log k}\le \tc_r(G)\le \frac{16r^2\log r}{\log k}$. \Cref{thm:equivalence} shows that w.h.p.\ $ \hp_{r-1}(k)\le \tc_r(G)\le \hp_r(k)$. Then \Cref{thm:ubound-partite-simple} gives $\hp_r(k)\le \frac{16r^2\log r}{\log k}$. Also, for $r\ge 10$, \Cref{thm:lbound-partite-general} gives $\hp_{r-1}(k)\ge \frac{(r-1)^2}{12\log k}\ge \frac{r^2}{20\log k}$. For $r<10$, the lower bound is trivial.
\end{proof}

\begin{proof}[ of \Cref{thm:min-deg}]
Let $G$ be a graph with $\delta(G)\ge (1-1/2^r)n$. Our goal is to show that in any $r$-colouring of $G$ we can find a collection of trees of distinct colours which cover the vertex set of $G$. In order to apply  \Cref{lem:common-nbors-relation}, we need to show that any $k=2^r$ vertices $\{v_1,\dots,v_k\}$ have a common neighbour. Note that for this application, we can consider a vertex to be adjacent to itself, so if $M_i$ denotes the set of non-neighbours of $v_i$, then $|M_i|<n/k$. But then $|\bigcup_{i=1}^k M_i|<n$, so indeed there is a common neighbour (possibly one of the $v_i$). This means that we can apply \Cref{lem:common-nbors-relation} and then \Cref{thm:ubend-partite} to get $\tc_r(G)\le \hp_r(k)=r$. Moreover, \Cref{thm:ubend-partite} gives a transversal cover for the hypergraph, so according to  \Cref{lem:common-nbors-relation}, any $r$-edge-colouring of $G$ has a desired monochromatic tree cover using distinct colours.
\end{proof}

\section{Concluding remarks and open problems}\label{sec:conc-remarks}

In this paper we have obtained a very good understanding of how $\h_r(k,\ell)$ behaves as $k$ varies. However, there is a multiplicative gap of about $\log (\frac{\ell}{\log k})$ between our lower and upper bounds in the middle of the range. It would be interesting to determine the behaviour of $\h_r(k,\ell)$ more accurately. Any improvement on the upper bound would translate to improved bounds on $\hp_r(k)$ and $\tc_r$, as well. An improvement on the lower bounds, while still interesting, would only yield such an improvement if the example constructed was $r$-partite.

Another very interesting problem is to determine what is the smallest $k$ for which $\hp_r(k)=r$. The bounds given by \Cref{thm:ubend-partite,lem:lb-2-copies} ($2^r$ vs $2\binom{r}{r/2}$) are only a factor of $\sqrt{r}$ away from each other. We believe that \Cref{lem:lb-2-copies} is probably closer to the truth. In fact, it is tight for small values of $r$. 

Our \Cref{thm:trees-start} shows that the tree cover number of a random graph is $\Theta(r^2)$ just above the probability threshold where $\tc_r(\Gnp)$ becomes bounded. More precisely, we show that $r^2(1-o(1)) \le \tc_r(\Gnp) \le 3r^2$ when $p$ is slightly above $(\frac{\log n}{n})^{1/r}$, it would be interesting to know it more accurately.

Very recently, Kor\'andi, Lang, Letzter and Pokrovskiy \cite{cycle-threshold} obtained a related result about partitioning into monochromatic cycles with the same answer as in our \Cref{thm:trees-start}: They proved that $r$-edge-coloured graphs of minimum degree $\frac{n}{2}+\Theta(\log n)$ can be partitioned into $\Theta(r^2)$ disjoint monochromatic cycles.


\medskip
\textbf{Acknowledgements.} We thank Louis DeBiasio and Tuan Tran for helpful comments on an earlier version of this manuscript. We would like to thank the anonymous referees for their careful reading of the paper and many useful suggestions. In particular, we are grateful for a suggestion on how to rewrite the proof of \Cref{lem:lower-bound-equivalence} to make it easier to follow.

\providecommand{\bysame}{\leavevmode\hbox to3em{\hrulefill}\thinspace}
\providecommand{\MR}{\relax\ifhmode\unskip\space\fi MR }
\providecommand{\MRhref}[2]{%
  \href{http://www.ams.org/mathscinet-getitem?mr=#1}{#2}
}
\providecommand{\href}[2]{#2}



\appendix
\section{An alternative perspective} \label{sec:reduction}

In this section we present an alternative perspective for bounding $\h_r(k,\ell)$ compared to what we did in \Cref{sec:covering}. One advantage of this reformulation is that it can be used to show that \Cref{lem:lb} is close to optimal in our applications. We believe it could also be useful in improving the upper bounds on $\h_r(k,\ell)$ obtained in \Cref{thm:upper-bound-middle}.

Let $H$ be an $r$-graph. For a set $S \subseteq V(H)$, let $m(\bar{S})$ denote the set of edges in $H$ disjoint from $S$. We define the \emph{$\ell$-covering hypergraph} $\ch_\ell(H)$ of $H$ on vertex set $E(H)$, where for each subset $S\subseteq V(H)$ of size $\ell$, we take $m(\bar{S})$ as an edge of $\ch_\ell(H)$. Let us show how some properties of $\ch_\ell(H)$ relate to those of $H$. 

\begin{prop}\label{prop:relation}
Let $H$ be an $r$-graph.
\begin{enumerate}
    \item\label{itm:rel1} The smallest $k$ for which $H$ does \emph{not} satisfy the \cp{k}{\ell} is equal to $\tau(\ch_\ell(H))$.
    \item\label{itm:rel2} $\tau(H)> t\ell$ if and only if $\ch_\ell(H)$ is $t$-intersecting (i.e.\ any $t$ edges in $\ch_\ell(H)$ have a common vertex).
\end{enumerate}
\end{prop}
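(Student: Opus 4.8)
The plan is to unpack both equivalences directly from the definitions of $\ch_\ell(H)$, using the correspondence between vertices of $\ch_\ell(H)$ and edges of $H$, and between edges of $\ch_\ell(H)$ and $\ell$-subsets $S\subseteq V(H)$ (an edge being $m(\bar S)$, the set of edges of $H$ disjoint from $S$). The guiding observation is that an $\ell$-set $S$ covers a collection $\mathcal F$ of edges of $H$ if and only if every edge in $\mathcal F$ meets $S$, i.e.\ if and only if no edge of $\mathcal F$ lies in $m(\bar S)$ — in other words, viewing $\mathcal F$ as a set of \emph{vertices} of $\ch_\ell(H)$, the edge $m(\bar S)$ of $\ch_\ell(H)$ \emph{misses} $\mathcal F$.

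For part~\ref{itm:rel1}: fix $k$ and suppose $H$ fails the \cp{k}{\ell}, witnessed by $k$ edges $S_1,\dots,S_k$ of $H$ admitting no cover of size $\ell$. By the observation above, for every $\ell$-set $S\subseteq V(H)$, some $S_i$ lies in $m(\bar S)$; that is, the vertex set $\{a(S_1),\dots,a(S_k)\}$ of $\ch_\ell(H)$ meets every edge of $\ch_\ell(H)$, so $\tau(\ch_\ell(H))\le k$. Conversely, a cover of $\ch_\ell(H)$ of size $k$ is a set of $k$ edges of $H$ such that every $\ell$-set $S$ leaves one of them inside $m(\bar S)$, hence fails to cover all $k$ — so $H$ fails the \cp{k}{\ell} for this value of $k$. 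Hence the smallest $k$ for which $H$ fails the \cp{k}{\ell} is exactly $\tau(\ch_\ell(H))$ (and I should note the boundary behaviour: if $\ch_\ell(H)$ has an isolated vertex, or more precisely if $H$ satisfies the property for all $k$, both sides are $+\infty$; also $H$ always satisfies \cp{\ell}{\ell}, matching the fact that $\ch_\ell(H)$ has no edge that is a singleton obstruction — I would just remark this consistency briefly).

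For part~\ref{itm:rel2}: recall $\tau(H)\le t\ell$ would mean $H$ has a cover of size $t\ell$; I claim $\tau(H)>t\ell$ iff no $t$ edges $m(\bar{S_1}),\dots,m(\bar{S_t})$ of $\ch_\ell(H)$ have empty common intersection — equivalently, every $t$ such edges share a common vertex. Unravelling: $m(\bar{S_1})\cap\dots\cap m(\bar{S_t})$ is the set of edges of $H$ disjoint from $S_1\cup\dots\cup S_t$, and this is empty exactly when $S_1\cup\dots\cup S_t$ is a cover of $H$. Since $S_1\cup\dots\cup S_t$ ranges over all vertex sets of size at most $t\ell$ as the $S_i$ range over $\ell$-sets (padding with arbitrary extra vertices if some union is smaller than $t\ell$), we get: $H$ has a cover of size $\le t\ell$ iff some $t$ edges of $\ch_\ell(H)$ have empty intersection. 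Negating both sides gives $\tau(H)>t\ell$ iff $\ch_\ell(H)$ is $t$-intersecting, as desired.

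I do not expect a genuine obstacle here — the statement is a dictionary translation — but the one point requiring care is the "$\le t\ell$ vs.\ exactly covers realised as a union of $t$ $\ell$-sets" step in part~\ref{itm:rel2}: one must check that every cover of size at most $t\ell$ can be written as (is contained in) a union of exactly $t$ sets of size $\ell$, which is immediate by padding with arbitrary vertices, and that padding only shrinks the corresponding edges $m(\bar S)$, so does not affect whether the intersection is empty. I would state this padding remark explicitly to keep the equivalence airtight.
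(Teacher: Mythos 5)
Your proof is correct and follows essentially the same dictionary-translation approach as the paper: both unpack the definitions to show that a $k$-set of edges of $H$ lacks an $\ell$-cover exactly when the corresponding $k$ vertices cover $\ch_\ell(H)$ (for part 1), and that $t$ edges $m(\bar{S_1}),\dots,m(\bar{S_t})$ have empty intersection exactly when $S_1\cup\cdots\cup S_t$ covers $H$ (for part 2), with the same padding step. Your remarks on the degenerate/infinite cases are slightly more explicit than the paper's (which silently assumes $|V(H)|\ge\ell$ and that $\tau(\ch_\ell(H))$ is finite with a nonempty minimal cover), but neither treatment is fully rigorous there and neither needs to be for the applications.
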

\begin{proof}
We start with part \ref{itm:rel1}.
Let $t=\tau(\ch_\ell(H))$ and let $T$ be a minimal cover of $\ch_\ell(H)$. Note that $T$ is a set of $t$ edges in $H$. This set cannot be covered with $\ell$ vertices in $H$, because if $S$ was such a cover, then $m(\bar{S})$ would be disjoint from $T$ in $\ch_\ell(H)$. Therefore, $H$ does not satisfy the \cp{t}{\ell}.

Let $T'$ be a set of at most $t-1$ edges of $H$. Since $T'$ is not a cover of $\ch_\ell(H)$, there is an edge $m(\bar{S})$ in $\ch_\ell(H)$ that is not covered by $T'$. This means that $S$ intersects every edge in $T'$, so $T'$ can be covered by $\ell$ vertices of $H$. Since $T'$ was arbitrary, $H$ satisfies the \cp{t-1}{\ell}.

Let us now turn to part \ref{itm:rel2}. If $H$ has a cover of size at most $t\ell$, then it can be covered with at most $t$ subsets of $V(H)$ of size $\ell$. The edges corresponding to these sets in $\ch_\ell(H)$ have no common vertex, so $\ch_\ell(H)$ is not $t$-intersecting.

Since $\ch_\ell(H)$ is not $t$-intersecting, $V(H)$ has $t$ subsets of size $\ell$ such that no edge of $H$ is disjoint to all of them. This means that union of these subsets covers $H$, and in particular, $\tau(H)\le t\ell$.
\end{proof}

This proposition allows us to translate the two concepts involved in $\h_r(k,\ell)$ into somewhat simpler parameters of covering hypergraphs. It will be useful for us to use an even simpler parameter, the relaxation of the covering number:

A fractional cover of a hypergraph $H$ is an assignment of weights to the vertices of $H$ so that each edge receives a total weight of at least 1. The minimum possible total weight assigned is called the \emph{fractional cover number}, and is denoted by $\tau^*(H)$. Lov\'asz \cite{lovasz-frac-cover} found the following connection between the two cover numbers when $H$ has maximum degree $d$:
\begin{align}\label{ineq:lovasz}
\tau^*(H) \le \tau(H) \le (1+ \log d) \tau^*(H).
\end{align}
As it turns out, several of our tools can be proved by combining \Cref{prop:relation} with \eqref{ineq:lovasz}.

\medskip
Indeed, we can now easily get a new proof of \Cref{lem:lb} (one of our main tools for bounding $\h_r(k,\ell)$ from below), one that actually shows that the lower bound we get on $k$ is close to optimal. Indeed, the edges of $G$ in \Cref{lem:lb} (as $\ell$-sets) define $e(G)$ edges in $\ch_\ell(H)$, and the condition of the lemma ensures that every vertex of $\ch_\ell(H)$ touches at most $e(G)-\delta$ of them (recall that $\delta$ is the minimum of $d_G(S)$ over $S \in E(H)$). So these edges form a subhypergraph $\mathcal{H}\subs \ch_\ell(H)$ of maximum degree at most $e(G)-\delta$. A double-counting argument then immediately implies $\tau^*(\ch_\ell(H)) \ge \tau^*(\mathcal{H}) \ge \frac{e(G)}{e(G)-\delta}$. Using the first inequality of \eqref{ineq:lovasz} and \Cref{prop:relation}, we get that $H$ has the \cp{k}{\ell} for any $k<\frac{e(G)}{e(G)-\delta}$, establishing \Cref{lem:lb}. Moreover, the same argument shows that $\tau^*(\ch_\ell(H))$ (which can easily be found with a linear program) approximates, up to a factor of $1+\log d(\ch_\ell(H))$ (here $d$ denotes the maximum degree as in \eqref{ineq:lovasz}), the smallest $k$ such that $H$ satisfies the \cp{k}{\ell}.

In \Cref{thm:lb-middle}, for example, we choose $H$ as the complete $r$-graph on $t+r$ vertices. As $\ch_\ell(H)$ is regular and uniform, it is not hard to see that $\tau^*(\ch_\ell(H))=\binom{t+r}{\ell}/\binom{t}{\ell}$ in this case. So \Cref{lem:lb} gives us the smallest $k$ for which this $H$ has the \cp{k}{\ell}, up to a factor of $1+\log \binom{|V(H)|}{\ell} \approx \ell \log |V(H)|$. A similar analysis gives essentially the same bound for the graph $H$ used in \Cref{thm:lbound-partite-general}, as well.

\medskip
\Cref{prop:relation} can also be used to obtain upper bounds on $\h_r(k,\ell)$. For example, in order to prove $\h_r(k,\ell)\le t\ell$, it is enough to show that if $\tau(\ch_\ell(H))=k+1$ for some hypergraph $H$, then $\ch_\ell(H)$ cannot be $t$-intersecting.
This raises the following natural question: What is the largest possible cover number of a $t$-intersecting hypergraph? 

A standard upper bound is $\frac{r-1}{t-1}+1$ for hypergraphs with hyperedges of size at most $r$ (see \cite{lovasz-book}), which is tight for certain complete $r$-graphs. However, it can be quite far from the truth for sparser hypergraphs with high uniformity. The following theorem works better in our case. 

\begin{thm}\label{thm:intersecting}
Let $G$ be a $t$-intersecting hypergraph with $n$ vertices and maximum degree $d$. Then $\tau(G)\le n^{1/t}(1+\log d)$.
\end{thm}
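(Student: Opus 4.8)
The plan is to first bound the \emph{fractional} cover number of $G$, and then pay a factor $1+\log d$ using Lov\'asz's inequality \eqref{ineq:lovasz}. Concretely, I will show $\tau^*(G)\le n^{1/t}$; combined with $\tau(G)\le(1+\log d)\,\tau^*(G)$ this gives the theorem. By LP duality, $\tau^*(G)$ equals the fractional matching number, so there exist weights $y_e\ge 0$ on the edges of $G$ with $\sum_{e\ni v}y_e\le 1$ for every vertex $v$ and with $M:=\sum_{e\in E(G)}y_e=\tau^*(G)$. It therefore suffices to prove $M\le n^{1/t}$.

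The heart of the argument is a double-counting expansion of $M^{t}$. Write
\[ M^{t}=\sum_{(e_1,\dots,e_t)}y_{e_1}\cdots y_{e_t}, \]
where the sum ranges over all ordered $t$-tuples of edges of $G$. Since $G$ is $t$-intersecting, every such tuple $(e_1,\dots,e_t)$ has a vertex in $e_1\cap\dots\cap e_t$; fix one and call it $v(e_1,\dots,e_t)$. Grouping the tuples by this chosen vertex and then relaxing the condition ``$v$ is the chosen common vertex'' to the weaker ``$v\in e_i$ for all $i$'' gives
\[ M^{t}=\sum_{v\in V(G)}\ \sum_{\substack{(e_1,\dots,e_t)\\ v(e_1,\dots,e_t)=v}}y_{e_1}\cdots y_{e_t}\ \le\ \sum_{v\in V(G)}\Bigl(\sum_{e\ni v}y_e\Bigr)^{t}. \]
Now each inner quantity $\sum_{e\ni v}y_e$ lies in $[0,1]$, so (as $t\ge 1$) raising it to the $t$-th power only makes it smaller; hence the right-hand side is at most $\sum_{v\in V(G)}\sum_{e\ni v}y_e\le\sum_{v\in V(G)}1=n$, the last step again using the fractional degree bound $\sum_{e\ni v}y_e\le 1$. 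Thus $M^{t}\le n$, i.e.\ $\tau^*(G)=M\le n^{1/t}$, and \eqref{ineq:lovasz} finishes the proof.

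There is essentially no obstacle here: the entire content of the proof is choosing the right quantity to expand, namely $M^{t}$, and then two elementary relaxations. The only points requiring a word of care are routine — tuples $(e_1,\dots,e_t)$ with repeated entries still have a common vertex (the $t$-intersecting property being monotone in the natural non-degenerate regime $|E(G)|\ge t$), and the inequality $x^{t}\le x$ for $x\in[0,1]$. For $t=2$ this recovers the familiar fact that an intersecting hypergraph on $n$ vertices has $\tau^*\le\sqrt n$, which is a good sanity check on the computation.
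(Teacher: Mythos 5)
Your proof is correct, and it takes a genuinely different route from the paper's. Both arguments reduce the theorem to the fractional bound $\tau^*(G)\le n^{1/t}$ and then invoke Lov\'asz's inequality $\tau\le(1+\log d)\tau^*$, but the way you obtain $\tau^*(G)\le n^{1/t}$ is different. You pass to the dual: take an optimal fractional matching $(y_e)$ with total weight $M=\tau^*(G)$, expand $M^t$ as a sum over ordered $t$-tuples of edges, use the $t$-intersecting property to assign each tuple a common vertex, and then the per-vertex fractional-matching constraint $\sum_{e\ni v}y_e\le 1$ (plus $x^t\le x$ on $[0,1]$) gives $M^t\le n$ in one stroke. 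The paper instead works directly with the cover side and argues by cases: either some nonempty $S\subseteq V(G)$ has $|e\cap S|\ge|S|/n^{1/t}$ for every edge $e$, in which case the uniform weighting on $S$ is a fractional cover of total weight $n^{1/t}$; or no such $S$ exists, in which case one iteratively intersects $V(G)$ with the ``violating'' edges $e_1,\dots,e_t$ to get $|e_1\cap\dots\cap e_t|<n^{1-t/t}=1$, contradicting $t$-intersection. Your double-counting argument is shorter and avoids the case split, while the paper's version is more explicitly parallel to the earlier proof of \Cref{thm:upper-bound-middle} (which it is designed to mirror). One shared subtlety, which you rightly flag, is that both arguments apply the $t$-intersecting hypothesis to a multiset of $t$ edges (you to tuples with possible repeats, the paper to the $e_i$ which may coincide); this is harmless since for $|E(G)|\ge 1$ the $t$-intersecting property extends to any at most $t$ edges by padding, and the theorem is trivial when $E(G)=\emptyset$.
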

\begin{proof}
The proof is secretly the same as the one for \Cref{thm:upper-bound-middle}, but translated into the language of this setting. We do, however, benefit from this new setting by essentially replacing \Cref{claim:large-codegree} with \eqref{ineq:lovasz}.

\begin{claim*} Let $S\subseteq V(G)$ be a nonempty set. 
If $|e \cap S| \ge |S|/n^{1/t}$ for every edge $e \in E(G)$, then $\tau(G) \le (1+\log d(G))n^{1/t}$.
\end{claim*}
\begin{proof}
Let us assign weight $n^{1/t}/|S|$ to every vertex of $S$, and weight $0$ to every other vertex of $G$. This is a fractional cover because the total weight of every edge $e$ in $G$ is at least $|e \cap S|\cdot n^{1/t}/|S| \ge 1$. In particular, $\tau^*(G) \le n^{1/t}$, and using \eqref{ineq:lovasz} we get $\tau(G) \le (1+\log d(G))n^{1/t}$.
\end{proof}

We may now assume that for every nonempty subset $S \subseteq V(G)$, there is an edge $e \in E(G)$ such that $|e \cap S| < |S|/n^{1/t}$, as otherwise we are done by the claim. Let us denote this edge by $m(S)$. We construct a series of nested subsets of $V(G)$ as follows. We set $S_1=V(G)$, and define $e_i=m(S_i)$ and $S_{i+1}=S_i \cap e_i$ for $i=1,\dots,t$. This is possible because $S_{i+1}=e_1 \cap \dots \cap e_{i}$ and $H$ is $t$-intersecting. Moreover, it is easy to see by induction that $|S_{i+1}| < n^{1-i/t}$. But then $1 \le |e_1 \cap \dots \cap e_t|=|S_{t+1}|<1$ gives us a contradiction.
\end{proof}

Finally, let us show how \Cref{thm:upper-bound-middle} can be deduced from \Cref{thm:intersecting}.

\begin{altproof}[\Cref{thm:upper-bound-middle}]
As in the original proof, let us assume for contradiction that there is an $r$-uniform hypergraph $H$ satisfying the \cp{k}{\ell} with $\tau(H)>t$. As before, we may assume that $H$ is critical and has no isolated vertices, and thus has $|E(H)|\le \binom{r+t}{t}$ and $|V(H)|\le r\binom{r+t}{t}$. Note that for $G=\ch_\ell(H)$, we have $n=|V(G)|=|E(H)|\le \binom{r+t}{t}$ and since each edge of $G$ corresponds to an $\ell$-subset of $V(H)$ we get $d(G) \le \binom{|V(H)|}{\ell}\le |V(H)|^\ell \le r^\ell\binom{r+t}{r}^\ell \le (r+t)^{r\ell}\le (r+r\ell)^{r\ell} \le e^{2r\ell \log (r\ell)-1}$  using the assumption $t \le r \ell$. As  $\tau(H)>t$, we can apply \Cref{prop:relation} part \ref{itm:rel2} to deduce that $G$ is $\floor{t/\ell}$-intersecting. Thus \Cref{thm:intersecting} implies that $\tau(G) \le n^{1/\floor{t/\ell}} (1+\log d(G)) \le \binom{r+t}{t}^{1/\floor{t/\ell}}2r\ell \log (r\ell)$. Finally, \Cref{prop:relation} part \ref{itm:rel1} implies $k<\tau(G) \le \binom{r+t}{t}^{1/\floor{t/\ell}}2r\ell \log (r\ell)$, which is a contradiction.
\end{altproof}

\end{document}